\newtheorem{theorem}{Theorem}
\newtheorem{lemma}{Lemma}
\DeclareMathOperator*{\argmin}{arg\,min}
\newcommand{\change}{\color{red}}
\newenvironment{Algorithm}[1][tbh]%
{\begin{myalgo}[#1]
\centering
\begin{minipage}{13.5cm}
\begin{algorithm}[H]}
{\end{algorithm}
\end{minipage}
\end{myalgo}}
\newtheorem{cor}{Corollary}
\newcommand{\R}{\mathbb{R}}
\newcommand{\vct}[1]{\boldsymbol{#1}}
\newcommand{\mtx}[1]{\boldsymbol{#1}}
\newcommand{\<}{\langle}
\DeclareMathOperator*{\minimize}{\text{minimize}}
\newcommand{\va}{\vct{a}}
\newcommand{\vb}{\vct{b}}
\newcommand{\vc}{\vct{c}}
\newcommand{\ve}{\vct{e}}
\newcommand{\vg}{\vct{g}}
\newcommand{\vh}{\vct{h}}
\newcommand{\vm}{\vct{m}}
\newcommand{\vp}{\vct{p}}
\newcommand{\vq}{\vct{q}}
\newcommand{\vt}{\vct{t}}
\newcommand{\vv}{\vct{v}}
\newcommand{\vw}{\vct{w}}
\newcommand{\vx}{\vct{x}}
\newcommand{\vy}{\vct{y}}
\newcommand{\vz}{\vct{z}}
\newcommand{\vzero}{\vct{0}}
\newcommand{\vone}{\vct{1}}
\newcommand{\ind}{\boldsymbol{1}}
\newcommand{\vxi}{\vct{\xi}}
\newcommand{\mA}{\mtx{A}}
\newcommand{\mB}{\mtx{B}}
\newcommand{\mC}{\mtx{C}}
\newcommand{\mI}{\mtx{I}}
\newcommand{\mM}{\mtx{M}}
\newcommand{\mQ}{\mtx{Q}}
\newcommand{\mW}{\mtx{W}}
\newcommand{\mSigma}{\mtx{\Sigma}}
\def\l{\ell}
\newcommand{\htrue}{\vh_0}
\newcommand{\mtrue}{\vm_0}
\newcommand{\wtrue}{\vw_0}
\newcommand{\xtrue}{\vx_0}
\newcommand{\ytrue}{\vy_0}
\newcommand{\hath}{\hat{\vh}}
\newcommand{\hatm}{\hat{\vm}}
\newcommand{\relu}{\text{relu}}
\newcommand{\diag}{\text{diag}}
\newcommand{\Wf}{\mW^{(1)}}
\newcommand{\Ws}{\mW^{(2)}}
\newcommand{\Wfh}{\Wf_{1,+,\vh}}
\newcommand{\Wsh}{\Wf_{2,+,\vh}}
\newcommand{\Wih}{\Wf_{i,+,\vh}}
\newcommand{\Wdh}{\Wf_{d,+,\vh}}
\newcommand{\Wdmfh}{\Wf_{d-1,+,\vh}}
\newcommand{\Wdmfho}{\Wf_{d-1,+,\htrue}}
\newcommand{\Wix}{\Wf_{i,+,\vx}}
\newcommand{\Wdx}{\Wf_{d,+,\vx}}
\newcommand{\Wdmfx}{\Wf_{d-1,+,\vx}}
\newcommand{\Wdmfxo}{\Wf_{d-1,+,\xtrue}}
\newcommand{\Wim}{\Ws_{i,+,\vm}}
\newcommand{\Wdm}{\Ws_{d,+,\vm}}
\newcommand{\Wdmfm}{\Ws_{d-1,+,\vm}}
\newcommand{\Wiy}{\Ws_{i,+,\vy}}
\newcommand{\Wdy}{\Ws_{d,+,\vy}}
\newcommand{\Gh}{\mathcal{G}^{(1)}}
\newcommand{\Gm}{\mathcal{G}^{(2)}}
\newcommand{\prodWx}{\prod_{i=d}^1\Wix}
\newcommand{\prodWxdmf}{\prod_{i=d-1}^1\Wix}
\newcommand{\prodWh}{\prod_{i=d}^1\Wih}
\newcommand{\prodWhdmf}{\prod_{i=d-1}^1\Wih}
\newcommand{\prodWy}{\prod_{i=s}^1\Wiy}
\newcommand{\prodWysmf}{\prod_{i=s-1}^1\Wiy}
\newcommand{\prodWm}{\prod_{i=s}^1\Wim}
\newcommand{\prodWmsmf}{\prod_{i=s-1}^1\Wim}
\newcommand{\Lambdaph}{{\bf \Lambda}^{(1)}}
\newcommand{\Lambdapm}{{\bf \Lambda}^{(2)}}
\newcommand{\Lambdah}{\Lambdaph_{d,+,\vh}}
\newcommand{\Lambdaho}{\Lambdaph_{d,+,\htrue}}
\newcommand{\Lambdahdmf}{\Lambdaph_{d-1,+,\vh}}
\newcommand{\Lambdahodmf}{\Lambdaph_{d-1,+,\htrue}}
\newcommand{\Lambdam}{\Lambdapm_{s,+,\vm}}
\newcommand{\Lambdamdmf}{\Lambdapm_{s-1,+,\vm}}
\newcommand{\Lambdamo}{\Lambdapm_{s,+,\mtrue}}
\newcommand{\Lambdamodmf}{\Lambdapm_{s-1,+,\mtrue}}
\newcommand{\Lambdax}{\Lambdaph_{d,+,\vx}}
\newcommand{\Lambdaxdmf}{\Lambdaph_{d-1,+,\vx}}
\newcommand{\Lambday}{\Lambdapm_{s,+,\vy}}
\newcommand{\Lambdaydmf}{\Lambdapm_{s-1,+,\vy}}
\newcommand{\htrans}{\vh^\intercal}
\newcommand{\mtrans}{\vm^\intercal}
\newcommand{\Bh}{\mB_{+,\vh}}
\newcommand{\Bx}{\mB_{+,\vx}}
\newcommand{\Cm}{\mC_{+,\vm}}
\newcommand{\Cy}{\mC_{+,\vy}}
\newcommand{\htilde}{\tilde{\vh}}
\newcommand{\mtilde}{\tilde{\vm}}
\newcommand{\xtilde}{\tilde{\vx}}
\newcommand{\ytilde}{\tilde{\vy}}
\newcommand{\mtildet}{\tilde{\vm}^\intercal}
\newcommand{\xtildet}{\tilde{\vx}^\intercal}
\newcommand{\htruetilde}{\tilde{\vh}_0}
\newcommand{\mtruetilde}{\tilde{\vm}_0}
\newcommand{\vfhmhomo}{\vv_{(\vh,\vm),(\htrue,\mtrue)}^{(1)}}
\newcommand{\vshmhomo}{\vv_{(\vh,\vm),(\htrue,\mtrue)}^{(2)}}
\newcommand{\vfhm}{\vv_{(\vh,\vm)}^{(1)}}
\newcommand{\vshm}{\vv_{(\vh,\vm)}^{(2)}}
\newcommand{\vfhmn}{\vv_{(\vh_n,\vm_n)}^{(1)}}
\newcommand{\vshmn}{\vv_{(\vh_n,\vm_n)}^{(2)}}
\newcommand{\thvm}{\vt_{(\vh,\vm)}}
\newcommand{\tfhm}{\vt_{(\vh,\vm)}^{(1)}}
\newcommand{\tshm}{\vt_{(\vh,\vm)}^{(2)}}
\newcommand{\tfhmn}{\vt_{(\vh_n,\vm_n)}^{(1)}}
\newcommand{\tfhmdw}{\vt_{(\vh,\vm)+\delta\vw}^{(1)}}
\newcommand{\tshmdw}{\vt_{(\vh,\vm)+\delta\vw}^{(2)}}
\newcommand{\thmhomo}{\vt_{(\vh,\vm),(\htrue,\mtrue)}}
\newcommand{\tfhmhomo}{\vt_{(\vh,\vm),(\htrue,\mtrue)}^{(1)}}
\newcommand{\tshmhomo}{\vt_{(\vh,\vm),(\htrue,\mtrue)}^{(2)}}
\newcommand{\tftilde}{\tilde{\vt}^{(1)}}
\newcommand{\tstilde}{\tilde{\vt}^{(2)}}
\newcommand{\tfhhotilde}{\tftilde_{\vh,\htrue}}
\newcommand{\tsmmotilde}{\tstilde_{\vm,\mtrue}}
\newcommand{\rf}{r^{(1)}}
\newcommand{\rs}{r^{(2)}}
\newcommand{\gfhm}{\vg_{1,(\vh,\vm)}}
\newcommand{\gshm}{\vg_{2,(\vh,\vm)}}
\newcommand{\bi}{\vb_i}
\newcommand{\ci}{\vc_i}
\newcommand{\bit}{\bi^\intercal}
\newcommand{\cit}{\ci^\intercal}
\newcommand{\thetaibar}{\bar{\theta}_{i}}
\newcommand{\thetaimbar}{\bar{\theta}_{i-1}}
\newcommand{\thetaobar}{\bar{\theta}_{0}}
\newcommand{\thetajbar}{\bar{\theta}_{j}}
\newcommand{\thetaibark}{\bar{\theta}_{i}^{(k)}}
\newcommand{\thetaimbark}{\bar{\theta}_{i-1}^{(k)}}
\newcommand{\thetaobark}{\bar{\theta}_{0}^{(k)}}
\newcommand{\thetajbark}{\bar{\theta}_{j}^{(k)}}
\newcommand{\thetaabark}{\bar{\theta}_{\ak}^{(k)}}
\newcommand{\thetaichek}{\check{\theta}_{i}^{(k)}}
\newcommand{\thetaimchek}{\check{\theta}_{i-1}^{(k)}}
\newcommand{\thetaochek}{\check{\theta}_{0}^{(k)}}
\newcommand{\thetajchek}{\check{\theta}_{j}^{(k)}}
\newcommand{\thetadmfbars}{\bar{\theta}_{d-1}^{(2)}}
\newcommand{\thetadbars}{\bar{\theta}_{d}^{(2)}}
\newcommand{\thetaibarf}{\bar{\theta}_{i}^{(1)}}
\newcommand{\thetaobarf}{\bar{\theta}_{0}^{(1)}}
\newcommand{\thetadmfbarf}{\bar{\theta}_{d-1}^{(1)}}
\newcommand{\thetadbarf}{\bar{\theta}_{d}^{(1)}}
\newcommand{\thetaibars}{\bar{\theta}_{i}^{(2)}}
\newcommand{\thetaobars}{\bar{\theta}_{0}^{(2)}}
\newcommand{\thetasbars}{\bar{\theta}_{s}^{(2)}}
\newcommand{\xif}{\xi^{(1)}}
\newcommand{\xis}{\xi^{(2)}}
\newcommand{\xik}{\xi^{(k)}}
\newcommand{\zetaf}{\zeta^{(1)}}
\newcommand{\zetas}{\zeta^{(2)}}
\newcommand{\zetak}{\zeta^{(k)}}
\newcommand{\rhof}{\rho_{d}^{(1)}}
\newcommand{\rhos}{\rho_{s}^{(2)}}
\newcommand{\rhok}{\rho_{\ak}^{(k)}}
\newcommand{\rhods}{\rho_{d}^{(2)}}
\newcommand{\rhodk}{\rho_{d}^{(k)}}
\newcommand{\deltaf}{\delta^{(1)}}
\newcommand{\deltas}{\delta^{(2)}}
\newcommand{\deltak}{\delta^{(k)}}
\newcommand{\nhi}{{n_{i}}}
\newcommand{\nhimf}{{n_{i-1}}}
\newcommand{\nhzero}{{n_{0}}}
\newcommand{\nhf}{{n_{1}}}
\newcommand{\nhd}{{n_{d}}}
\newcommand{\pgi}{{p_{i}}}
\newcommand{\pgimf}{{p_{i-1}}}
\newcommand{\pgzero}{{p_{0}}}
\newcommand{\pgf}{{p_{1}}}
\newcommand{\pgs}{{p_{s}}}
\newcommand{\af}{{a^{(1)}}}
\newcommand{\as}{{a^{(2)}}}
\newcommand{\ak}{{a^{(k)}}}
\newcommand{\wit}{\vw_{i}^\intercal}
\title{Global Guarantees for Blind Demodulation with Generative Priors}
\author{
 Paul Hand\thanks{p.hand@northeastern.edu, Department of Mathematics and College of Computer and Information Science, Northeastern University}\ \ and Babhru Joshi\thanks{babhru.joshi@rice.edu, Department of Computational and Applied Mathematics, Rice University}
}
\begin{document}

\maketitle

\begin{abstract}
	We study a deep learning inspired formulation for the blind demodulation problem, which is the task of recovering two unknown vectors from their entrywise multiplication. We consider the case where the unknown vectors are in the range of known deep generative models, $\mathcal{G}^{(1)}:\mathbb{R}^n\rightarrow\mathbb{R}^\ell$ and $\mathcal{G}^{(2)}:\mathbb{R}^p\rightarrow\mathbb{R}^\ell$. In the case when the networks corresponding to the generative models are expansive, the weight matrices are random and the dimension of the unknown vectors satisfy $\ell = \Omega(n^2+p^2)$, up to log factors, we show that the empirical risk objective has a favorable landscape for optimization. That is, the objective function has a descent direction at every point outside of a small neighborhood around four hyperbolic curves. We also characterize the local maximizers of the empirical risk objective and, hence, show that there does not exist any other stationary points outside of these neighborhood around four hyperbolic curves and the set of local maximizers. We also implement a gradient descent scheme inspired by the geometry of the landscape of the objective function. In order to converge to a global minimizer, this gradient descent scheme exploits the fact that exactly one of the hyperbolic curve corresponds to the global minimizer, and thus points near this hyperbolic curve have a lower objective value than points close to the other spurious hyperbolic curves. We show that this gradient descent scheme can effectively remove distortions synthetically introduced to the MNIST dataset.
	 
\end{abstract}

\section{Introduction}
We study the problem of recovering two unknown vectors $\xtrue \in \R^\l$ and $\wtrue \in \R^\l$ from observations $\ytrue \in \R^\l$ of the form
 \begin{equation}\label{eq:prob_statement}
 	\ytrue = \wtrue\odot\xtrue,
 \end{equation}
where $\odot$ is entrywise multiplication. This bilinear inverse problem (BIP) is known as the blind demodulation problem. BIPs, in general, have been extensively studied and include problems such as blind deconvolution/demodulation \citep{ahmed2012blind, stockham1975blind, kundur1996blind,aghasi2016sweep,aghasi2017branchHull}, phase retrieval \citep{fienup1982phase, candes2012solving, candes2013phaselift}, dictionary learning \citep{tosic2011dictionary}, matrix factorization \citep{hoyer2004non, lee2001algorithms}, and self-calibration \citep{ling2015self}. A significant challenge of BIP is the ambiguity of solutions. These ambiguities are challenging because they cause the set of solutions to be non-convex.

A common ambiguity, also shared by the BIP in \eqref{eq:prob_statement}, is the scaling ambiguity. That is any member of the set $\{c\wtrue,\tfrac{1}{c}\xtrue\}$ for $c \neq 0$ solves \eqref{eq:prob_statement}. In addition to the scaling ambiguity, this BIP is difficult to solve because the solutions are non-unique, even when excluding the scaling ambiguity. For example, $(\wtrue, \xtrue)$ and $(\vone , \wtrue \odot \xtrue)$ both satisfy \eqref{eq:prob_statement}. This structural ambiguity can be solved by assuming a prior model of the unknown vectors. In past works relating to blind deconvolution and blind demodulation \citep{ahmed2012blind, aghasi2017branchHull}, this structural ambiguity issue was addressed by assuming a subspace prior, i.e. the unknown signals belong to known subspaces. Additionally, in many applications, the signals are compressible or sparse with respect to a basis like a wavelet basis or the Discrete Cosine Transform basis, which can address this structural ambiguity issue. 

In contrast to subspace and sparsity priors, we address the structural ambiguity issue by assuming the signals $\wtrue$ and $\xtrue$ belong to the range of known generative models $\Gh:\R^n\rightarrow\R^\l$ and $\Gm: \R^p\rightarrow\R^\l$, respectively.  That is, we assume that $\wtrue = \Gh(\htrue)$ for some $\htrue \in \R^n$ and $\xtrue = \Gm(\mtrue)$ for some $\mtrue \in \R^p$. So, to recover the unknown vectors $\wtrue$ and $\xtrue$, we first recover the latent code variables $\htrue$ and $\mtrue$ and then apply $\Gh$ and $\Gm$ on $\htrue$ and $\mtrue$, respectively. Thus, the blind demodulation problem under generative prior we study is:
\begin{align*}
	\text{find } \vh\in\R^n \text{ and } \vm \in \R^p \text{, up to the scaling ambiguity, such that } \ytrue = \Gh(\vh)\odot\Gm(\vm).
\end{align*}

In recent years, advances in generative modeling of images \citep{karras2017progressiveGAN} has significantly increased the scope of using a generative model as a prior in inverse problems. Generative models are now used in speech synthesis \citep{dieleman2016wavenet}, image in-painting \citep{Iizuka2017image}, image-to-image translation \citep{Zhu2017translation}, superresolution \citep{Casper2017Superresolution}, compressed sensing \citep{bora2017DCS, Lohit2017DCS}, blind deconvolution \citep{Asim2018DBD}, blind ptychography \citep{Shamshad2018DPty}, and in many more fields. Most of these papers empirically show that using generative model as a prior to solve inverse problems outperform classical methods. For example, in compressed sensing, optimization over the latent code space to recover images from its compressive measurements have been empirically shown to succeed with 10x fewer measurements than classical sparsity based methods \citep{bora2017DCS}. Similarly, the authors of \cite{Asim2018DBD} empirically show that using generative priors in image debluring inverse problem provide a very effective regularization that produce sharp deblurred images from very blurry images.  

In the present paper, we use generative priors to solve the blind demodulation problem \eqref{eq:prob_statement}. The generative model we consider is the an expansive, fully connected, feed forward neural network with Rectified Linear Unit (ReLU) activation functions and no bias terms. Our main contribution is we show that the empirical risk objective function, for a sufficiently expansive random generative model, has a landscape favorable for gradient based methods to converge to a global minimizer. Our result implies that if the dimension of the unknown signals satisfy $\l = \Omega(n^2 + p^2)$, up to log factors, then the landscape is favorable. In comparison, classical sparsity based methods for similar BIPs like sparse blind demodulation \citep{Bresler2016sparse} and sparse phase retrieval \citep{li2013sparse} showed that exact recovery of the unknown signals is possible if the number of measurements scale quadratically, up to a log factor, w.r.t. the sparsity level of the signals. While we show a similar scaling of the number of measurements w.r.t. the latent code dimension, the latent code dimension can be smaller than the sparsity level for the same signal, and thus recovering the signal using generative prior would require less number of measurements.

\subsection{Main results}
We study the problem of recovering two unknown signals $\wtrue$ and $\xtrue$ in $\R^{\l}$ from observations $\ytrue =\wtrue\odot\xtrue$, where $\odot$ denotes entrywise product. We assume, as a prior, that the vectors $\wtrue$ and $\xtrue$ belong to the range of $d$-layer and $s$-layer neural networks $\Gh:\R^n\rightarrow\R^\l$ and $\Gm:\R^p\rightarrow\R^\l$, respectively. The task of recovering $\wtrue$ and $\xtrue$ is reduced to finding the latent codes $\htrue \in \R^{n}$ and $\mtrue \in \R^{p}$ such that $\Gh(\htrue) = \wtrue$ and $\Gm(\mtrue)=\xtrue$. More precisely, we consider the generative networks modeled by $\Gh(\vh) = \relu(\Wf_d\dots\relu(\Wf_2\relu(\Wf_1 \vh))\dots)$ and $\Gm(\vm) = \relu(\Ws_s\dots\relu(\Ws_2\relu(\Ws_1 \vm))\allowbreak\dots)$, where $\relu(\vx)= \max(\vx,\vzero)$ applies entrywise, $\Wf_{i} \in \R^{\nhi \times \nhimf }$ for $i = 1,\dots,d$ with $n = \nhzero<\nhf<\dots<\nhd=\l$, and $\Ws_{i} \in \R^{\pgi\times \pgimf}$ for $i = 1,\dots,s$ with $p = \pgzero<\pgf<\dots<\pgs=\l$.  The blind demodulation problem we consider is:
\begin{align*}
\text{Let: }& \ytrue \in \R^{\l}, \htrue \in \R^{n}, \mtrue\in\R^{p} \text{ such that }
\ytrue = \Gh(\htrue)\odot \Gm(\mtrue),\\
\text{Given: }& \Gh, \Gm \text{ and  measurements } \ytrue,\\
\text{Find: }& \htrue \text{ and } \mtrue \text{, up to the scaling ambiguity.} 
\end{align*}
In order to recover $\htrue$ and $\mtrue$, up to the scaling ambiguity, we consider the following empirical risk minimization program:
\begin{align}\label{eq:ELF}
	\minimize_{\vh\in\R^n,\vm\in\R^p}f(\vh,\vm):=\frac{1}{2}\left\|\Gh\left(\htrue\right) \odot \Gm\left(\mtrue\right)-\Gh\left(\vh\right) \odot \Gm\left(\vm\right)\right\|_2^2.
\end{align}
\vspace{-.1in}
\begin{figure}[H]
\floatsetup[subfigure]{captionskip=-10pt} 
\ffigbox[]{\hspace{-0pt}
    \begin{subfloatrow}[2]
      \ffigbox[]{%
      \begin{overpic}[width=0.4\textwidth,height=.35\textwidth,tics=1]{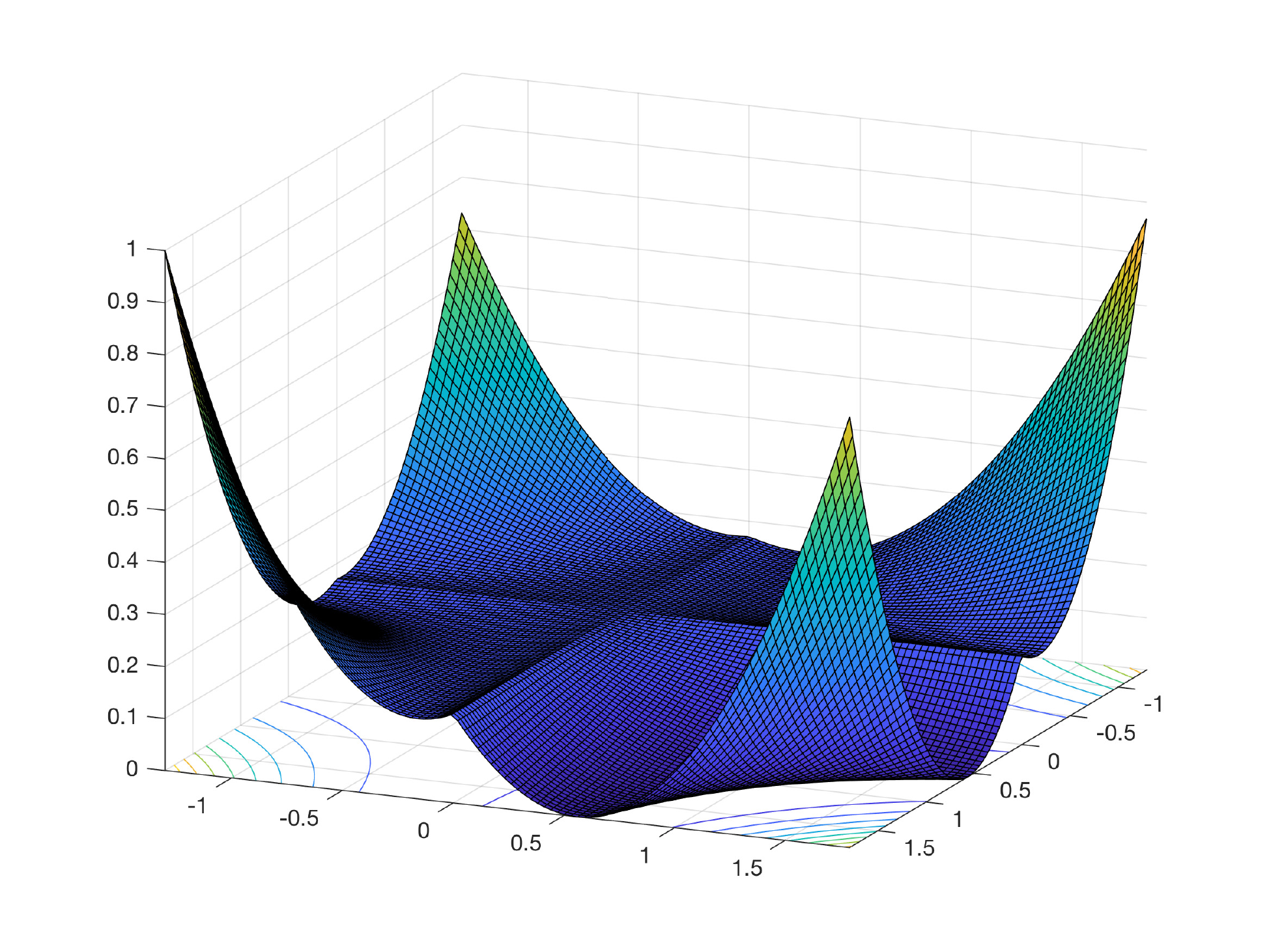}
   	  \end{overpic}
        }{\caption{Landscape of the empirical risk function.}\label{fig:surface_landscape}}
      \hspace{\fill}\ffigbox[]{\raisebox{.8cm}{
        \begin{overpic}[width=0.25\textwidth,height=0.25\textwidth,tics=1]{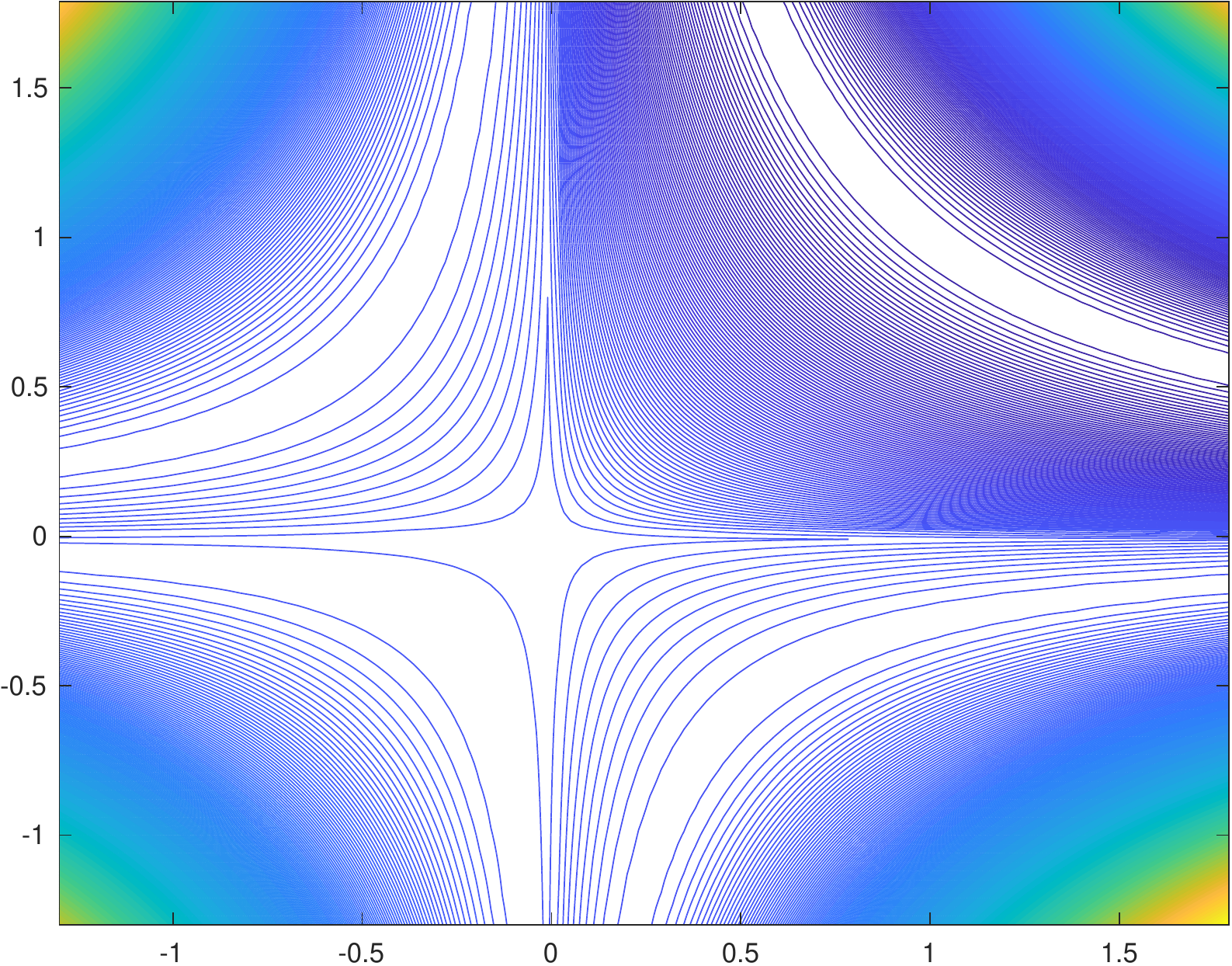}
	  \end{overpic}}}
	  {\caption{Note the four hyperbolic branches visible.}\label{fig:contour_landscape}}
    \end{subfloatrow}
}{\caption{Plots showing the landscape of the objective function with $\htrue = 1$ and $\mtrue = 1$.}

}
\end{figure}
\vspace{-10pt}

Figures \ref{fig:surface_landscape} and \ref{fig:contour_landscape} show the landscape of the objective function in the case when $\htrue =\mtrue = 1$, $s = d = 2$, the networks are expansive, and the weight matrices $\Wf_i$ and $\Ws_i$ contain i.i.d. Gaussian entries. Clearly, the objective function in \eqref{eq:ELF} is non-convex and, as a result, there does not exist a prior guarantee that gradient based methods will converge to a global minima. Additionally, the objective function does not contain any regularizer which are generally be used to resolve the scaling ambiguity, and thus every point in $\left\{(c\htrue, \tfrac{1}{c}\mtrue)|c>0\right\}$ is a global optima of \eqref{eq:ELF}. Nonetheless, we show that under certain conditions on the networks, the minimizers of \eqref{eq:ELF} are in the neighborhood of four hyperbolic curves, one of which is the hyperbolic curve containing the global minimizers.

In order to define these hyperbolic neighborhoods, let 
\begin{equation}\label{eq:recovery_area_hyper}
	\mathcal{A}_{\epsilon,(\htilde,\mtilde)} = \left\{(\vh,\vm)\in\R^{n\times p}\left|\exists\ c>0 \textit{ s.t. } \left\|(\vh,\vm)-\left(c\htilde,\frac{1}{c}\mtilde\right)\right\|_2\leq \epsilon\left\|\left(c\htilde,\frac{1}{c}\mtilde\right)\right\|_2 \right.\right\},
\end{equation}
where $(\htilde,\mtilde)\in\R^{n\times p}$ is fixed. This set is an $\epsilon$-neighborhood of the hyperbolic set $\{(c\htilde,\tfrac{1}{c}\mtilde)|c>0\}$. We show that the minimizers of \eqref{eq:ELF} are contained in the four hyperbolic sets given by $\mathcal{A}_{\epsilon,(\htrue,\mtrue)}$, $\mathcal{A}_{\epsilon,(-\rhof \htrue,\mtrue)}$, $\mathcal{A}_{\epsilon,(\htrue,-\rhos\mtrue)}$, and $\mathcal{A}_{\epsilon,(-\rhof\htrue,-\rhos\mtrue)}$. Here, $\epsilon$ depends on the expansivity and number of layers in the networks, and both $\rhof$ and $\rhos$ are positive constants close to 1. We also show that the points in the set $\{(\vh,\vzero)|\vh \in \R^n\} \cup \{(\vzero,\vm)|\vm \in \R^p\}$ are local maximizers. This result holds for networks with the following assumptions:
\begin{itemize}
\item[A1.] The weight matrices are random.
\item[A2.] The weight matrices of inner layers satisfy $\nhi = \Omega(\nhimf)$, up to a log factor, for $i = 1,\dots,d-1$ and $\pgi = \Omega(\pgimf)$, up to a log factor, for $i = 1,\dots,s-1$. 
\item[A3.] The weight matrices of the last layer for each generator satisfy $\l = \Omega(n_{d-1}^2+p_{s-1}^2)$, up to log factors.
\end{itemize}
Figures \ref{fig:surface_landscape} and \ref{fig:contour_landscape} show the landscape of the objective function and corroborate our findings.

\begin{theorem}[Informal]\label{thm:main_random_informal}
Let 
\begin{equation*}
\mathcal{A} = \mathcal{A}_{\epsilon,(\htrue,\mtrue)}\cup\mathcal{A}_{\epsilon,(-\rhof \htrue,\mtrue)}\cup\mathcal{A}_{\epsilon,(\htrue,-\rhos\mtrue)}\cup\mathcal{A}_{\epsilon,(-\rhof\htrue,-\rhos\mtrue)},
\end{equation*}
where $\epsilon>0$ depends on the expansivity of our networks and $\rhof,\rhos\rightarrow 1$ as $d,s\rightarrow\infty$, respectively. Suppose the networks are sufficiently expansive such that the number of neurons in the inner layers and the last layers satisfy assumptions A2 and A3, respectively. Then there exist a descent direction, given by one of the one-sided partial derivative of the objective function in \eqref{eq:ELF}, for every $(\vh,\vm)\notin \mathcal{A}\cup\{(\vh,\vzero)|\vh\in\R^n\}\cup\{(\vzero,\vm)|\vm\in\R^p\}$ with high probability. In addition, the elements of the set $\{(\vh,\vzero)|\vh\in\R^n\}\cup\{(\vzero,\vm)|\vm\in\R^p\}$ are local maximizers.
\end{theorem}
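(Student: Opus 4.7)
The plan is to analyze the one-sided partial derivatives of $f$ through the piecewise-linear structure of the ReLU networks, reduce them to deterministic vector fields via concentration of the Gaussian weights, and then classify the zero set of these deterministic fields geometrically. Away from a measure-zero set of activation boundaries, $\Gh$ and $\Gm$ are locally linear: $\Gh(\vh)=(\prodWh)\vh$ and $\Gm(\vm)=(\prodWm)\vm$, where $\Wih$ and $\Wim$ denote the weight matrices with rows zeroed according to the activation patterns at $\vh$ and $\vm$. Differentiating $f$ directly yields
\begin{equation*}
\partial_\vh f(\vh,\vm) = \bigl(\prodWh\bigr)^\intercal\bigl[\Gm(\vm)\odot\Gm(\vm)\odot \Gh(\vh) - \Gm(\vm)\odot \Gm(\mtrue)\odot \Gh(\htrue)\bigr],
\end{equation*}
and symmetrically for $\partial_\vm f$; at kink points the same formulas describe the relevant one-sided directional derivatives for an appropriate choice of active set. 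The theorem is then reduced to exhibiting one of these one-sided partials as a descent direction outside $\mathcal{A}$.

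Under A1--A2, the Weight Distribution Condition (WDC) from the deep-generative-prior literature applies: for each fixed pair $\htrue,\vh$, the product $(\prodWhtrue)^\intercal\prodWh$ concentrates uniformly around an explicit deterministic matrix depending only on the angle $\angle(\htrue,\vh)$, and analogously for the $\Gm$ branch. Assumption A3 then supplies exactly the measurement count needed to upgrade this pointwise concentration to a uniform statement on the bilinear last-layer products appearing in $\partial_\vh f$ and $\partial_\vm f$, through a covering argument on the unit spheres of $\R^n\times\R^p$. After substituting these approximations, the appropriately rescaled partials concentrate around deterministic vector fields $\tilde{\vp}_{\vh,\vm}\in\R^n$ and $\tilde{\vq}_{\vh,\vm}\in\R^p$, where $\tilde{\vp}_{\vh,\vm}$ is a linear combination of $\vh$ and $\htrue$ and $\tilde{\vq}_{\vh,\vm}$ is a linear combination of $\vm$ and $\mtrue$, with scalar coefficients that are smooth functions of $\angle(\vh,\htrue)$, $\angle(\vm,\mtrue)$ and the four magnitudes $\|\vh\|,\|\htrue\|,\|\vm\|,\|\mtrue\|$.

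The geometric core of the proof is to solve $\tilde{\vp}_{\vh,\vm}=\vzero$ and $\tilde{\vq}_{\vh,\vm}=\vzero$ jointly. Because each equation is a two-term linear combination, the joint system decouples into an angle system whose nontrivial solutions are exactly four hyperbolic families: the true branch $\{(c\htrue,c^{-1}\mtrue):c>0\}$ and the three spurious branches $\{(-\rhof c\htrue,c^{-1}\mtrue)\}$, $\{(c\htrue,-\rhos c^{-1}\mtrue)\}$, $\{(-\rhof c\htrue,-\rhos c^{-1}\mtrue)\}$. The scalars $\rhof,\rhos$ arise as the non-identity fixed points of the angle recursion induced by $d$ (respectively $s$) iterated ReLU layers, and converge to $1$ as $d,s\to\infty$. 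A quantitative bound $\|\tilde{\vp}_{\vh,\vm}\|+\|\tilde{\vq}_{\vh,\vm}\|\geq c_0\|(\vh,\vm)\|$ outside an $\epsilon$-neighborhood of these four branches, combined with the uniform concentration, then yields the descent-direction claim at every $(\vh,\vm)\notin\mathcal{A}$. For the axis claim, the bias-free ReLU architecture gives $\Gh(\vzero)=\vzero=\Gm(\vzero)$, so $f\equiv\tfrac12\|\ytrue\|^2$ identically on each axis; expanding
\begin{equation*}
f(\vh+\delta\vh,\delta\vm)=\tfrac12\|\ytrue\|^2-\langle \ytrue,\ \Gh(\vh+\delta\vh)\odot \Gm(\delta\vm)\rangle+\tfrac12\|\Gh(\vh+\delta\vh)\odot \Gm(\delta\vm)\|^2,
\end{equation*}
the cross term is nonnegative by entrywise nonnegativity of ReLU outputs and, via WDC concentration, is generically of linear order in $\|\delta\vm\|$, dominating the $O(\|\delta\vm\|^2)$ quadratic term, so $f$ strictly decreases off the axis for sufficiently small perturbations.

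The hardest part of the argument is the geometric analysis of the zero set of $(\tilde{\vp},\tilde{\vq})$ and the accompanying quantitative lower bound away from it. The angle-dependent coefficients of $\tilde{\vp}_{\vh,\vm}$ and $\tilde{\vq}_{\vh,\vm}$ are non-elementary functions obtained by iterating the ReLU angle recursion $d$ and $s$ times, so identifying their common zeros with exactly the four claimed hyperbolic branches (and ruling out further spurious solutions) requires a detailed case analysis on the signs of $\langle\vh,\htrue\rangle$ and $\langle\vm,\mtrue\rangle$, together with a quantitative strict monotonicity of the angle recursion that simultaneously controls the deviation of $\rhof,\rhos$ from $1$ and the radius of the neighborhoods comprising $\mathcal{A}$ in terms of the depths $d$ and $s$.
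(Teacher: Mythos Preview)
Your proposal is correct and follows essentially the same strategy as the paper: concentrate the one-sided partials around deterministic vector fields via WDC-type conditions (the paper introduces a \emph{joint-WDC} on the last-layer pair $(\Wf_d,\Ws_s)$, which is exactly the bilinear uniform concentration you invoke A3 for), then classify the common approximate zero set of those fields as the four hyperbolic branches, handling the axes $\mathcal{K}$ separately.

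Two small remarks. First, your quantitative lower bound ``$\|\tilde{\vp}_{\vh,\vm}\|+\|\tilde{\vq}_{\vh,\vm}\|\geq c_0\|(\vh,\vm)\|$'' has the wrong homogeneity: because of the bilinear structure, the correct scale for $\tilde{\vp}$ is $\|\vm\|\cdot\max(\|\vh\|\|\vm\|,\|\htrue\|\|\mtrue\|)$ (and symmetrically for $\tilde{\vq}$); the paper phrases the contrapositive as ``if both normalized fields are small in this scale, then $(\vh,\vm)$ lies in one of the four neighborhoods.'' Second, for the axis claim your nonnegativity observation---that $\ytrue$ and $\Gh(\cdot)\odot\Gm(\cdot)$ are both entrywise nonnegative, hence the cross term is $\leq 0$---is more elementary than what the paper actually does: the paper instead bounds the directional derivative at $(\vzero,\vm)$ via the joint-WDC and the angle recursion, obtaining a quantitative positive lower bound on $-D_{(\vx,\vy)}f$. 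Your argument already yields the stated weak inequality $D_{(\vx,\vy)}f\leq 0$; the paper's route additionally gives strictness in nondegenerate directions, at the cost of invoking joint-WDC.
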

Our main result states that the objective function in \eqref{eq:ELF} does not have any spurious minimizers outside of the four hyperbolic neighborhoods. Thus, a gradient descent algorithm will converge to a point inside the four neighborhoods, one of which contains the global minimizers of \eqref{eq:ELF}. However, it may not guarantee convergence to a global minimizer and it may not resolve the inherent scaling ambiguity present in the problem. So, in order to converge to a global minimizer, we implement a gradient descent scheme that exploits the landscape of the objective function. That is, we exploit the fact that points near the hyperbolic curve corresponding to the global minimizer have a lower objective value than points that are close to the remaining three spurious hyperbolic curves. Second, in order to resolve the scaling ambiguity, we promote solutions that have equal $\l_2$ norm by normalizing the estimates in each iteration of the gradient descent scheme (See Section \ref{sec:algorithm}).

Theorem \ref{thm:main_random_informal} also provides a global guarantee of the landscape of the objective function in \eqref{eq:ELF} if the dimension of the unknown signals scale quadratically w.r.t. to the dimension of the latent codes, i.e. $\l = \Omega(n^2+p^2)$, up to log factors. Our result, which we get by enforcing generative priors may enjoy better sample complexity than classical priors like sparsity because: i) the same signals can have a latent code dimension that is smaller than its sparsity level w.r.t. to a particular basis, and ii) existing recovery guarantee of unstructured signals require number of measurements that scale quadratically with the sparsity level. Thus, our result may be less limiting in terms of sample complexity.

\subsection{Prior work on problems related to blind demodulation}

A common approach of solving the BIP in \eqref{eq:prob_statement} is to assume a subspace or sparsity prior on the unknown vectors. In these cases the unknown vectors $\wtrue$ and $\xtrue$ are assumed to be in the range of known matrices $\mB \in \R^{\l\times n}$ and $\mC \in \R^{\l\times p}$, respectively. In \cite{ahmed2012blind}, the authors assumed a subspace prior and cast the BIP as a linear low rank matrix recovery problem. They introduced a semidefinite program based on nuclear norm minimization to recover the unknown matrix. For the case where the rows of $\mB$ and $\mC$ are Fourier and Gaussian vectors, respectively, they provide a recovery guarantee that depend on the number of measurements as $\l = \Omega(n+p)$, up to log factors. However, because this method operates in the space of matrices, it is computationally prohibitively expensive. Another limitation of the lifted approach is that recovering a low rank and sparse matrix efficiently from linear observation of the matrix has been challenging. Recently, \cite{Bresler2016sparse} provided a recovery guarantee with near optimal sample complexity for the low rank and sparse matrix recovery problem using an alternating minimization method for a class of signals that satisfy a peakiness condition. However, for general signals the same work established a recovery result for the case where the number of measurements scale quadratically with the sparsity level.

In order to address the computational cost of working in the lifted case, a recent theme has been to introduce convex and non-convex programs that work in the natural parameter space. For example, in \cite{bahmani2016phase, goldstein2016phasemax}, the authors introduced PhaseMax, which is a convex program for phase retrieval that is based on finding  a simple convex relaxation via the convex hull of the feasibility set. The authors showed that PhaseMax enjoys rigorous recovery guarantee if a good anchor is available. This formulation was extended to the sparse case in \cite{hand2016compressedPhaseMax}, where the authors considered SparsePhaseMax and provided a recovery guarantee with optimal sample complexity. The idea of formulating a convex program using a simple convex relaxation via the convex hull of the feasibility set was used in the blind demodulation problem as well \citep{aghasi2017branchHull, aghasi2018L1BH}. In particular, \cite{aghasi2018L1BH} introduced a convex program in the natural parameter space for the sparse blind demodulation problem in the case where the sign of the unknown signals are known. Like in \cite{Bresler2016sparse}, the authors in \cite{aghasi2017branchHull} provide a recovery guarantee with optimal sample complexity for a class of signals. However, the result does not extend to signals with no constraints. Other approaches that operate in the natural parameter space are methods based on Wirtinger Flow. For example, in \cite{candes2014phase, wang2016solving, li2016rapid}, the authors use Wirtinger Flow and its variants to solve the phase retrieval and the blind deconvolution problem. These methods are non-convex and require a good initialization to converge to a global solution. However, they are simple to solve and enjoys rigorous recovery guarantees.

\subsection{Other related work}
In this paper, we consider the blind demodulation problem with the unknown signals assumed to be in the range of known generative models. Our work is motivated by experimental results in deep compressed sensing and deep blind deconvolution presented in \cite{bora2017DCS, Asim2018DBD} and theoretical work in deep compressed sensing presented in \cite{Hand2017DCS}. In \cite{bora2017DCS}, the authors consider the compressed sensing problem where, instead of a sparsity prior, a generative prior is considered. They used an empirical risk optimization program over the latent code space to recover images and empirical showed that their method succeeds with 10x fewer measurements than previous sparsity based methods. Following the empirical successes of deep compressed sensing, the authors in \cite{Hand2017DCS} provided a theoretical understanding for these successes by characterizing the landscape of the empirical risk objective function. In the random case with the layers of the generative model sufficiently expansive, they showed that every point outside of a small neighborhood around the true solution and a negative multiple of the true solution has a descent direction with high probability. Another instance where generative model currently outperforms sparsity based methods is in sparse phase retrieval \cite{hand2018DPR}. In sparse phase retrieval, current algorithms that enjoy a provable recovery guarantee of an unknown $n$-dimensional  $k$-sparse signal require at least $O(k^2\log n)$ measurements;  whereas, when assuming the unknown signal is an output of a known $d$-layer generator $\mathcal{G}:\R^k\rightarrow\R^n$, the authors in \cite{hand2018DPR} showed that, under favorable conditions on the generator and with at least $O(kd^2\log n)$ measurements, the empirical risk objective enjoys a favorable landscape. 

Similarly, in \cite{Asim2018DBD}, the authors consider the blind deconvolution problem where a generative prior over the unknown signal is considered. They empirically showed that using generative priors in the image deblurring inverse problem provide a very effective regularization that produce sharp deblurred images from very blurry images. The algorithm used to recovery these deblurred images is an alternating minimization approach which solves the empirical risk minimization with $\l_2$ regularization on the unknown signals. The $\l_2$ regularization promotes solution with least $\l_2$ norm and resolves the scaling ambiguity present in the blind deconvolution problem. We consider a related problem, namely the blind demodulation problem with a generative prior on the unknown signals, and show that under certain conditions on the generators the empirical risk objective has a favorable landscape.
\vspace{-.08cm}
\subsection{Notations}
Vectors and matrices are written with boldface, while scalars and entries of vectors are written in plain font.  We write $\boldsymbol{1}$ as the vector of all ones with dimensionality appropriate for the context. Let $\mathbb{S}^{n-1}$ be the unit sphere in $\R^n$. We write $\mI_n$ as the $n\times n$ identity matrix. For $\vx \in \R^K$ and $\vy \in \R^N$, $(\vx,\vy)$ is the corresponding vector in $\R^K \times \R^N$. Let $\relu(\vx) = \max(\vx,\vzero)$ apply entrywise for $\vx \in \R^n$. Let $\diag(\mW\vx>0)$ be the diagonal matrix that is $1$ in the $(i,i)$ entry if $(\mW\vx)_i>0$ and $0$ otherwise. Let $\mA\preceq\mB$ mean that $\mB-\mA$ is a positive semidefinite matrix. We will write $\gamma = O(\delta)$ to mean that there exists a positive constant $C$ such that $\gamma \leq C\delta$, where $\gamma$ is understood to be positive. Similarly we will write $c = \Omega(\delta)$ to mean that there exists a positive constant $C$ such that $c\geq C\delta$. When we say that a constant depends polynomially on $\epsilon^{-1}$, that means that it is at most $C\epsilon^{-k}$ for some positive $C$ and positive integer $k$. For notational convenience, we will write $\va  = \vb + O_1(\epsilon)$ if $\|\va -\vb\|\leq \epsilon$, where the norm is understood to be absolute value for scalars, the $\l_2$ norm for vectors, and the spectral norm for matrices.	

\section{Algorithm}\label{sec:algorithm}
In this section, we propose a gradient descent scheme that solves \eqref{eq:ELF}. The gradient descent scheme exploits the global geometry present in the landscape of the objective function in \eqref{eq:ELF} and avoids regions containing spurious minimizers. The gradient descent scheme is based on two observations. The first observation is that the minimizers of \eqref{eq:ELF} are close to four hyperbolic curves given by $\{(c\htrue,\tfrac{1}{c}\mtrue)|c>0\}$, $\{(-c\rhof\htrue,\tfrac{1}{c}\mtrue)|c>0\}$, $\{(c\htrue,-\tfrac{\rhods}{c}\mtrue)|c>0\}$,  and $\{(-c\rhof\htrue,-\tfrac{\rhos}{c}\mtrue)|c>0\}$, where $\rhof$ and $\rhos$ are close to 1. The second observation is that $f(c\htrue,\tfrac{1}{c}\mtrue)$ is less than $f(-c\htrue,\tfrac{1}{c}\mtrue)$, $f(c\htrue,-\tfrac{1}{c}\mtrue)$, and $f(-c\htrue,-\tfrac{1}{c}\mtrue)$ for any $c>0$. This is because the curve $\{(c\htrue,\tfrac{1}{c}\mtrue)|c>0\}$ corresponds to the global minimizer of \eqref{eq:ELF}.

We now introduce some quantities which are useful in stating the gradient descent algorithm.  For any $\vh \in \R^n$ and $\mW \in \R^{l\times n}$, define $\mW_{+,\vh} = \diag(\mW\vh>0)\mW$. That is, $\mW_{+,\vh}$ zeros out the rows of $\mW$ that do not have a positive inner product with $\vh$ and keeps the remaining rows. We will extend the definition of $\mW_{+,\vh}$ to each layer of weights $\Wf_{i}$ in our neural network. For $\Wf_{i}\in\R^{n_1\times n}$ and $\vh \in \R^{n}$, define $\Wfh := (\Wf_{1})_{+,\vh} = \diag(\Wf_{1}\vh>0)\Wf_1$. For each layer i > 1, define 
\[
\Wih = \diag(\Wf_{i}\Wf_{i-1,+,\vh}\dots\Wsh\Wfh\vh>0)\Wf_{i}.
\]
Lastly, define $\Lambdah := \prod_{i=d}^1\Wih$. Using the above notation, $\Gh(\vh)$ can be compactly written as $\Lambdah \vh$.  Similarly, we may write $\Gm(\vm)$ compactly as $\Lambdam\vm$.  

The gradient descent scheme is an alternating descent direction algorithm. We first pick an initial iterate $(\vh_1,\vm_1)$ such that $\vh_1\neq \vzero$ and $\vm_1 \neq \vzero$. At each iteration $i = 1, 2,\dots$, we first compare the objective value at $(\vh_1,\vm_1)$, $(-\vh_1,\vm_1)$, $(\vh_1,-\vm_1)$, and $(-\vh_1,-\vm_1)$ and reset $(\vh_1,\vm_1)$ to be the point with least objective value. Second we descend along a direction. We compute the descent direction $\tilde{\vg}_{1,(\vh,\vm)}$, given by the partial derivative of $f$ in \eqref{eq:ELF} w.r.t. $\vh$,
\begin{align*}
\tilde{\vg}_{1,(\vh,\vm)}:= {\Lambdah}^\intercal\left(\diag(\Lambdam\vm)^2\Lambdah\vh - \diag(\Lambdam\vm\odot\Lambdamo\mtrue)\Lambdaho\htrue\right)
\end{align*}
and take a step along this direction. Next, we compute the descent direction $\tilde{\vg}_{2,(\vh,\vm)}$, given by the partial derivative of $f$ in w.r.t. $\vm$,
\begin{align*}
\tilde{\vg}_{2,(\vh,\vm)}:={\Lambdam}^\intercal\left(\diag(\Lambdah\vh)^2\Lambdam\vm - \diag(\Lambdah\vh\odot\Lambdaho\htrue)\Lambdamo\mtrue\right).
\end{align*}
and again take a step along this direction. Lastly, we normalize the iterate so that at each iteration $i$ $\|\vh_i\|_2 = \|\vm_i\|_2$. We repeat this process until convergence. Algorithm \ref{alg:gradient_descent} outlines this process.
\vspace{-6mm}
\begin{Algorithm}[H]
\caption{Alternating descent algorithm for \eqref{eq:ELF}}
\label{alg:gradient_descent}
\begin{algorithmic}[1]
\Statex Input: Weight matrices, $\Wf_{i}$ and $\Ws_{i}$, observation $\ytrue$ and step size $\eta>0$.
\Statex Output: An estimate of a global minimizer of \eqref{eq:ELF} 
\State Choose an arbitrary point $(\vh_1,\vm_1)$ such that $\vh_1\neq \vzero$ and $\vm_1 \neq \vzero$
\For {$i = 1, 2,\dots$}:
\State $(\vh_i,\vm_i) \leftarrow \argmin(f(\vh_i,\vm_i),f(-\vh_i,\vm_i),f(\vh_i,-\vm_i),f(-\vh_i,-\vm_i))$
\State $h_{i+1} \leftarrow \vh_i - \eta \tilde{\vg}_{1,(\vh_i,\vm_i)}$,\hspace{.3cm} $m_{i+1} \leftarrow \vm_i - \eta \tilde{\vg}_{2,(\vh_{i+1},\vm_i)}$
\State $c \leftarrow \sqrt{\|\vh_{i+1}\|_2/\|\vm_{i+1}\|_2}$,\hspace{.3cm}$\vh_{i+1} \leftarrow \vh_{i+1}/c$,\hspace{.3cm}$\ \vm_{i+1} \leftarrow \vm_{i+1}\cdot c$ 
\EndFor
\end{algorithmic}
\end{Algorithm}
\vspace{-.6mm}

\section{Proof Outline}
We now present our main results which states that the objective function has a descent direction at every point outside of four hyperbolic regions. In order to state these directions, we first note that the partial derivatives of $f$ at a differentiable point $(\vh,\vm)$ are
\begin{align*}
	\nabla_{\vh} f(\vh,\vm) =\tilde{\vg}_{1,(\vh,\vm)} \text{ and } \nabla_{\vm} f(\vh,\vm) =\tilde{\vg}_{2,(\vh,\vm)}.
\end{align*}
The function $f$ is not differentiable everywhere because of the behavior of the RELU activation function in the neural network. However, since $\Gh$ and $\Gm$ are piecewise linear, $f$ is differentiable at $(\vh,\vm)+\delta\vw$ for all $(\vh,\vm)$ and $\vw$ and sufficiently small $\delta$. The directions we consider are $g_{1,(\vh,\vm)} \in \R^{n+ p}$ and $g_{2,(\vh,\vm)} \in \R^{n+p}$, where
\begin{align}\label{eq:directions}
\vg_{1,(\vh,\vm)}=\left[\begin{array}{c}\lim\limits_{\delta\rightarrow 0^+} \nabla_{\vh} f((\vh,\vm)+\delta \vw)\\ \vzero \end{array}\right], \  \vg_{2,(\vh,\vm)}=\left[\begin{array}{c} \vzero\\ \lim\limits_{\delta\rightarrow 0^+}\nabla_{\vm} f((\vh,\vm)+\delta \vw)  \end{array}\right], \text{ and}
\end{align}
$\vw$ is fixed. Let $D_{\vg}f(\vh,\vm)$ be the unnormalized one-sided directional derivative of $f(\vh,\vm)$ in the direction of $\vg$: $D_{\vg}f(\vh,\vm) = \lim_{t\rightarrow 0^+}\frac{f((\vh,\vm)+t\vg)-f(\vh,\vm)}{t}$. 
\begin{theorem}\label{thm:main_random}
Fix $\epsilon > 0$ such that $K_1(d^7s^2+d^2s^7)\epsilon^{1/4}<1$, $d\geq2$, and $s\geq2$. Assume  the networks satisfy assumptions A2 and A3. Assume for each $i = 1,\dots, d-1$, the entries of $\Wf_{i}$ are i.i.d. $\mathcal{N}(\vzero,\frac{1}{\nhi})$ and $i$th row of $\Wf_{d}$ satisfies $(\vw_{d}^{(1)})_i^\intercal = \vw^\intercal\cdot\vone_{\|\vw\|_2\leq3\sqrt{n_{d-1}/\l}}$ with $\vw \sim \mathcal{N}(0,\frac{1}{\l}\mI_{n_d-1})$. Similarly, assume for each $i = 1,\dots,s-1$, the entries of $\Ws_{i}$ are i.i.d.  $\mathcal{N}(0,\frac{1}{\pgi})$ and $i$th row of $\Ws_{s}$ satisfies $(\vw_{s}^{(2)})_i^\intercal = \vw^\intercal\cdot\vone_{\|\vw\|_2\leq3\sqrt{p_{s-1}/\l}}$ with $\vw \sim \mathcal{N}(\vzero,\frac{1}{\l}\mI_{p_s-1})$. Let $\mathcal{K}=\{(\vh,\vzero)\in\R^{n\times p}\left| \vh\in\R^{n}\right.\}\cup\{(\vzero,\vm)\in\R^{n\times p}\left| \vm\in\R^{p}\right.\}$ and $\mathcal{A} = \mathcal{A}_{K_2d^3s^3\epsilon^\frac{1}{4},(\htrue,\mtrue)}\cup \mathcal{A}_{K_2d^8s^3\epsilon^\frac{1}{4},\left(-\rhof\htrue,\mtrue\right)}\cup \mathcal{A}_{K_2d^3s^8\epsilon^\frac{1}{4},\left(\rhos\htrue,-\mtrue\right)}\cup \mathcal{A}_{K_2d^8s^8\epsilon^\frac{1}{4},\left(-\rhof\rhos\htrue,-\mtrue\right)}$. Then on an event of probability at least $1 - \sum_{i=1}^{d}\tilde{c}\nhi e^{-\gamma\nhimf}-\sum_{i=1}^{s}\tilde{c}\pgi e^{-\gamma\pgimf}-c e^{-\gamma \l}$ we have the following: for $(\htrue,\mtrue) \neq (\vzero,\vzero)$, and 
\begin{align*}
(\vh,\vm)\notin \mathcal{A} \cup \mathcal{K}
\end{align*}
the one-sided directional derivative of $f$ in the direction of $\vg = \vg_{1,(\vh,\vm)}$ or $\vg = \vg_{2,(\vh,\vm)}$, defined in \eqref{eq:directions}, satisfy $D_{-\vg}f(\vh,\vm)<0$. Additionally, for all $(\vh,\vm)\in \mathcal{K}$ and $(\vx,\vy)$
\begin{flalign*}
&D_{(\vx,\vy)}f(\vh,\vm)\leq0.
\end{flalign*}
Here, $\rhodk$ are positive numbers that converge to $1$ as $d\rightarrow\infty$, $c$ and $\gamma^{-1}$ are constants that depend polynomially on $\epsilon^{-1}$ and $\tilde{c}$, $K_1$, and $K_2$ are absolute constants.

\end{theorem}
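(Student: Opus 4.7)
My plan is to reduce Theorem \ref{thm:main_random} to (i) a uniform Weight Distribution Condition that replaces the random operators $\Lambdah$ and $\Lambdam$ by deterministic kernels, and (ii) an algebraic analysis of the zero set of the resulting deterministic vector fields $\tilde{\vg}_{1,(\vh,\vm)}$ and $\tilde{\vg}_{2,(\vh,\vm)}$. On any piece of $f$ on which the one-sided activation pattern (parametrized by $\vw$ in \eqref{eq:directions}) is fixed, $f$ is a polynomial, so the one-sided directional derivatives of interest satisfy $D_{-\vg_{j,(\vh,\vm)}}f(\vh,\vm) = -\|\vg_{j,(\vh,\vm)}\|_2^2$. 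It therefore suffices to show that for every $(\vh,\vm)\notin\mathcal{A}\cup\mathcal{K}$ at least one of $\|\tilde{\vg}_{1,(\vh,\vm)}\|_2$ or $\|\tilde{\vg}_{2,(\vh,\vm)}\|_2$ is bounded below by a quantity that dominates the concentration error.

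The first step establishes uniform concentration. For the inner layers, assumption A2 and i.i.d.\ Gaussianity let a Hand--Voroninski style argument give, uniformly in $\vh,\vx\in\R^n$ and $\vm,\vy\in\R^p$,
\begin{align*}
\prodWhdmf^\intercal\prodWxdmf \approx \mathcal{Q}^{(1)}_{d-1,\vh,\vx}, \qquad \prodWmsmf^\intercal\prodWysmf \approx \mathcal{Q}^{(2)}_{s-1,\vm,\vy},
\end{align*}
where each $\mathcal{Q}$ is an explicit deterministic kernel depending only on norms and angles. The final layers $\Wf_d,\Ws_s$ are truncated Gaussians with variance $\tfrac{1}{\l}$ and use the quadratic scaling A3: truncation bounds row norms uniformly, after which a Hanson--Wright / Bernstein style argument yields the $\l^{-1/2}$-rate concentration needed to handle the quadratic forms $\Lambdah^\intercal\diag(\vv)\Lambdax$ that appear in $\tilde{\vg}_{1,(\vh,\vm)}$ and $\tilde{\vg}_{2,(\vh,\vm)}$ (here $\vv$ is determined by $\vm$ and $\mtrue$, and carries the inter-generator coupling). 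Combining the two scales produces deterministic approximations $\bar{\vg}_{1,(\vh,\vm)},\bar{\vg}_{2,(\vh,\vm)}$ of the two one-sided gradient blocks, uniformly valid up to the $\epsilon^{1/4}$ error appearing in the hypothesis.

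Step 2 is the deterministic landscape analysis. By positive homogeneity of ReLU networks, zeros of $(\bar{\vg}_1,\bar{\vg}_2)$ come in hyperbolic families $\{(c\htilde,\mtilde/c):c>0\}$. Setting $\bar{\vg}_{1,(\vh,\vm)}=0$ and exploiting the kernel structure reduces to a scalar equation for $\angle(\vh,\htrue)$ of the same type analyzed in \cite{Hand2017DCS, hand2018DPR} for deep compressed sensing and deep phase retrieval: beyond the trivial root at $\angle(\vh,\htrue)=0$, there is an almost-antipodal root producing the spurious scalar $-\rhof$ with $\rhof\to 1$ as $d\to\infty$, and the analogous analysis of $\bar{\vg}_{2,(\vh,\vm)}$ on the $\vm$-block gives $-\rhos$. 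The four sign combinations give exactly the four hyperbolic curves parametrizing $\mathcal{A}$. A quantitative version lower-bounds the norm of whichever deterministic block does not vanish by a quantity of order $\|(\htrue,\mtrue)\|_2\|(\vh,\vm)\|_2 /\mathrm{poly}(d,s)$ off the corresponding hyperbolic neighborhood, which combined with Step 1 gives the strict descent $D_{-\vg}f<0$ outside the enlarged neighborhoods with radii $K_2 d^a s^b \epsilon^{1/4}$.

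The local-maximizer claim on $\mathcal{K}$ is direct and uses no randomness: when $\vm=\vzero$ and $\vh\neq\vzero$, positive homogeneity gives $\Gm(t\vy)=t\Gm(\vy)$ for $t>0$ and $\Gh(\vh+t\vx)\to\Gh(\vh)$ on the appropriate piece, hence
\begin{align*}
D_{(\vx,\vy)}f(\vh,\vzero) = -\langle\ytrue,\Gh(\vh)\odot\Gm(\vy)\rangle \leq 0,
\end{align*}
because $\Gh(\htrue),\Gm(\mtrue),\Gh(\vh),\Gm(\vy)$ are all entrywise nonnegative as outputs of final ReLU layers; the case $\vh=\vzero$ is symmetric, and $(\vzero,\vzero)$ yields derivative zero by a $t^2$ expansion. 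The main obstacle I anticipate is the quantitative disentangling of the two generators in Step 2: the entrywise product $\Gh\odot\Gm$ in the objective couples the two deterministic kernels multiplicatively, and it is precisely this coupling --- each deterministic block depending on both $(\vh,\htrue)$-angles and $(\vm,\mtrue)$-angles --- that forces the asymmetric polynomial factors $d^7s^2 + d^2s^7$ in the scalar hypothesis and the mixed $d^a s^b$ exponents in the four neighborhood radii. Propagating perturbations simultaneously through both networks while maintaining clean sign and magnitude statements is the technical heart of the argument.
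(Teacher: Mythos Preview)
Your overall strategy matches the paper's: reduce the random theorem to a deterministic one via uniform Weight Distribution Conditions on the inner layers and a joint-WDC on the pair $(\Wf_d,\Ws_s)$, then analyze the (approximate) zero set of the deterministic surrogates $\bar{\vg}_1,\bar{\vg}_2$ to obtain the four hyperbolic neighborhoods. The paper packages Step~1 as explicit WDC and joint-WDC conditions (its Lemmas for Gaussian and truncated Gaussian matrices), and Step~2 as a concentration lemma plus a ``control of zeros'' lemma; your Steps~1--2 are the same program at the same level of detail.

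Two points deserve comment. First, your claim $D_{-\vg_{j,(\vh,\vm)}}f(\vh,\vm)=-\|\vg_{j,(\vh,\vm)}\|_2^2$ is only literally correct when $(\vh,\vm)$ lies in the interior of the piece used to define $\vg_j$; at activation boundaries the direction $-\vg_j$ may enter a different piece. The paper handles this by taking a sequence $(\vh_n,\vm_n)\to(\vh,\vm)$ of differentiable points, writing $D_{-\vg_1}f(\vh,\vm)=-\lim_n \vfhmn\cdot\vfhm/\|\vg_1\|$, and then lower-bounding that limit via the \emph{continuity of the deterministic target} $\tfhm$ together with the uniform concentration bound. Your outline needs this extra step; otherwise the argument at non-differentiable points is incomplete.

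Second, your treatment of $\mathcal{K}$ is cleaner than the paper's. The paper establishes $D_{(\vx,\vy)}f(\vzero,\vm)\le 0$ by invoking joint-WDC to compare the relevant inner product to $\frac{\alpha}{4\l}\cos\thetadbarf\cos\thetadbars\cdot(\text{norms})$, which requires the quantitative hypothesis $4\pi^2\epsilon/\alpha<1$. Your observation that $\ytrue=\Gh(\htrue)\odot\Gm(\mtrue)$, $\Gh(\vx)$, and $\Gm(\vm)$ are all entrywise nonnegative (as outputs of ReLU layers) gives $D_{(\vx,\vy)}f(\vzero,\vm)=-\langle\ytrue,\Gh(\vx)\odot\Gm(\vm)\rangle\le 0$ with no randomness and no hypothesis on $\epsilon$; this is a genuine simplification.
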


We prove Theorem \ref{thm:main_random} by showing that neural networks with random weights satisfy two deterministic conditions. These conditions are the Weight Distributed Condition (WDC) and the joint Weight Distributed Condition (joint-WDC). The WDC is a slight generalization of the WDC introduced in \cite{Hand2017DCS}. We say a matrix $\mW\in\R^{\l\times n}$ satisfies the WDC with constants $\epsilon>0$ and $0<\alpha\leq 1$ if for all nonzero $\vx$, $\vy \in \R^k$,
\begin{align}
	\left\|\sum_{i=1}^\l \vone_{\vw_i\cdot\vx>0}\vone_{\vw_i\cdot\vy>0}\cdot\vw_i\vw_i^\intercal - \alpha \mQ_{\vx,\vy} \right\|\leq \epsilon, \text{ with } \mQ_{\vx,\vy} = \frac{\pi-\theta_0}{2\pi}\mI_{n}+\frac{\sin\theta_0}{2\pi}\mM_{\hat{\vx}\leftrightarrow\hat{\vy}},
\end{align}
where $\vw_i \in \R^n$ is the $i$th row of $\mW$; $\mM_{\hat{\vx}\leftrightarrow\hat{\vy}}\in\R^{n\times n}$ is the matrix such that $\hat{\vx}\rightarrow\hat{\vy}$, $\hat{\vy}\rightarrow\hat{\vx}$, and $\vz\rightarrow \vzero$ for all $\vz \in \text{span}(\{\vx,\vy\})^\perp$; $\hat{\vx} = \vx/\|\vx\|_2$ and $\hat{\vy} = \vy/\|\vy\|_2$; $\theta_0 = \angle(\vx,\vy)$; and $\vone_S$ is the indicator function on $S$. If $\vw_i \sim \mathcal{N}(\vzero,\frac{1}{\l}\mI_n)$ for all $i$, then an elementary calculation shows that $\mathbb{E}\left[\sum_{i=1}^\l \vone_{\vw_i\cdot\vx>0}\vone_{\vw_i\cdot\vy>0}\cdot\vw_i\vw_i^\intercal\right] = \mQ_{\vx,\vy}$ and if $\vx = \vy$ then $\mQ_{\vx,\vy}$ is an isometry up to a factor of $1/2$. Also, note that if $\mW$ satisfies WDC with constants $\epsilon$ and $\alpha$, then $\frac{1}{\sqrt{\alpha}}\mW$ satisfies WDC with constants $\epsilon/\alpha$ and $1$.

We now state the joint Weight Distributed Condition. We say that $\mB \in\R^{\l\times n}$ and $\mC \in \R^{\l\times p}$ satisfy joint-WDC with constants $\epsilon>0$ and $0<\alpha\leq 1$ if for all nonzero $\vh$, $\vx \in \R^{n}$ and nonzero $\vm$, $\vy \in \R^{p}$,
\begin{align}
&\left\|\mB_{+,\vh}^\intercal\diag\left(\mC_{+,\vm}\vm\odot\mC_{+,\vy}\vy\right)\mB_{+,\vx}- \frac{\alpha}{\l}\mtrans \mQ_{\vm,\vy}\vy\cdot \mQ_{\vh,\vx}\right\|\leq\frac{\epsilon}{\l}\|\vm\|_2\|\vy\|_2, \text{ and}\label{eq:pair-WDC1}\\
&\left\|\mC_{+,\vm}^\intercal\diag\left(\mB_{+,\vh}\vh\odot\mB_{+,\vx}\vx\right)\mC_{+,\vy}- \frac{\alpha}{\l}\htrans \mQ_{\vh,\vx}\vx\cdot \mQ_{\vm,\vy}\right\|\leq\frac{\epsilon}{\l}\|\vh\|_2\|\vx\|_2\label{eq:pair-WDC2}
\end{align}

 We analyze networks $\Gh$ and $\Gm$ where the weight matrices corresponding to the inner layers satisfy the WDC with constants $\epsilon>0$  and $1$ and for the two matrices corresponding to the outer layers, we assume that one of them satisfies WDC with constants $\epsilon$ and $0<\alpha_1\leq1$ and the other satifies WDC with constants $\epsilon$ and $0<\alpha_2\leq1$. We also assume that the two outer layer matrices satisfy joint-WDC with constants $\epsilon>0$ and $\alpha=\alpha_1\cdot\alpha_2$. We now state the main deterministic result:
\begin{theorem}\label{thm:main_determ}
Fix $\epsilon > 0$, $0<\alpha_1\leq 1$ and $0<\alpha_2\leq 1$ such that $K_1(d^7s^2+d^2s^7)\epsilon^{1/4}/(\alpha_1\alpha_2)<1$, $d\geq2$, and $s\geq2$. Let $\mathcal{K}=\{(\vh,\vzero)\in\R^{n\times p}\left| \vh\in\R^{n}\right.\}\cup\{(\vzero,\vm)\in\R^{n\times p}\left| \vm\in\R^{p}\right.\}$. Suppose that $\Wf_i \in\R^{\nhi\times \nhimf}$ for $i=1,\dots,d-1$ and $\Ws_i \in\R^{\pgi\times \pgimf}$ for $i=1,\dots,s-1$ satisfy the WDC with constant $\epsilon$ and $1$. Suppose $\Wf_d \in \R^{\l\times n_{d-1} }$ satisfy WDC with constants $\epsilon$ and $\alpha_1$, and $\Ws_s \in \R^{\l\times p_{s-1}}$ satisfy WDC with constants $\epsilon$ and $\alpha_2$. Also, suppose $\left(\Wf_d,\Ws_s\right)$ satisfy joint-WDC with constants $\epsilon$, $\alpha =\alpha_1\cdot\alpha_2$.  Let $\mathcal{K}=\{(\vh,\vzero)\in\R^{n\times p}\left| \vh\in\R^{n}\right.\}\cup\{(\vzero,\vm)\in\R^{n\times p}\left| \vm\in\R^{p}\right.\}$ and 
\begin{align*}
	\mathcal{A} =\ &\mathcal{A}_{K_2 d^3s^3\epsilon^\frac{1}{4}\alpha^{-1},(\htrue,\mtrue)}\cup \mathcal{A}_{K_2d^8s^3\epsilon^\frac{1}{4}\alpha^{-1},\left(-\rhof\htrue,\mtrue\right)}\cup \mathcal{A}_{K_2d^3s^8\epsilon^\frac{1}{4}\alpha^{-1},\left(\rhos\htrue,-\mtrue\right)}\\& \cup \mathcal{A}_{K_2d^8s^8\epsilon^\frac{1}{4}\alpha^{-1},\left(-\rhof\rhos\htrue,-\mtrue\right)}.
\end{align*}
Then, for $(\htrue,\mtrue) \neq (\vzero,\vzero)$, and 
\begin{align*}
&(\vh,\vm)\notin\mathcal{A}\cup \mathcal{K}
\end{align*}
the one-sided directional derivative of $f$ in the direction of $\vg = \vg_{1,(\vh,\vm)}$ or $\vg = \vg_{2,(\vh,\vm)}$ satisfy $D_{-\vg}f(\vh,\vm)<0$. Additionally, for all $(\vh,\vm)\in \mathcal{K}$ and for all $(\vx,\vy)$
\begin{flalign*}
&D_{(\vx,\vy)}f(\vh,\vm)\leq0.
\end{flalign*}
Here, $\rhodk$ are positive numbers that converge to $1$ as $d\rightarrow\infty$, and $K_1$, and $K_2$ are absolute constants.
\end{theorem}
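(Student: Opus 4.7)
First I would reduce the descent claim to a lower bound on the concentrated partial gradients. At a differentiable point $(\vh,\vm)$ we have $D_{-\vg_{i,(\vh,\vm)}}f(\vh,\vm)=-\|\tilde{\vg}_{i,(\vh,\vm)}\|_2^2$, and at a non-differentiable point the same identity holds with $\tilde{\vg}_i$ replaced by the one-sided limit built into the definition \eqref{eq:directions}, since $\Gh$ and $\Gm$ are piecewise linear in each linear region. So the descent part of Theorem \ref{thm:main_determ} will follow from a quantitative lower bound on $\max(\|\tilde{\vg}_1\|_2,\|\tilde{\vg}_2\|_2)$ on the complement of $\mathcal{A}\cup\mathcal{K}$. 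The set $\mathcal{K}$ is handled separately: if $\vh=\vzero$ then $\Gh(\vzero)=\vzero$, and 1-homogeneity of the bias-free ReLU network combined with the entrywise inequalities $\ytrue\geq\vzero$ and $\Gh(\vx)\odot\Gm(\vm)\geq\vzero$ gives $D_{(\vx,\vy)}f(\vzero,\vm)=-\langle\ytrue,\Gh(\vx)\odot\Gm(\vm)\rangle\leq0$ for every direction $(\vx,\vy)$; the case $\vm=\vzero$ is symmetric.

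\textbf{Concentration via WDC and joint-WDC.} Writing out the explicit formulas, each $\tilde{\vg}_{i,(\vh,\vm)}$ factors through an inner-layer piece and an outer-layer piece. Iterating the WDC on the inner layers yields $(\Lambdahdmf)^\intercal\Lambdaxdmf = 2^{-(d-1)}\mQ_{\vh,\vx}+O_1(d\epsilon)$, uniformly in $\vh,\vx$, with the analogous bound for $\Lambdamdmf$. The joint-WDC inequalities \eqref{eq:pair-WDC1}--\eqref{eq:pair-WDC2} then control the four-way outer-layer interaction $\Bh^\intercal\diag(\Cm\vm\odot\Cy\vy)\Bx$ introduced by the entrywise product observation. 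Composing both layers of approximation writes $\tilde{\vg}_{i,(\vh,\vm)} = \vq_{i,(\vh,\vm)} + (\text{error})$, where $\vq_i$ is deterministic and depends on $(\vh,\vm,\htrue,\mtrue)$ only through their norms and the two angles $\theta_h=\angle(\vh,\htrue)$, $\theta_m=\angle(\vm,\mtrue)$, and the error is of order $\epsilon\cdot\mathrm{poly}(d,s)\alpha^{-1}$ times the relevant norm factors.

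\textbf{Landscape of the concentrated gradient.} Because $\vq_{1,(\vh,\vm)}\in\operatorname{Span}(\vh,\htrue)$ and $\vq_{2,(\vh,\vm)}\in\operatorname{Span}(\vm,\mtrue)$, the simultaneous vanishing $\vq_1=\vq_2=\vzero$ reduces to a four-scalar system in $(\|\vh\|,\|\vm\|,\theta_h,\theta_m)$. The angular part decouples into two independent one-variable fixed-point equations of the same form analyzed in \cite{Hand2017DCS}, whose only solutions in $[0,\pi]$ are $0$ and $\pi$. At $\theta=0$ the remaining radial balance gives the true scaling-ambiguity curve $(c\htrue,c^{-1}\mtrue)$; at $\theta=\pi$ it produces a scalar $\rhof$ (respectively $\rhos$) obtained as a fixed point of the iterated angle-norm recursion through $d$ (respectively $s$) layers, with $\rhof,\rhos\to1$ as the depth grows. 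Pairing the two binary choices reproduces exactly the four branches comprising $\mathcal{A}$. A Taylor expansion near each branch, combined with a direct lower bound away from all four, yields $\max(\|\vq_1\|,\|\vq_2\|)\gtrsim \alpha(d^7s^2+d^2s^7)^{-1}\epsilon^{1/4}\|\htrue\|^2\|\mtrue\|^2$ on the complement of $\mathcal{A}$. Under the hypothesis $K_1(d^7s^2+d^2s^7)\epsilon^{1/4}/\alpha<1$ this dominates the concentration error from the previous step, yielding the claimed $D_{-\vg}f(\vh,\vm)<0$.

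\textbf{Main obstacle.} The hard step is the quantitative landscape analysis: tracking how the polynomial blowups $d^3s^3$, $d^8s^3$, $d^3s^8$, $d^8s^8$ governing the four neighborhood radii of $\mathcal{A}$ arise from Taylor expansions around each of the four branches, and ensuring that the lower bound on $\max(\|\vq_1\|,\|\vq_2\|)$ degrades only polynomially as one approaches a spurious branch, so that a single clean hypothesis makes it dominate the error term. A secondary difficulty is propagating the joint-WDC error through the composition cleanly enough to obtain the stated $\epsilon^{1/4}$ radii rather than the weaker $\epsilon^{1/2}$ a loose triangle inequality would give.
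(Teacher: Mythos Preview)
Your overall strategy matches the paper's: concentrate the partial gradients around continuous deterministic vectors $\tfhm,\tshm$ via WDC and joint-WDC, localize the simultaneous near-zeros of those vectors to the four hyperbolic branches, and treat $\mathcal{K}$ separately.

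\textbf{One genuine gap.} The reduction ``$D_{-\vg_i}f(\vh,\vm)=-\|\tilde{\vg}_i\|_2^2$, and at a non-differentiable point the same identity holds with $\tilde{\vg}_i$ replaced by the one-sided limit'' is not correct as stated. The vector $\vg_{1,(\vh,\vm)}$ is built from the limit of $\nabla_{\vh}f$ along a \emph{fixed auxiliary} direction $\vw$, whereas computing $D_{-\vg_1}f(\vh,\vm)$ requires approaching $(\vh,\vm)$ along $-\vg_1$, which may lie in a different linear piece of $f$. So $D_{-\vg_1}f(\vh,\vm)=-\langle \vv',\vfhm\rangle$ for a limit $\vv'$ that need not equal $\vfhm$. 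The paper closes this by observing that \emph{every} such one-sided gradient limit concentrates around the same continuous $\tfhm$, so the cross inner product $\langle\vv',\vfhm\rangle$ is forced positive once $\|\tfhm\|$ exceeds (a small multiple of) the concentration error. This is a small fix using exactly the machinery you already set up, but your reduction step as written does not go through.

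\textbf{Two points where you diverge from the paper.} Your treatment of $\mathcal{K}$ via $1$-homogeneity and entrywise nonnegativity of ReLU outputs, giving $D_{(\vx,\vy)}f(\vzero,\vm)=-\langle\ytrue,\Gh(\vx)\odot\Gm(\vm)\rangle\leq0$, is simpler than the paper's, which instead invokes joint-WDC and needs $4\pi^2\epsilon/\alpha<1$; your argument is $\epsilon$-free. On the other hand, your account of the $\epsilon^{1/4}$ radii is off: they are not the result of a sharpened triangle inequality but of composing two square-root losses. Concentration costs $\sqrt{\epsilon}$; then the near-zero analysis takes a threshold $\eta$ on $\|\vq_i\|$ and returns neighborhoods of radius $O(\sqrt{\eta})$, because the component equations first force $|\sin\thetaobark|\lesssim\sqrt{\eta}$ before pinning the radial balance. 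Substituting $\eta\sim d^3s^3\sqrt{\epsilon}/\alpha$ gives $\epsilon^{1/4}$ and also explains the asymmetric prefactors $d^3s^3,\,d^8s^3,\,d^3s^8,\,d^8s^8$: the $\theta\approx\pi$ branches lose extra powers of depth in the angle-to-radius conversion.
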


We prove the theorems by showing that the descent directions $\vg_{1,(\vh,\vm)}$ and $\vg_{2,(\vh,\vm)}$ concentrate around its expectation and then characterize the set of points where the corresponding expectations are simultaneously zero. The outline of the proof is:
\begin{itemize}
\item The WDC and joint-WDC imply that the one-sided partial directional derivatives of $f$ concentrate uniformly for all non-zero $\vh$, $\htrue \in \R^n$ and $\vm$, $\mtrue \in \R^p$ around continuous vectors $\tfhmhomo$ and  $\tshmhomo$, respectively, defined in equations \eqref{eq:tfhmhomo} and \eqref{eq:tshmhomo} in the Appendix.
\item Direct analysis show that $\tfhmhomo$ and $\tshmhomo$ are simultaneously approximately zero around the four hyperbolic sets $\mathcal{A}_{\epsilon,(\htrue,\mtrue)}$, $\mathcal{A}_{\epsilon,(-\rhof \htrue,\mtrue)}$, $\mathcal{A}_{\epsilon,(\htrue,-\rhos\mtrue)}$, and $\mathcal{A}_{\epsilon,(-\rhof\htrue,-\rhos\mtrue)}$, where $\epsilon$ depends on the expansivity and number of layers in the networks, and both $\rhof$ and $\rhos$ are positive constants close to 1 and depends on the number of layers in the two neural networks as well.
\item Using sphere covering arguments, Gaussian and truncated Gaussian matrices with appropriate dimensions satisfy the WDC and joint-WDC conditions. 
\end{itemize}
The full proof of Theorem \ref{thm:main_determ} is provided in the Appendix.

\section{Numerical Experiment}
We now empirically show that Algorithm \ref{alg:gradient_descent} can remove distortions present in the dataset. We consider the image recovery task of removing distortions that were synthetically introduced to the MNIST dataset. The distortion dataset contain 8100  images of size $28 \times 28$ where the distortions are generated using a 2D Gaussian function, $g(x,y) = e^{-\tfrac{(x-c)^2+(y-c)^2}{\sigma}}$, where $c$ is the center and $\sigma$ controls its tail behavior. For each of the 8100 image, we fix $c$ and $\sigma$, which vary uniformly in the intervals $[-3,3]$ and $[20,35]$, respectively, and $x$ and $y$ are in the interval $[-5,5]$. Prior to training the generators, the images in the MNIST dataset and the distortion dataset were resized to $64\times 64$ images. We used  DCGAN \citep{Radford2016DCGAN} with a learning rate of 0.0002 and latent code dimension of 50 to train a generator, $\Gm$, for the distortion images. Similarly, we used the DCGAN with learning rate of 0.0002 and latent code dimension of 100 to train a generator, $\Gh$, for the MNIST images.  Finally, a distorted image $\ytrue$ is generated via the  pixelwise multiplication of an image $\wtrue$ from the MNIST dataset and an image $\xtrue$ from the distortion dataset, i.e. $\ytrue = \wtrue\odot\xtrue$.
\begin{figure}[H]
\includegraphics[scale=.22]{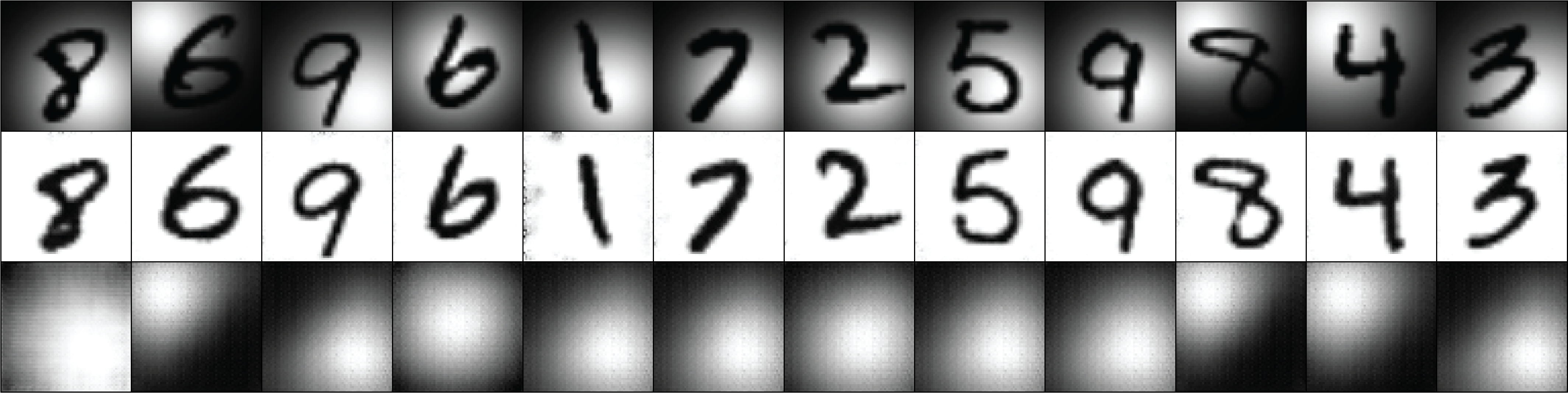}
\caption{The figure shows the result removing distortion in an image by solving \eqref{eq:ELF} using Algorithm \ref{alg:gradient_descent}. The top row corresponds to the input distorted image. The second and third row corresponds to the images recovered using empirical risk minimization.}
\label{fig:mnist_dist}
\end{figure}
\vspace{-10pt}

Figure \ref{fig:mnist_dist} shows the result of using Algorithm \ref{alg:gradient_descent} to remove distortion from $\ytrue$. In the implementation of Algorithm \ref{alg:gradient_descent}, $\tilde{\vg}_{1,(\vh_i,\vm_i)}$ and $\tilde{\vg}_{1,(\vh_i,\vm_i)}$ corresponds to the partial derivatives of $f$ with the generators as $\Gh$ and $\Gm$. We used the Stochastic Gradient Descent algorithm with the step size set to 1 and momentum set to 0.9. For each image in the first row of Figure \ref{fig:mnist_dist}, the corresponding images in the second and third rows are the output of Algorithm \ref{alg:gradient_descent} after 500 iterations.

\bibliographystyle{IEEEtran}
\bibliography{Bibliography}
\newpage
\section{Appendix}

Let $\angle(\vh,\htrue) = \thetaobarf$ and $\angle(\vm,\mtrue) = \thetaobars$ for non-zero $\vh,\htrue \in \R^n$ and $\vm,\mtrue \in \R^p$. In order to understand how the operators $\vh\rightarrow \Wf_{+,\vh}\vh$ and $\vm\rightarrow \Ws_{+,\vm}\vm$ distort angles, we define 
\begin{equation}\label{eq:g}
	g(\theta) = \cos^{-1}\left(\frac{(\pi-\theta)\cos\theta+\sin\theta}{\pi}\right).
\end{equation}
Also, for a fixed $\vp,\ \vq \in \R^n$, define
\begin{equation}\label{eq:tpq}
\tilde{t}_{\vp,\vq}^{(k)} := \frac{1}{2^\ak}\left[\left(\prod_{i=0}^{\ak-1}\frac{\pi-\thetaibark}{\pi}\right)\vq+\sum_{i=0}^{\ak-1}\frac{\sin\thetaibark}{\pi}\left(\prod_{j=i+1}^{\ak-1}\frac{\pi-\thetajbark}{\pi}\right)\frac{\|\vq\|_2}{\|\vp\|_2}\vp \right],
\end{equation}
where $\thetaibark = g(\thetaimbark)$ for $g$ given by \eqref{eq:g}, $\thetaobark = \angle(\vp,\vq)$, $\af=d$, and $\as = s$. 

\subsection{Proof of Deterministic Theorem}

\begin{theorem}[Also Theorem \ref{thm:main_determ}]\label{thm:main_determ_supplementary}
Fix $\epsilon > 0$, $0<\alpha_1\leq 1$ and $0<\alpha_2\leq 1$ such that $K_1(d^7s^2+d^2s^7)\epsilon^{1/4}/(\alpha_1\alpha_2)<1$, $d\geq2$, and $s\geq2$. Let $\mathcal{K}=\{(\vh,\vzero)\in\R^{n\times p}\left| \vh\in\R^{n}\right.\}\cup\{(\vzero,\vm)\in\R^{n\times p}\left| \vm\in\R^{p}\right.\}$. Suppose that $\Wf_i \in\R^{\nhi\times \nhimf}$ for $i=1,\dots,d-1$ and $\Ws_i \in\R^{\pgi\times \pgimf}$ for $i=1,\dots,s-1$ satisfy the WDC with constant $\epsilon$ and $1$. Suppose $\Wf_d \in \R^{\l\times n_{d-1} }$ satisfy WDC with constants $\epsilon$ and $\alpha_1$, and $\Ws_s \in \R^{\l\times p_{s-1}}$ satisfy WDC with constants $\epsilon$ and $\alpha_2$. Also, suppose $\left(\Wf_d,\Ws_s\right)$ satisfy joint-WDC with constants $\epsilon$, $\alpha =\alpha_1\cdot\alpha_2$.  Let $\mathcal{K}=\{(\vh,\vzero)\in\R^{n\times p}\left| \vh\in\R^{n}\right.\}\cup\{(\vzero,\vm)\in\R^{n\times p}\left| \vm\in\R^{p}\right.\}$ and $\mathcal{A} = \mathcal{A}_{K_2 d^3s^3\epsilon^\frac{1}{4}\alpha^{-1},(\htrue,\mtrue)}\cup \mathcal{A}_{K_2d^8s^3\epsilon^\frac{1}{4}\alpha^{-1},\left(-\rhof\htrue,\mtrue\right)}\cup \mathcal{A}_{K_2d^3s^8\epsilon^\frac{1}{4}\alpha^{-1},\left(\rhos\htrue,-\mtrue\right)}\cup \mathcal{A}_{K_2d^8s^8\epsilon^\frac{1}{4}\alpha^{-1},\left(-\rhof\rhos\htrue,-\mtrue\right)}$. Then, for $(\htrue,\mtrue) \neq (\vzero,\vzero)$, and 
\begin{align*}
&(\vh,\vm)\notin\mathcal{A}\cup \mathcal{K}
\end{align*}
the one-sided directional derivative of $f$ in the direction of $\vg = \vg_{1,(\vh,\vm)}$ or $\vg = \vg_{2,(\vh,\vm)}$ satisfy $D_{-\vg}f(\vh,\vm)<0$. Additionally, for all $(\vh,\vm)\in \mathcal{K}$ and for all $(\vx,\vy)$
\begin{flalign*}
&D_{(\vx,\vy)}f(\vh,\vm)\leq0.
\end{flalign*}
Here, $\rhodk$ are positive numbers that converge to $1$ as $d\rightarrow\infty$, and $K_1$, and $K_2$ are absolute constants.
\end{theorem}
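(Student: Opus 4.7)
The strategy is to verify that, under the WDC and joint-WDC hypotheses, the two ``block'' one-sided directional derivatives of $f$ concentrate around explicit deterministic surrogates $\tfhmhomo$ and $\tshmhomo$, and then to characterize the simultaneous zero set of those surrogates.

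First I would fix $\vw$ and evaluate at the differentiable point $(\vh,\vm)+\delta\vw$ for arbitrarily small $\delta>0$. Piecewise linearity of $\Gh,\Gm$ means each generator acts as the fixed linear map $\Lambdah$, $\Lambdam$ determined by the sign patterns at $(\vh,\vm)+\delta\vw$, so the one-sided partial derivatives are exactly $\tilde\vg_{1,(\vh,\vm)}$ and $\tilde\vg_{2,(\vh,\vm)}$. Applying the single-matrix WDC repeatedly to the inner layers $\Wf_1,\dots,\Wf_{d-1}$ and $\Ws_1,\dots,\Ws_{s-1}$, the ``matched'' chain products $(\prodWhdmf)^\intercal (\prodWhdmf)$ collapse to scaled products of $Q$-matrices whose angles iterate through the map $g$ in \eqref{eq:g}. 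Combining this with the joint-WDC on $(\Wf_d,\Ws_s)$ lets me replace the outer-layer diagonal-weighted cross product $\Lambdah^\intercal\diag(\Lambdam\vm\odot\Lambdamo\mtrue)\Lambdaho$ by a scalar of the form $\frac{\alpha_1\alpha_2}{\l}\mtrans Q_{\vm,\mtrue}\mtrue$ times the appropriate $Q$-matrix in the $\vh,\htrue$ variables (and similarly for the $(\vh,\vm)\leftrightarrow(\htrue,\mtrue)$ diagonal term). Collecting the resulting scalar factors and $Q$-matrix actions expresses the gradients in terms of the building blocks $\tftilde_{\vh,\htrue}$ and $\tstilde_{\vm,\mtrue}$ of \eqref{eq:tpq}, giving the continuous surrogates $\tfhmhomo,\tshmhomo$ with error bounded by $\mathrm{poly}(d,s)\,\epsilon/(\alpha_1\alpha_2)$ uniformly over nonzero $(\vh,\vm)$.

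Second, I would directly analyze the simultaneous zero set of $\tfhmhomo$ and $\tshmhomo$. Decomposing each surrogate in the two-dimensional span of $\{\vh,\htrue\}$ (resp.\ $\{\vm,\mtrue\}$) reduces the problem to a scalar system in $\|\vh\|_2,\|\vm\|_2,\thetaobarf=\angle(\vh,\htrue),\thetaobars=\angle(\vm,\mtrue)$. The contraction property $g(\theta)\le\theta$, with fixed points only at $0$ and $\pi$, forces the iterated angles $\thetadbarf,\thetasbars$ to be driven to $0$ or to the asymptotic constants producing $\rhof,\rhos$ exactly along the four sign patterns $\thetaobarf\in\{0,\pi\},\thetaobars\in\{0,\pi\}$; solving for the length ratio $\|\vh\|_2/\|\htrue\|_2$ yields the four hyperbolic branches defining $\mathcal{A}$. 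A quantitative Lipschitz-type argument near each branch shows that outside $\mathcal{A}$ at least one of $\|\tfhmhomo\|,\|\tshmhomo\|$ exceeds the concentration error, so the one-sided derivative of $f$ in the direction $-\vg_{1,(\vh,\vm)}$ or $-\vg_{2,(\vh,\vm)}$ is strictly negative. For points in $\mathcal{K}$ I would invoke positive homogeneity and entrywise nonnegativity of ReLU: if $\vh=\vzero$, then for $t>0$ small, $\Gh(\vh+t\vx)=t\Gh(\vx)\ge\vzero$ entrywise, so
\begin{equation*}
f(t\vx,\vm+t\vy)=\tfrac12\|\ytrue\|_2^2-t\langle\ytrue,\Gh(\vx)\odot\Gm(\vm)\rangle+O(t^2),
\end{equation*}
and since $\ytrue,\Gh(\vx),\Gm(\vm)\ge\vzero$ entrywise the linear coefficient is nonnegative; the symmetric argument at $\vm=\vzero$ completes the case $D_{(\vx,\vy)}f(\vh,\vm)\le0$.

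The hardest step will be the first one: the outer-layer cross product couples the two networks through a diagonal weight, so single-matrix WDC alone does not suffice. The joint-WDC is designed to give a clean outer-layer estimate, but propagating it back through the inner layers while keeping the error polynomial in $d,s$ and producing the sharp hyperbolic-neighborhood width of order $\mathrm{poly}(d,s)\epsilon^{1/4}\alpha^{-1}$ requires a careful layer-by-layer induction, tracking both the contraction factor $2^{-d}$ (resp.\ $2^{-s}$) and the accumulated angular distortion, in the spirit of but strictly more intricate than the single-network analysis of \cite{Hand2017DCS}.
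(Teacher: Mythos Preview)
Your proposal follows the same overall architecture as the paper: concentrate the one-sided partial derivatives around the continuous surrogates $\tfhmhomo,\tshmhomo$ via WDC and joint-WDC (this is the paper's Lemma~\ref{lem:main_concentration}), then localize the simultaneous approximate-zero set of those surrogates to the four hyperbolic neighborhoods (Lemma~\ref{lem:control_zeros}).  Your two-dimensional reduction in the span of $\{\vh,\htrue\}\times\{\vm,\mtrue\}$ and the role of the angle map $g$ with the limiting constants $\rhof,\rhos$ are exactly what the paper does.  One small correction: the concentration error in Lemma~\ref{lem:main_concentration} scales like $d^3s^3\sqrt{\epsilon}$, not $\epsilon$; the final $\epsilon^{1/4}$ width of $\mathcal{A}$ then comes from the extra square root that Lemma~\ref{lem:control_zeros} introduces when inverting the angular and length estimates.

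Where you genuinely diverge from the paper is in the treatment of $(\vh,\vm)\in\mathcal{K}$.  The paper arrives at the same directional derivative $-\langle\Gh(\vx)\odot\Gm(\vm),\,\ytrue\rangle$ (written there as $\langle\diag(\Lambdam\vm)\Lambdax\vx,\diag(\Lambdamo\mtrue)\Lambdaho\htrue\rangle$) but then invokes the joint-WDC to lower-bound it by a quantity of the form $(-4\epsilon/\l+\alpha/(4\pi^2\l))\|\xtilde\|_2\|\htruetilde\|_2\|\mtilde\|_2\|\mtruetilde\|_2$, which requires the additional hypothesis $4\pi^2\epsilon/\alpha<1$.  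Your observation that $\ytrue=\Gh(\htrue)\odot\Gm(\mtrue)$, $\Gh(\vx)$, and $\Gm(\vm)$ are all entrywise nonnegative (as outputs of a final ReLU) gives $\langle\ytrue,\Gh(\vx)\odot\Gm(\vm)\rangle\ge 0$ and hence $D_{(\vx,\vy)}f\le 0$ immediately, with no appeal to WDC or joint-WDC at all for this step.  This is a strictly more elementary argument and a genuine simplification over the paper's route.
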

\begin{proof}
Recall that 
\begin{align*}
		&\vfhmhomo = \left\{
		\begin{array}{ll}
			\nabla_{\vh} f(\vh,\vm) & G \text{ is differentiable at } (\vh,\vm),\\
			\lim_{\delta\rightarrow 0^+} \nabla_{\vh} f((\vh,\vm)+\delta \vw) & \text{ otherwise }
		\end{array}
		\right.\\
		&\vshmhomo = \left\{
		\begin{array}{ll}
			\nabla_{\vm} f(\vh,\vm) & G \text{ is differentiable at } (\vh,\vm),\\
			\lim_{\delta\rightarrow 0^+} \nabla_{\vm} f((\vh,\vm)+\delta \vw) & \text{ otherwise }
		\end{array}
		\right.
\end{align*}
where $G(\vh,\vm)$ is differentiable at $(\vh,\vm)+\delta\vw$ for sufficiently small $\delta$.  Such a $\delta$ exists because of piecewise linearity of $G(\vh,\vm)$ and any such $\vw$ can be arbitrarily selected. Also, recall that for any differentiable point $(\vh,\vm)$, we have
\begin{align*}
	\nabla_{\vh} f(\vh,\vm) =& \left(\Lambdah\right)^{\top}\diag\left(\Lambdam\vm\right)^2\Lambdah\vh\notag\\
	& -\left(\Lambdah\right)^{\top}\diag\left(\Lambdam\vm\odot\Lambdamo\mtrue\right)\Lambdaho\htrue,\\
	\nabla_{\vm} f(\vh,\vm) =& \left(\Lambdam\right)^{\top}\diag\left(\Lambdah\vh\right)^2\Lambdam\vm\notag\\
	&-\left(\Lambdam\right)^{\top}\diag\left(\Lambdah\vh\odot\Lambdaho\htrue\right)\Lambdamo\mtrue.
\end{align*}
Let 
\begin{align}
&\gfhm = \left[\begin{array}{c}\vfhmhomo\\ \vzero \end{array}\right]\in\R^{n\times p},\nonumber\\
&\gshm = \left[\begin{array}{c}\vzero\\ \vshmhomo \end{array}\right]\in\R^{n\times p},\nonumber\\
&\tfhmhomo =\frac{\alpha}{2^{d+s}\l}\|\vm\|_2^2\vh-\frac{\alpha}{\l}\mtrans\tsmmotilde\tfhhotilde,\label{eq:tfhmhomo}\\
&\tshmhomo =\frac{\alpha}{2^{d+s}\l}\|\vh\|_2^2\vm-\frac{\alpha}{\l}\htrans\tfhhotilde\tsmmotilde,\label{eq:tshmhomo}\\
&S_{\epsilon,(\htrue,\mtrue)}^{(1)} = \left\{(\vh,\vm)\in\R^{n\times p}\backslash\mathcal{K} \left|\frac{\|\tfhmhomo\|_2}{\|\vm\|_2 } \leq\frac{\epsilon\max(\|\vh\|_2\|\vm\|_2,\|\htrue\|_2\|\mtrue\|_2)}{2^{d+s}\l}\right.\right\},\nonumber\\
&S_{\epsilon,(\htrue,\mtrue)}^{(2)} = \left\{(\vh,\vm)\in\R^{n\times p}\backslash\mathcal{K} \left|\frac{\|\tshmhomo\|_2}{\|\vh\|_2 } \leq\frac{\epsilon\max(\|\vh\|_2\|\vm\|_2,\|\htrue\|_2\|\mtrue\|_2)}{2^{d+s}\l}\right.\right\}\nonumber,
\end{align}
where $\tsmmotilde$ and $\tfhhotilde$ is as defined in \eqref{eq:tpq}. For brevity of notation, write $\vfhm = \vfhmhomo$, $\vshm =\vshmhomo$, $\thvm = \thmhomo$, $\tfhm = \tfhmhomo$ and $\tshm = \tshmhomo$.

Since $\Wf_i \in\R^{\nhi\times \nhimf}$ for $i=1,\dots,d-1$ and $\Ws_i \in\R^{\pgi\times \pgimf}$ for $i=1,\dots,s-1$ satisfy the WDC with constant $\epsilon$ and $1$, $\Wf_d \in \R^{\l\times n_{d-1} }$ satisfy WDC with constants $\epsilon$ and $\alpha_1$, $\Ws_s \in \R^{\l\times p_{s-1}}$ satisfy WDC with constants $\epsilon$ and $\alpha_2$, and $\left(\Wf_d,\Ws_s\right)$ satisfy joint-WDC with constants $\epsilon$, $\alpha =\alpha_1\cdot\alpha_2$, we have lemma \ref{lem:main_concentration} implying for all nonzero $\vh,\ \htrue \in \R^{n}$ and nonzero $\vm,\ \mtrue \in \R^{p}$
\begin{align}
	\|\nabla_{\vh}f(\vh,\vm) - \tfhm \|_2 \leq K\frac{d^3s^3\sqrt{\epsilon}}{2^{d+s}\l}\max(\|\vh\|_2\|\vm\|_2,\|\htrue\|_2\|\mtrue\|_2)\|\vm\|_2,\label{eq:gradh_con} \\
	\|\nabla_{\vm}f(\vh,\vm) - \tshm\|_2 \leq K\frac{d^3s^2\sqrt{\epsilon}}{2^{d+s}\l}\max(\|\vh\|_2\|\vm\|_2,\|\htrue\|_2\|\mtrue\|_2)\|\vh\|_2\label{eq:gradx_con}.
\end{align}
Thus, we have, for all nonzero $\vh,\ \htrue \in \R^{n}$ and nonzero $\vm,\ \mtrue \in \R^{p}$,
\begin{align*}
	\|\vfhm-\tfhm\|_2 &= \lim_{\delta\rightarrow0^{+}} \|\nabla_{\vh}f\left((\vh,\vx)+\delta\vw\right) -\tfhmdw\|_2\\
	&\leq K\frac{d^3s^3\sqrt{\epsilon}}{2^{d+s}\l}\max(\|\vh\|_2\|\vm\|_2,\|\htrue\|_2\|\mtrue\|_2)\|\vm\|_2, \text{ and}\\
	 \|\vshm-\tshm\|_2 &= \lim_{\delta\rightarrow0^{+}} \|\nabla_{\vm}f\left((\vh,\vx)+\delta\vw\right) -\tshmdw\|_2\\
	&\leq K\frac{d^3s^3\sqrt{\epsilon}}{2^{d+s}\l}\max(\|\vh\|_2\|\vm\|_2,\|\htrue\|_2\|\mtrue\|_2)\|\vh\|_2,
\end{align*}
where the inequalities follow from \eqref{eq:gradh_con} and \eqref{eq:gradx_con}. 

Note that the one-sided directional derivative of $f$ in the direction of $(\vx,\vy)\neq \vzero$ at $(\vh,\vy)$ is $D_{(\vx,\vy)}f(\vh,\vx) = \lim_{t\rightarrow 0^{+}}\frac{1}{t}\left(f((\vh,\vx)+t(\vx,\vy))-f(\vh,\vx)\right)$. Due to the continuity and piecewise linearity of the function
\[
\mathcal{G}(\vh,\vm) = \Lambdah\vh\odot\Lambdam\vm,
\]
 we have that for any $(\vh,\vm)\neq (\vzero,\vzero)$ and $(\vx,\vy)\neq \vzero$ that there exists a sequence $\{(\vh_n,\vm_{n})\}\rightarrow(\vh,\vm)$ such that $f$ is differentiable at each $(\vh_n,\vm_n)$ and $D_{(\vx,\vy)}f(\vh,\vm) = \lim_{n\rightarrow\infty}\nabla f(\vh_n,\vm_n)\cdot(\vx,\vy)$. Thus, as $\nabla f(\vh_n,\vm_n) = \left[\begin{array}{c}\vfhmn\\ \vshmn \end{array}\right]$,
 \begin{align*}
 &D_{-\gfhm}f(\vh,\vm) = \lim_{n\rightarrow\infty}\nabla f(\vh_n,\vm_n) \cdot \frac{-\gfhm}{\|\gfhm\|_2}=\frac{-1}{\|\gfhm\|_2} \lim_{n\rightarrow\infty}\vfhmn\cdot\vfhm,\\
  &D_{-\gshm}f(\vh,\vm) = \lim_{n\rightarrow\infty}\nabla f(\vh_n,\vm_n) \cdot \frac{-\gshm}{\|\gshm\|_2}=\frac{-1}{\|\gshm\|_2} \lim_{n\rightarrow\infty}\vshmn\cdot\vshm.
 \end{align*}
 Now, we write
 \begin{align*}
 &\vfhmn\cdot\vfhm\\
 =& \tfhmn\cdot\tfhm+(\vfhmn-\tfhmn)\cdot\tfhm+\tfhmn\cdot(\vfhm-\tfhm)\\
 &+(\vfhmn-\tfhmn)\cdot(\vfhm-\tfhm)\\
 \geq&\tfhmn\cdot\tfhm-\|\vfhmn-\tfhmn\|_2\|\tfhm\|_2-\|\tfhmn\|_2\|\vfhm-\tfhm\|_2\\
 &\|\vfhmn-\tfhmn\|_2\|\vfhm-\tfhm\|_2\\
 \geq&\tfhmn\cdot\tfhm-K\frac{d^3s^3\sqrt{\epsilon}}{2^{d+s}\l}\max(\|\vh_n\|_2\|\vm_n\|_2,\|\htrue\|_2\|\mtrue\|_2)\|\vm_n\|_2\|\tfhm\|_2\\
 &-K\frac{d^3s^3\sqrt{\epsilon}}{2^{d+s}\l}\max(\|\vh\|_2\|\vm\|_2,\|\htrue\|_2\|\mtrue\|_2)\|\vm\|_2\|\tfhmn\|_2\\
 &-\left(K\frac{d^3s^3\sqrt{\epsilon}}{2^{d+s}\l}\right)^2\max(\|\vh_n\|_2\|\vm_n\|_2,\|\htrue\|_2\|\mtrue\|_2)\\
 &\max(\|\vh\|_2\|\vm\|_2,\|\htrue\|_2\|\mtrue\|_2)\|\vm_n\|_2\|\vm\|_2.
 \end{align*}
 As $\tfhm$ is continuous in $(\vh,\vm)$ for all $(\vh,\vm)\notin \mathcal{K}$, we have for all $(\vh,\vm)\notin \mathcal{S}_{4Kd^3s^3\sqrt{\epsilon},(\htrue,\mtrue)}^{(1)}\cup\mathcal{K}$,
 \begin{align}
 	&\lim_{n\rightarrow\infty}\vfhmn\cdot\vfhm\notag\\
 	\geq& \|\tfhm\|_2^2-2K\frac{d^3s^3\sqrt{\epsilon}}{2^{d+s}\l}\max(\|\vh\|_2\|\vm\|_2,\|\htrue\|_2\|\mtrue\|_2)\|\vm\|_2\|\tfhm\|_2\notag\\
 &-\left(K\frac{d^3s^3\sqrt{\epsilon}}{2^{d+s}\l}\max(\|\vh\|_2\|\vm\|_2,\|\htrue\|_2\|\mtrue\|_2)\|\vm\|_2\right)^2\notag\\
 \geq& \frac{\|\tfhm\|_2}{2}\Big[\|\tfhm\|_2-4K\frac{d^3s^3\sqrt{\epsilon}}{2^{d+s}\l}\max(\|\vh\|_2\|\vm\|_2,\|\htrue\|_2\|\mtrue\|_2)\|\vm\|_2\Big]+\notag\\
 &\frac{1}{2}\Big[\|\tfhm\|_2^2-2\left(K\frac{d^3s^3\sqrt{\epsilon}}{2^{d+s}\l}\max(\|\vh\|_2\|\vm\|_2,\|\htrue\|_2\|\mtrue\|_2)\|\vm\|_2\right)^2\Big]\notag\\
 >&0\label{eq:directional_first}.
 \end{align} 
 Similarly, we have for all $(\vh,\vm)\notin \mathcal{S}_{4Kd^3s^3\sqrt{\epsilon},(\htrue,\mtrue)}^{(2)}\cup\mathcal{K}$,
 \begin{align}
 	&\lim_{n\rightarrow\infty}\vshmn\cdot\vshm>0\label{eq:directional_second} .
 \end{align} 
 So, for all $(\vh,\vm)\notin \left(\mathcal{S}_{4Kd^3s^3\sqrt{\epsilon},(\htrue,\mtrue)}^{(1)}\cap \mathcal{S}_{4Kd^3s^3\sqrt{\epsilon},(\htrue,\mtrue)}^{(2)} \right)\cup\mathcal{K}$, at least \eqref{eq:directional_first} or \eqref{eq:directional_second} holds. If \eqref{eq:directional_first} holds, then we have $D_{-\gfhm}f(\vh,\vm)<0$ and if \eqref{eq:directional_second} holds, then we have $D_{-\gshm}f(\vh,\vm)\\<0$

It remain to prove that for all $(\vh,\vm) \in \mathcal{K}$ and for all $(\vx,\vy) \in \R^{n\times p}$, $D_{(\vx,\vy)}f(\vh,\vm)\leq 0$. We first assume $\vh = \vzero$ and $\vm$ is arbitrary. Let
\begin{align*}
\htruetilde = \Lambdahodmf \htrue ,\quad\xtilde = \Lambdaxdmf\vx,\quad\mtilde=\Lambdamdmf\vm,\quad\mtruetilde=\Lambdamodmf\mtrue,
\end{align*}
$\thetaibark = g(\thetaimbark)$ for $g$ given in \eqref{eq:g}, $\thetaobarf = \angle(\vx,\htrue)$ and $\thetaobars = \angle(\vm,\mtrue)$. We compute
\begin{align*}
	&-D_{(\vx,\vy)}f(\vh,\vm)\cdot\|(\vx,\vy)\|_2\\
	&=\lim_{t\rightarrow0^+}\frac{f((\vh,\vm)+t(\vx,\vy))-f(\vh,\vm)}{t}\\
	&= \left<\diag\left(\Lambdam \vm\right)\Lambdax\vx,\diag\left(\Lambdamo \mtrue \right)\Lambdaho\htrue \right>\\
	& = \left<\xtilde,\left(\Wdmfm \right)^\intercal\diag\left(\Wdmfx\mtilde\odot\Wdmfxo \mtruetilde \right)\Wdmfho \htruetilde \right>\\
	& = \Big<\xtilde,\Big(\left(\Wdmfm \right)^\intercal\diag\left(\Wdmfx\mtilde\odot\Wdmfxo \mtruetilde \right)\Wdmfho\\
	&\quad- \frac{\alpha}{n}\mQ_{\xtilde,\htruetilde}\mtildet \mQ_{\mtilde,\mtruetilde}\mtruetilde \Big)\htruetilde \Big>+\frac{\alpha}{n}\mtildet \mQ_{\mtilde,\mtruetilde}\mtruetilde \cdot\xtildet \mQ_{\xtilde,\htruetilde}\htruetilde\\
	&\geq-\Big\|\left(\Wdmfm \right)^\intercal\diag\left(\Wdmfx\mtilde\odot\Wdmfxo \mtruetilde \right)\Wdmfho\\
	&\quad- \frac{\alpha}{n}\mQ_{\xtilde,\htruetilde}\mtildet \mQ_{\mtilde,\mtruetilde}\mtruetilde \Big\| \|\xtilde\|_2\|\htruetilde\|_2+\frac{\alpha}{4n}\left(\frac{(\pi -\thetadmfbarf)\cos\thetadmfbarf+\sin\thetadmfbarf}{\pi}\right)\\
	&\quad\left(\frac{(\pi -\thetadmfbars)\cos\thetadmfbars+\sin\thetadmfbars}{\pi}\right)\|\xtilde\|\htruetilde\|\|\mtilde\|\|\mtruetilde\|\\
	&\geq - \frac{4\epsilon}{n}\|\mtilde\|_2\|\mtruetilde\|_2\|\xtilde\|_2\|\htruetilde\|_2 + \frac{\alpha}{4n}\|\xtilde\|\htruetilde\|\|\mtilde\|\|\mtruetilde\|\cos\thetadbarf\cos\thetadbars\\
	&\geq \left(- \frac{4\epsilon}{n}+ \frac{\alpha}{4\pi^2n}\right)\|\xtilde\|\htruetilde\|\|\mtilde\|\|\mtruetilde\|.
\end{align*}
So, if $4\pi^2\epsilon/\alpha<1$, then $D_{(\vx,\vy)}f(\vh,\vm)\cdot\|(\vx,\vy)\|_2 \leq 0$ for all $(\vx,\vy)\in\R^{n\times p}$ and $(\vh,\vm) \in \{(\vh,\vm)\left|\vh = \vzero,\ \vm\in\R^{p}\right.\}$. Similarly, $D_{(\vx,\vy)}f(\vh,\vm)\cdot\|(\vx,\vy)\|_2 \leq 0$ for all $(\vx,\vy)\in\R^{n\times p}$ and $(\vh,\vm) \in \{(\vh,\vm)\left|\vh \in \R^{n},\ \vm=\vzero\right.\}$.

Let $\mathcal{S} = \mathcal{S}_{4Kd^3s^3\sqrt{\epsilon},(\htrue,\mtrue)}^{(1)}\cap \mathcal{S}_{4Kd^3s^3\sqrt{\epsilon},(\htrue,\mtrue)}^{(2)}$. The proof is finished by applying Lemma \ref{lem:control_zeros} and $38(d^5+s^5)\sqrt{4Kd^3s^3\sqrt{\epsilon}}/\alpha<1$ to get 
 \begin{align*}
 \mathcal{S}\subseteq &\mathcal{A}_{\tilde{K}\frac{d^3s^3\epsilon^{1/4}}{\alpha},(\htrue,\mtrue)}\cup \mathcal{A}_{\tilde{K}\frac{d^8s^3\epsilon^{1/4}}{\alpha},\left(-\rhof\htrue,\mtrue\right)}\cup \mathcal{A}_{\tilde{K}\frac{d^3s^8\epsilon^{1/4}}{\alpha},\left(\rhos\htrue,-\mtrue\right)}\\
 &\cup \mathcal{A}_{\tilde{K}\frac{d^8s^8\epsilon^{1/4}}{\alpha},\left(-\rhof\rhos\htrue,-\mtrue\right)},
\end{align*}
for some absolute constant $\tilde{K}$.
\end{proof}

\subsection{Concentration of terms in $\tilde{\vg}_{1,(\vh,\vm)}$ and $\tilde{\vg}_{2,(\vh,\vm)}$}

\begin{lemma}\label{lem:concentration_no_comp}
Fix $0<\epsilon<d^{-4}/(16\pi)^2$ and $d\geq 2$. Suppose that $\mW_i \in\R^{n_{i}\times n_{i-1}}$ satisfies the WDC with constant $\epsilon$ and $1$ for $i=1,\dots,d$. Define
\[
\tilde{t}_{\vp,\vq} = \frac{1}{2^d}\left[\left(\prod_{i=0}^{d-1}\frac{\pi-\thetaibar}{\pi}\right)\vq+\sum_{i=0}^{d-1}\frac{\sin\thetaibar}{\pi}\left(\prod_{j=i+1}^{d-1}\frac{\pi-\thetajbar}{\pi}\right)\frac{\|\vq\|_2}{\|\vp\|_2}\vp \right],
\]
where $\thetaibar = g(\thetaimbar)$ for $g$ given by \eqref{eq:g} and $\thetaobar = \angle(\vp,\vq)$. For all $\vp\neq 0$ and $\vq\neq 0$,
\begin{align}
	&\left\|\left(\prod_{i=d}^{1}\mW_{i,+,\vp}\right)^\intercal\left(\prod_{i=d}^{1}\mW_{i,+,\vq}\right)\vq -\tilde{t}_{\vp,\vq}\right\|_2\leq24\frac{d^3\sqrt{\epsilon}}{2^d}\|\vq\|_2.
\end{align}
\end{lemma}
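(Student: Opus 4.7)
The plan is to prove the bound by induction on $d$, by matching a one-step recursion satisfied by the partial products with the analogous recursion satisfied by the closed form. For $k=0,\ldots,d$, set $\vp_k := \bigl(\prod_{i=k}^{1}\mW_{i,+,\vp}\bigr)\vp$, $\vq_k := \bigl(\prod_{i=k}^{1}\mW_{i,+,\vq}\bigr)\vq$, and $\vt_k := \bigl(\prod_{i=k}^{1}\mW_{i,+,\vp}\bigr)^{\!\intercal}\vq_k$, so that $\vt_d$ is the object in the lemma. Write $\theta_k := \angle(\vp_k,\vq_k)$ and let $\bar\theta_k$ be the deterministic sequence $\bar\theta_k = g(\bar\theta_{k-1})$ with $\bar\theta_0=\angle(\vp,\vq)$. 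An elementary rearrangement of \eqref{eq:tpq} shows that the closed form $\tilde t_{\vp,\vq}^{(k)}$ satisfies the exact recursion
\begin{equation*}
\tilde t_{\vp,\vq}^{(k)} \;=\; \frac{\pi-\bar\theta_{k-1}}{2\pi}\,\tilde t_{\vp,\vq}^{(k-1)} \;+\; \frac{\sin\bar\theta_{k-1}}{2\pi}\cdot\frac{\|\vq\|_2}{\|\vp\|_2}\cdot\frac{\vp}{2^{k-1}},\qquad \tilde t_{\vp,\vq}^{(0)}=\vq,
\end{equation*}
so the strategy reduces to deriving an approximate version of the same recursion for $\vt_k$ with small per-layer residual, then summing.

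To peel off the $k$-th layer I would apply the WDC for $\mW_k$ at the pair $(\vp_{k-1},\vq_{k-1})$ to write $\mW_{k,+,\vp_{k-1}}^{\intercal}\mW_{k,+,\vq_{k-1}} = \mQ_{\vp_{k-1},\vq_{k-1}} + \mE_k$ with $\|\mE_k\|\le\epsilon$. Combined with the direct computation $\mQ_{\vp_{k-1},\vq_{k-1}}\vq_{k-1} = \tfrac{\pi-\theta_{k-1}}{2\pi}\vq_{k-1} + \tfrac{\sin\theta_{k-1}}{2\pi}\tfrac{\|\vq_{k-1}\|_2}{\|\vp_{k-1}\|_2}\vp_{k-1}$, and using both $\bigl(\prod_{i=k-1}^{1}\mW_{i,+,\vp}\bigr)^{\!\intercal}\vq_{k-1} = \vt_{k-1}$ and $\bigl(\prod_{i=k-1}^{1}\mW_{i,+,\vp}\bigr)^{\!\intercal}\vp_{k-1} = \bigl(\prod_{i=k-1}^{1}\mW_{i,+,\vp}\bigr)^{\!\intercal}\bigl(\prod_{i=k-1}^{1}\mW_{i,+,\vp}\bigr)\vp$, this yields an exact one-step identity for $\vt_k$ in terms of $\vt_{k-1}$, $\vp$, and a residual containing $\mE_k$. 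To line this up with the recursion for $\tilde t_{\vp,\vq}^{(k)}$ I need four standard WDC-based auxiliary estimates: (i) norm propagation $\|\vp_k\|_2 = 2^{-k/2}\|\vp\|_2(1+O(k\epsilon))$, and similarly for $\vq_k$; (ii) angle propagation $|\theta_k-\bar\theta_k|\le O(k\sqrt\epsilon)$; (iii) self-product concentration $\bigl(\prod_{i=k-1}^{1}\mW_{i,+,\vp}\bigr)^{\!\intercal}\bigl(\prod_{i=k-1}^{1}\mW_{i,+,\vp}\bigr) = 2^{-(k-1)}\mI + O(k\epsilon/2^{k-1})$, obtained by iterating $\mW_{i,+,\vp}^\intercal\mW_{i,+,\vp} = \tfrac12\mI + O(\epsilon)$; and (iv) the operator-norm bound $\|\prod_{i=k-1}^{1}\mW_{i,+,\vp}\|\le 2^{-(k-1)/2}(1+O(\sqrt\epsilon))^{k-1}$ to handle the residual coming from $\mE_k$. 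All four are of the same type as those established in the Hand--Voroninski framework directly from the WDC via sphere-covering and angle-tracking arguments.

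Plugging these estimates into the exact one-step identity produces
\begin{equation*}
\vt_k = \frac{\pi-\bar\theta_{k-1}}{2\pi}\vt_{k-1} + \frac{\sin\bar\theta_{k-1}}{2\pi}\cdot\frac{\|\vq\|_2}{\|\vp\|_2}\cdot\frac{\vp}{2^{k-1}} + O_1\bigl(C k^2\sqrt\epsilon\,\|\vq\|_2/2^k\bigr).
\end{equation*}
Subtracting from the exact recursion above and setting $\ve_k := \vt_k - \tilde t_{\vp,\vq}^{(k)}$ gives $\ve_k = \tfrac{\pi-\bar\theta_{k-1}}{2\pi}\ve_{k-1} + O_1(Ck^2\sqrt\epsilon\|\vq\|_2/2^k)$ with $\ve_0 = \vzero$. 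Since the prefactor is at most $1/2$, unrolling and summing the residuals gives $\|\ve_d\|_2 \le \sum_{k=1}^{d} 2^{-(d-k)}\cdot C k^2\sqrt\epsilon\|\vq\|_2/2^k \le 24 d^3\sqrt\epsilon\|\vq\|_2/2^d$ after fixing constants, which is the claimed bound. The main obstacle is the bookkeeping in the auxiliary estimates, especially (iii): because each per-layer WDC leaves an additive error of size $\epsilon$, one must verify that propagating these errors through $k-1$ transposed layers produces growth that is only polynomial in $k$ rather than exponential, and the saving $1/2$ in the recursion's prefactor is precisely what cancels the $\sqrt{2}$ amplification from transposed layer-by-layer products, leaving the clean $d^3$ dependence.
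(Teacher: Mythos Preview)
Your proposal is correct and is essentially the argument the paper relies on: the paper does not give its own proof of this lemma but simply refers to \cite{Hand2017DCS}, and what you have sketched is precisely the layer-peeling induction used there (WDC at each layer to linearize $\mW_{k,+,\vp_{k-1}}^\intercal\mW_{k,+,\vq_{k-1}}$, angle and norm propagation to replace $(\theta_{k-1},\|\vp_{k-1}\|,\|\vq_{k-1}\|)$ by $(\bar\theta_{k-1},2^{-(k-1)/2}\|\vp\|,2^{-(k-1)/2}\|\vq\|)$, then a contracting error recursion). There is nothing to compare; your route and the cited proof coincide.
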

We refer the readers to \cite{Hand2017DCS} for proof of Lemma \ref{lem:concentration_no_comp}. We now state a related Lemma.
\begin{lemma}\label{lem:main_concentration}
Fix $0<\epsilon<1/((d^4+s^4)16\pi)^2$, $d\geq 2$ and $s\geq 2$. Suppose that $\Wf_i \in\R^{\nhi\times \nhimf}$ for $i=1,\dots,d-1$ and $\Ws_i \in\R^{\pgi\times \pgimf}$ for $i=1,\dots,s-1$ satisfy the WDC with constant $\epsilon$ and $1$. Suppose $\Wf_d \in \R^{\l\times n_{d-1} }$ satisfy WDC with constants $\epsilon$ and $\alpha_1$, and $\Ws_s \in \R^{\l\times p_{s-1}}$ satisfy WDC with constants $\epsilon$ and $\alpha_2$. Also, suppose $\left(\Wf_d,\Ws_s\right)$ satisfy pair-WDC with constants $\epsilon$, $\alpha =\alpha_1\cdot\alpha_2$. Define
\[
\tilde{t}_{\vp,\vq}^{(k)} = \frac{1}{2^\ak}\left[\left(\prod_{i=0}^{\ak-1}\frac{\pi-\thetaibark}{\pi}\right)\vq+\sum_{i=0}^{\ak-1}\frac{\sin\thetaibark}{\pi}\left(\prod_{j=i+1}^{\ak-1}\frac{\pi-\thetajbark}{\pi}\right)\frac{\|\vq\|_2}{\|\vp\|_2}\vp \right],
\]
where $\thetaibark = g(\thetaimbark)$ for $g$ given by \eqref{eq:g}, $\thetaobark = \angle(\vp,\vq)$, $\af=d$, and $\as = s$. For all $\vh\neq0$, $\vx\neq0$, $\vm\neq0$ and $\vy\neq0$, 
\begin{align}
	&\left\|\left(\Lambdah \right)^\intercal\diag\left(\Lambdam \vm \odot\Lambday \vy \right)\Lambdax \vx-\frac{\alpha}{n}\left(\mtrans\tilde{\vt}_{\vm,\vy}^{(2)}\right) \tilde{\vt}_{\vh,\vx}^{(1)}\right\|_2\notag\\
	&\hskip9cm\leq\frac{208d^3s^3\sqrt{\epsilon}}{2^{d+s}\l}\|\vx\|_2\|\vm\|_2\|\vy\|_2,\label{eq:pairWDC_concen1} \\
	&\left\|\left(\Lambdam \right)^\intercal\diag\left(\Lambdah \vh \odot\Lambdax \vx \right)\Lambday \vy-\frac{\alpha}{n}\left(\htrans\tilde{\vt}_{\vh,\vx}^{(1)}\right) \tilde{\vt}_{\vm,\vy}^{(2)} \right\|_2\notag\\
	&\hskip9cm\leq\frac{208d^3s^3\sqrt{\epsilon}}{2^{d+s}\l}\|\vy\|_2\|\vh\|_2\|\vx\|_2\label{eq:pairWDC_concen2}.
\end{align}
\end{lemma}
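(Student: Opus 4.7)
The plan is to peel off the outermost weight matrix on each of the two branches using the joint-WDC hypothesis \eqref{eq:pair-WDC1}, reduce to single-branch concentration on the remaining $d-1$ and $s-1$ inner layers via Lemma~\ref{lem:concentration_no_comp}, and then verify that the resulting expression exactly matches the target rank-one product $\tfrac{\alpha}{\l}\bigl(\mtrans\tilde{\vt}_{\vm,\vy}^{(2)}\bigr)\tilde{\vt}_{\vh,\vx}^{(1)}$ via the one-step recursion encoded in the angle function $g$.

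First I would introduce the inner-layer vectors $\htilde = \Lambdahdmf \vh$, $\xtilde = \Lambdaxdmf \vx$, $\mtilde = \Lambdamdmf \vm$, and $\ytilde = \Lambdaydmf \vy$. Since $\Wf_{d,+,\vh}$ depends on $\vh$ only through $\htilde$, and similarly for the other three vectors, writing $\mB = \Wf_d$ and $\mC = \Ws_s$ the left-hand side of \eqref{eq:pairWDC_concen1} factors as
\[
(\Lambdahdmf)^\intercal \bigl[\,\mB_{+,\htilde}^\intercal \diag\bigl(\mC_{+,\mtilde}\mtilde \odot \mC_{+,\ytilde}\ytilde\bigr)\mB_{+,\xtilde}\,\bigr]\xtilde.
\]
Applying \eqref{eq:pair-WDC1} to the bracketed factor replaces it with the rank-one matrix $\tfrac{\alpha}{\l}\bigl(\mtildet \mQ_{\mtilde,\ytilde}\ytilde\bigr)\mQ_{\htilde,\xtilde}$, up to a spectral-norm error of order $\tfrac{\epsilon}{\l}\|\mtilde\|\|\ytilde\|$. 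This reduces the problem to estimating a scalar factor $\mtildet\mQ_{\mtilde,\ytilde}\ytilde$ and a vector factor $(\Lambdahdmf)^\intercal \mQ_{\htilde,\xtilde}\xtilde$ separately.

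For the vector factor, I would expand $\mQ_{\htilde,\xtilde}\xtilde$ as a linear combination of $\xtilde$ and $\htilde$ and apply Lemma~\ref{lem:concentration_no_comp} (in depth $d-1$) to approximate $(\Lambdahdmf)^\intercal \xtilde$ by the depth-$(d-1)$ analogue of $\tilde{\vt}^{(1)}_{\vh,\vx}$ and $(\Lambdahdmf)^\intercal \htilde$ by $\tfrac{1}{2^{d-1}}\vh$ (the case $\vp=\vq$, for which all angles are zero). The two contributions telescope into $\tilde{\vt}^{(1)}_{\vh,\vx}$ because the one-step recursion for the coefficients of $\vx$ and of $\tfrac{\|\vx\|_2}{\|\vh\|_2}\vh$ in $\tilde{\vt}^{(1)}$ is exactly $\text{coef}_d = \tfrac{\pi-\thetadmfbarf}{\pi}\,\text{coef}_{d-1} + (\text{sine term})$, which is precisely the linear combination produced by $\mQ_{\htilde,\xtilde}$ and the angle update $\thetadbarf = g(\thetadmfbarf)$. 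For the scalar factor, the identity $\cos g(\theta) = \tfrac{(\pi-\theta)\cos\theta+\sin\theta}{\pi}$ gives the exact equality $\mtildet\mQ_{\mtilde,\ytilde}\ytilde = \tfrac{\|\mtilde\|_2\|\ytilde\|_2}{2}\cos g\bigl(\angle(\mtilde,\ytilde)\bigr)$; the WDC-based angle approximation $\angle(\mtilde,\ytilde) \approx \thetasmfbars$ and norm approximations $\|\mtilde\|_2\approx 2^{-(s-1)/2}\|\vm\|_2$, $\|\ytilde\|_2\approx 2^{-(s-1)/2}\|\vy\|_2$ then yield $\mtildet\mQ_{\mtilde,\ytilde}\ytilde \approx \tfrac{\|\vm\|_2\|\vy\|_2\cos\thetasbars}{2^s}$, which a short induction on $s$ shows equals $\mtrans\tilde{\vt}^{(2)}_{\vm,\vy}$.

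Combining the three approximations with the joint-WDC error term $(\Lambdahdmf)^\intercal E\xtilde$, controlled via the standard operator-norm bound $\|\Lambdahdmf\|\le 2^{-(d-1)/2}(1+O(d\sqrt\epsilon))$ from the WDC on inner layers, yields \eqref{eq:pairWDC_concen1}. The companion inequality \eqref{eq:pairWDC_concen2} follows by the symmetric argument, swapping the roles of the two branches and using \eqref{eq:pair-WDC2} in place of \eqref{eq:pair-WDC1}. The hard part will be the error bookkeeping: the total error must be expanded as (scalar approximation)$\times$(vector approximation) plus cross terms, each of which carries $O(d^3)$ or $O(s^3)$ polynomial factors from iterating the angle and norm recursions, and one must show that these terms collapse into the claimed $\tfrac{208\,d^3 s^3\sqrt\epsilon}{2^{d+s}\l}\|\vx\|_2\|\vm\|_2\|\vy\|_2$ bound without compounding worse than the stated polynomial rate.
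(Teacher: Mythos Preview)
Your proposal is correct in spirit and would succeed, but it takes a different tack from the paper for the two ``peeled'' factors, and that difference is worth pointing out because the paper's route avoids exactly the bookkeeping you flag as hard.

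For both the vector factor $(\Lambdahdmf)^\intercal\mQ_{\htilde,\xtilde}\,\xtilde$ and the scalar factor $\mtildet\mQ_{\mtilde,\ytilde}\ytilde$, the paper does \emph{not} expand $\mQ$ and then invoke Lemma~\ref{lem:concentration_no_comp} at depth $d-1$ (resp.\ $s-1$) together with separate angle and norm propagation estimates. Instead it uses the WDC on the outermost layer in reverse: since $\tfrac{1}{\sqrt{\alpha_1}}\Wf_d$ satisfies the WDC with constant $\epsilon/\alpha_1$ and $1$, one has $\mQ_{\htilde,\xtilde}=\tfrac{1}{\alpha_1}(\Wdh)^\intercal\Wdx+O(\epsilon/\alpha_1)$, so the vector factor becomes $\tfrac{1}{\alpha_1}(\Lambdah)^\intercal\Lambdax\vx$ plus a small error, and Lemma~\ref{lem:concentration_no_comp} applies directly at full depth $d$ to yield $\tilde{\vt}^{(1)}_{\vh,\vx}$. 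The scalar factor is handled identically with $\Ws_s$ and depth $s$. This ``re-insert the outer layer'' trick eliminates any need to approximate $\angle(\htilde,\xtilde)$ by $\bar\theta^{(1)}_{d-1}$ or $\|\xtilde\|/\|\htilde\|$ by $\|\vx\|/\|\vh\|$, and the telescoping you plan to verify by hand is absorbed into the existing statement of Lemma~\ref{lem:concentration_no_comp}.

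Your route is still viable: the needed angle and norm approximations under WDC are available (they underlie the proof of Lemma~\ref{lem:concentration_no_comp} itself in \cite{Hand2017DCS}), and the recursion identity $\cos\bar\theta^{(k)}_{a^{(k)}}=\xi^{(k)}\cos\bar\theta^{(k)}_0+\zeta^{(k)}$ you cite for the scalar is exactly \eqref{eq:ubound_zetak}. But you would be re-deriving pieces of Lemma~\ref{lem:concentration_no_comp} rather than invoking it, and the accumulated error constants are harder to track. The paper's trick buys a cleaner three-term triangle-inequality split (joint-WDC error, scalar error, vector error) with each term controlled by a single black-box lemma.
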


\begin{proof}
We will prove \eqref{eq:pairWDC_concen1}. Proof of \eqref{eq:pairWDC_concen2} is identical to proof of \eqref{eq:pairWDC_concen1}. Define $\htrue = \vh$, $\xtrue=\vx$, $\mtrue = \vm$, $\ytrue = \vy$,
\begin{align*}
	\vh_d := \left(\prodWh\right)\vh &=\left(\Wdh\Wdmfh\dots\Wfh\right)\vh\\&=\Wdh\vh_{d-1}\\&=(\mW_d^{(1)})_{+,\vh_{d-1}}\vh_{d-1},
\end{align*}
and analogously $\vx_d = \left(\prodWx\right)\vx$, $\vm_s = \left(\prodWm\right)\vm$, and $\vy_s = \left(\prodWy\right)\vy$. By the WDC, we have for all $\vh\neq \vzero$, $\vm\neq \vzero$,
\begin{align}
	& \left\|\left(\Wf_{i}\right)_{+,\vh}^\intercal\left(\Wf_{i}\right)_{+,\vh}-\frac{1}{2}\mI_{\nhimf} \right\|\leq \epsilon \text{ for all } i = 1,\dots, d-1, \text{ and}\\
	&\left\|\left(\Ws_{i}\right)_{+,\vm}^\intercal\left(\Ws_{i}\right)_{+,\vm}-\frac{1}{2}\mI_{\pgimf}\right\|\leq\epsilon\text{ for all } i = 1,\dots, s-1.
\end{align}
In particular, $\left\|\left(\Wih\right)^\intercal\Wih -\frac{1}{2}\mI_{\nhimf}\right\|\leq \epsilon$ and $\left\|\left(\Wim\right)^\intercal\Wim -\frac{1}{2}\mI_{\pgimf}\right\|\leq\epsilon$.
and consequently,
\begin{align*}
&\frac{1}{2}-\epsilon\leq\left\|\Wih \right\|^2\leq\frac{1}{2}+\epsilon\\	
&\frac{1}{2}-\epsilon\leq\left\|\Wim \right\|^2\leq\frac{1}{2}+\epsilon.
\end{align*}
Hence,
\begin{align}\label{eq:sizeofLambdas}
\left\|\prodWhdmf \right\|\left\|\prodWxdmf\right\| \leq\frac{1}{2^{d-1}}\left(1+2\epsilon\right)^{d-1} = \frac{1}{2^{d-1}}e^{(d-1)\log(1+2\epsilon)}\leq \frac{1+4\epsilon(d-1)}{2^{d-1}},
\end{align}
where we used that $\log(1+z)\leq z$, $e^z\leq 1+2z$ for $z<1$, and $2(d-1)\epsilon\leq 1$. Similarly,
\begin{align}\label{eq:sizeofLambdasGm}
\left\|\prodWmsmf \right\|\left\|\prodWysmf \right\| \leq\frac{1+4\epsilon(s-1)}{2^{s-1}}.
\end{align}

Let
\begin{align*}
\htilde = \Lambdahdmf\vh,\quad\xtilde = \Lambdaxdmf\vx,\quad\mtilde=\Lambdamdmf\vm,\quad\ytilde=\Lambdaydmf\vy,
\end{align*}
and consider
\begin{align}
&\left\|\left(\Lambdah\right)^\intercal\diag\left(\Lambdam\vm \odot\Lambday\vy \right)\Lambdax\vx-\frac{\alpha}{\l}\left(\mtrans\tilde{t}_{\vm,\vy}^{(2)}\right) \tilde{t}_{\vh,\vx}^{(1)}\right\|_2\notag\\
\leq &\Big\|\left(\Lambdahdmf \right)^\intercal\Big(\left(\Wdh\right)^\intercal\diag\Big(\Wdm\mtilde\odot\Wdy\ytilde\Big)\Wdx-\frac{\alpha}{\l}\mQ_{\htilde,\xtilde}\notag\\
&\quad\mtildet \mQ_{\mtilde,\ytilde}\ytilde\Big)\Lambdaxdmf\vx\Big\|_2+\Big\|\frac{\alpha}{\l}\left(\Lambdahdmf \right)^\intercal\mQ_{\htilde,\xtilde}\mtildet\mQ_{\mtilde,\ytilde}\ytilde\Lambdaxdmf\vx\notag\\
&\quad-\frac{\alpha}{\l}\left(\mtrans\tilde{t}_{\vm,\vy}^{(2)}\right) \tilde{t}_{\vh,\vx}^{(1)}\Big\|_2\notag\\
\leq &\Big\|\left(\Lambdahdmf \right)^\intercal\Big(\left(\Wdh\right)^\intercal\diag\Big(\Wdm\mtilde\odot\Wdy\ytilde\Big)\Wdx-\frac{\alpha}{\l}\mQ_{\htilde,\xtilde}\notag\\
&\quad\mtildet \mQ_{\mtilde,\ytilde}\ytilde\Big)\Lambdaxdmf\vx\Big\|_2\notag\\
&\quad+\frac{\alpha}{\l}\Big\|\mtildet\mQ_{\mtilde,\ytilde}\ytilde\left(\Lambdahdmf \right)^\intercal\mQ_{\htilde,\xtilde}\Lambdaxdmf\vx-\mtrans \tilde{t}_{\vm,\vy}^{(2)}\left(\Lambdahdmf \right)^\intercal\mQ_{\htilde,\xtilde}\Lambdaxdmf\vx\Big\|_2\notag\\
&\quad+\frac{\alpha}{\l}\Big\|\mtrans\tilde{t}_{\vm,\vy}^{(2)}\left(\Lambdahdmf \right)^\intercal\mQ_{\htilde,\xtilde}\Lambdaxdmf\vx-\mtrans\tilde{t}_{\vm,\vy}^{(2)}\tilde{t}_{\vh,\vx}^{(1)}\Big\|_2\label{eq:eqn1}
\end{align}
where both the first and second inequality holds because of triangle inequality. We bound the terms in the inequality above separately. First consider
\begin{align}
&\Big\|\left(\Lambdahdmf \right)^\intercal\Big(\left(\Wdh\right)^\intercal\diag\Big(\Wdm\mtilde\odot\Wdy\ytilde\Big)\Wdx-\frac{\alpha}{\l}\mQ_{\htilde,\xtilde}\notag\\
&\quad\mtildet \mQ_{\mtilde,\ytilde}\ytilde\Big)\Lambdaxdmf\vx\Big\|_2\notag\\
\leq&\Big\|\left(\Wdh\right)^\intercal\diag\Big(\Wdm\mtilde \odot\Wdy\ytilde\Big)\Wdx-\frac{\alpha}{\l}\mQ_{\htilde,\xtilde}\mtildet\mQ_{\mtilde,\ytilde}\ytilde\Big)\Big\|\notag\\
&\quad\left\|\Lambdahdmf\right\|\left\|\Lambdaxdmf\right\|\left\|\vx\right\|_2\notag\\
\leq &\left(\frac{1+4\epsilon(d-1)}{2^{d-1}}\right)\frac{4\epsilon}{\l}\|\vx\|_2\|\mtilde\|_2\|\ytilde\|_2\notag\\
= &\frac{\left(1+4\epsilon(d-1)\right)}{2^{d}}\frac{8\epsilon}{\l}\|\vx\|_2\|\Lambdamdmf\vm \|_2\|\Lambdaydmf\vy \|_2\notag\\
\leq &\frac{\left(1+4\epsilon(d-1)\right)}{2^{d}}\frac{\left(1+4\epsilon(s-1)\right)}{2^{s}}\frac{16\epsilon}{\l}\|\vx\|_2\|\vm \|_2\|\vy \|_2\notag\\
\leq&\frac{64\epsilon}{2^{d+s}\l}\|\vx\|_2\|\vm\|_2\|\vy\|_2.\label{eq:eqn2}
\end{align}
where the first inequality holds because spectral norm is a sub-multiplicative norm. The second inequality holds because of \eqref{eq:sizeofLambdas} and joint-WDC. The last inequality holds if $4\epsilon(d-1)<1$ and $4\epsilon(s-1)<1$.

Second, consider
\begin{align}
&\Big\|\mtildet\mQ_{\mtilde,\ytilde}\ytilde\left(\Lambdahdmf \right)^\intercal\mQ_{\htilde,\xtilde}\Lambdaxdmf\vx-\mtrans \tilde{t}_{\vm,\vy}^{(2)}\left(\Lambdahdmf \right)^\intercal\mQ_{\htilde,\xtilde}\Lambdaxdmf\vx\Big\|_2\notag\\
=&\Big\|\Big(\mtildet\mQ_{\mtilde,\ytilde}\ytilde-\mtrans \tilde{t}_{\vm,\vy}^{(2)}\Big)\left(\Lambdahdmf \right)^\intercal\mQ_{\htilde,\xtilde}\Lambdaxdmf\vx\Big\|_2\notag\\
\leq&\frac{1+4\epsilon(d-1)}{2^{d-1}}\|\mQ_{\htilde,\xtilde}\|\Big\|\left(\Lambdamdmf\right)^\intercal\mQ_{\mtilde,\ytilde}\Lambdaydmf\vy-\tilde{t}_{\vm,\vy}^{(2)}\Big\|_2\|\vx\|_2\|\|\vm\|_2\notag\\
=&\frac{1+4\epsilon(d-1)}{2^{d}}\Big\|\left(\Lambdamdmf\right)^\intercal \left(\mQ_{\mtilde,\ytilde}-\frac{1}{\alpha_2}\left(\Wdm\right)^\intercal\Wdy\right)\Lambdaydmf\vy\notag\\
&\quad+\frac{1}{\alpha_2}\left(\Lambdam\right)^\intercal\Lambday\vy -\tilde{t}_{\vm,\vy}^{(2)}\Big\|_2\|\vx\|_2\|\vm\|_2\notag\\
\leq&\frac{1+4\epsilon(d-1)}{2^{d}}\Big(\Big\|\Lambdamdmf\Big\| \Big\|\Lambdaydmf\Big\| \Big\|\left(\mQ_{\mtilde,\ytilde}-\frac{1}{\alpha_2}\left(\Wdm\right)^\intercal\Wdy\right)\Big\|\|\vy\|_2\notag\\
&\quad+\Big\|\frac{1}{\alpha_2}\left(\Lambdam\right)^\intercal\Lambday\vy -\tilde{t}_{\vm,\vy}^{(2)}\Big\|_2\Big)\|\vx\|_2\|\vm\|_2\notag\\
\leq&\frac{1+4\epsilon(d-1)}{2^{d+s}}\Big(6(1+4\epsilon(s-1))\epsilon/\alpha_2+24s^3\sqrt{\epsilon/\alpha_2}\Big)\|\vx\|_2\|\vm\|_2\|\vy\|_2\notag\\
\leq&\frac{2}{2^{d+s}}\Big(12\epsilon/\alpha_2+24s^3\sqrt{\epsilon/\alpha_2}\Big)\|\vx\|_2\|\vm\|_2\|\vy\|_2\notag\\
\leq&\frac{72s^3\sqrt{\epsilon}}{2^{2d}\alpha_2}\|\vx\|_2\|\vm\|_2\|\vy\|_2.\label{eq:eqn3}
\end{align}
where the first inequality holds because of \eqref{eq:sizeofLambdas}. The second inequality holds because of triangle inequality. The third inequality holds because of \eqref{eq:sizeofLambdasGm}, $\frac{1}{\sqrt{\alpha_2}}\Ws_d$ satisfy WDC with constant $\epsilon/\alpha_2$ and 1, and Lemma \ref{lem:concentration_no_comp}.

Third, consider
\begin{flalign}
&\Big\|\mtrans\tilde{t}_{\vm,\vy}^{(2)}\left(\Lambdahdmf \right)^\intercal\mQ_{\htilde,\xtilde}\Lambdaxdmf\vx-\mtrans\tilde{t}_{\vm,\vy}^{(2)}\tilde{t}_{\vh,\vx}^{(1)}\Big\|_2\notag\\
=&|\mtrans\tilde{t}_{\vm,\vy}^{(2)}|\Big\|\left(\Lambdahdmf \right)^\intercal\mQ_{\htilde,\xtilde}\Lambdaxdmf\vx-\tilde{t}_{\vh,\vx}^{(1)}\Big\|_2\notag\\
\leq&\|\tilde{t}_{\vm,\vy}^{(2)}\|_2\Big\|\left(\Lambdahdmf \right)^\intercal\left(\mQ_{\htilde,\xtilde}-\frac{1}{\alpha_1}\left(\Wdh\right)^\intercal\Wdx \right)\Lambdaxdmf\vx\notag\\
&\quad+\frac{1}{\alpha_1}\left(\Lambdah\right)^\intercal\Lambdax -\tilde{t}_{\vh,\vx}^{(1)}\Big\|_2\|\vm\|_2\notag\\
\leq&\frac{1+s}{2^s}\Big(\Big\|\Lambdahdmf\Big\|\Big\|\Lambdaxdmf\Big\|\Big\|\mQ_{\htilde,\xtilde}-\frac{1}{\alpha_1}\left(\Wdh\right)^\intercal\Wdx\Big\|\|\vx\|_2\notag\\
&\quad+\Big\|\frac{1}{\alpha_1}\left(\Lambdah\right)^\intercal\Lambdax\vx -\tilde{t}_{\vh,\vx}^{(1)}\Big\|_2\Big)\|\vm\|_2\|\vy\|_2\notag\\
\leq&\frac{2s}{2^{d+s}}\Big(6(1+4\epsilon(d-1))\epsilon/\alpha_1+24d^3\sqrt{\epsilon/\alpha_1}\Big)\|\vx\|_2\|\vm\|_2\|\vy\|_2\notag\\
\leq&\frac{72sd^3\sqrt{\epsilon}}{2^{d+s}\alpha_1}\|\vx\|_2\|\vm\|_2\|\vy\|_2.\label{eq:eqn4}
\end{flalign}
where the first inequality holds because of Cauchy-Schwartz inequality. The second inequality holds because of triangle inequality along with $\|\tilde{t}_{\vm,\vy}^{(2)}\|_2 \leq \frac{1+s}{2^s}$. The third inequality holds because of \eqref{eq:sizeofLambdas}, $\frac{1}{\sqrt{\alpha_1}}\Wf_d$ satisfy WDC with constant $\epsilon/\alpha_1$ and $1$, and Lemma \ref{lem:concentration_no_comp}.

Hence, combining \eqref{eq:eqn1}, \eqref{eq:eqn2}, \eqref{eq:eqn3}, and \eqref{eq:eqn4}, we get
\begin{flalign*}
&\left\|\left(\Lambdah\right)^\intercal\diag\left(\Lambdam\vm \odot\Lambday\vy \right)\Lambdax\vx-\frac{\alpha_1\alpha_2}{\l}\left(\mtrans\tilde{t}_{\vm,\vy}^{(2)}\right) \tilde{t}_{\vh,\vx}^{(1)}\right\|_2\\
\leq &\Big(\frac{64\epsilon}{2^{d+s}\l}+\frac{72s^3\sqrt{\epsilon}}{2^{d+s}\l}+\frac{72sd^3\sqrt{\epsilon}}{2^{2d}\l}\Big)\|\vx\|_2\|\vm\|_2\|\vy\|_2\\
\leq &\frac{208s^3d^3\sqrt{\epsilon}}{2^{d+s}\l}\|\vx\|_2\|\vm\|_2\|\vy\|_2.
\end{flalign*}
\end{proof}	


\subsection{Zeros of $\thmhomo$}

\begin{lemma}\label{lem:control_zeros}
Fix $0<\epsilon<1$ and $0<\alpha\leq 1$ such that $38(d^5+s^5)\sqrt{\epsilon}/\alpha < 1$. Let $\mathcal{K}=\{(\vh,\vzero)\in\R^{n\times p}\left| \vh\in\R^{n}\right.\}\cup\{(\vzero,\vm)\in\R^{n\times p}\big| \vm\in\R^{p}\big.\}$. Let 
\begin{align*}
&S_{\epsilon,(\htrue,\mtrue)}^{(1)} = \left\{(\vh,\vx)\in\R^{n\times p}\backslash\mathcal{K} \left|\frac{\|\tfhmhomo\|_2}{\|\vm\|_2 } \leq\frac{\epsilon\max(\|\vh\|_2\|\vm\|_2,\|\htrue\|_2\|\mtrue\|_2)}{2^{d+s}\l}\right.\right\},\\
&S_{\epsilon,(\htrue,\mtrue)}^{(2)} = \left\{(\vh,\vx)\in\R^{n\times p}\backslash\mathcal{K} \left|\frac{\|\tshmhomo\|_2}{\|\vh,\|_2 } \leq\frac{\epsilon\max(\|\vh\|_2\|\vm\|_2,\|\htrue\|_2\|\mtrue\|_2)}{2^{d+s}\l}\right.\right\},
&
\end{align*}
where $d$ and $s$ are an integers greater than 1. Let
\begin{equation}
	\tilde{\vt}_{\vm,\vy}^{(k)} = \frac{1}{2^\ak}\left(\prod_{i=0}^{\ak-1}\frac{\pi-\thetaibark}{\pi}\vy+\sum_{i=0}^{\ak-1}\frac{\sin\thetaibark}{\pi}\left(\prod_{j=i+1}^{\ak-1}\frac{\pi-\thetajbark}{\pi}\right)\frac{\|\vy\|_2}{\|\vm\|_2}\vm\right),
\end{equation}
where $\thetaibark = g(\thetaimbark)$ for $g$ given in \eqref{eq:g}, $\thetaobark = \angle(\vm,\vy)$, $\af = d$, and $\as = s$. Let $\thmhomo =\left[\begin{array}{c}\tfhmhomo\\\tshmhomo\end{array}\right]$ where
\begin{align}
	&\tfhmhomo =\frac{\alpha}{2^{d+s}\l}\|\vm\|_2^2\vh-\frac{\alpha}{\l}\mtrans\tstilde_{\vm,\mtrue}\tftilde_{\vh,\htrue},\\
	&\tshmhomo =\frac{\alpha}{2^{d+s}\l}\|\vh\|_2^2\vm-\frac{\alpha}{\l}\htrans\tftilde_{\vh,\htrue}\tstilde_{\vm,\mtrue}.
\end{align}
Define
\begin{equation*}
	\rhok:=\sum_{i=1}^{\ak-1}\frac{\sin\thetaichek}{\pi}\left(\prod_{j=i+1}^{\ak-1}\frac{\pi-\thetajchek}{\pi}\right),
\end{equation*}
where $\thetaochek = \pi$ and $\thetaichek = g(\thetaimchek)$. If $(\vh,\vm)\in S_{\epsilon,(\htrue,\mtrue)}^{(1)}\cap S_{\epsilon,(\htrue,\mtrue)}^{(2)}$  then one of the following holds:
\begin{align*}
	\bullet &\left|\thetaobarf\right|\leq 2\sqrt{\epsilon},\ \left|\thetaobars\right|\leq 2\sqrt{\epsilon} \text{ and }\\
	&\hskip1.5in \left|\|\vh\|_2\|\vm\|_2-\|\htrue\|_2\|\mtrue\|_2\right|\leq 145\frac{ds\sqrt{\epsilon}}{\alpha}\|\htrue\|_2\|\mtrue\|_2,\\
	\bullet & \left|\thetaobarf-\pi\right|\leq 12\pi^2d^3\sqrt{\epsilon}/\alpha,\ \left|\thetaobars\right|\leq 1.5\sqrt{\epsilon} \text{ and }\\
	&\hskip2in \left|\|\vh\|_2\|\vm\|_2-\rhof\|\htrue\|_2\|\mtrue\|_2\right|\leq 532\frac{d^6s\sqrt{\epsilon}}{\alpha}\|\htrue\|_2\|\mtrue\|_2,\\
	\bullet & \left|\thetaobarf\right|\leq 2\sqrt{\epsilon},\ \left|\thetaobars-\pi\right|\leq 12\pi^2s^3\sqrt{\epsilon}/\alpha \text{ and }\\
	&\hskip1.5in \left|\|\vh\|_2\|\vm\|_2-\rhof\|\htrue\|_2\|\mtrue\|_2\right|\leq 532\frac{ds^6\sqrt{\epsilon}}{\alpha}\|\htrue\|_2\|\mtrue\|_2,\\
	\bullet & \left|\thetaobarf-\pi\right|\leq 12\pi^2d^3\sqrt{\epsilon}\alpha,\ \left|\thetaobars-\pi\right|\leq 12\pi^2d^3\sqrt{\epsilon}/\alpha \text{ and }\\
	&\hskip1.5in \left|\|\vh\|_2\|\vm\|_2-\rhof\rhos\|\htrue\|_2\|\mtrue\|_2\right|\leq 3915 \frac{d^6s^6\sqrt{\epsilon}}{\alpha}\|\htrue\|_2\|\mtrue\|_2.
\end{align*}
In particular,
\begin{align*}
	S_{\epsilon,(\htrue,\mtrue)}^{(1)}\cap S_{\epsilon,(\htrue,\mtrue)}^{(2)}&\subseteq \mathcal{A}_{437\frac{ds\sqrt{\epsilon}}{\alpha},(\htrue,\mtrue)}\cup \mathcal{A}_{2436\pi \frac{d^6s\sqrt{\epsilon}}{\alpha},\left(-\rhof\htrue,\mtrue\right)}\\
	&\quad\cup \mathcal{A}_{2436\pi \frac{ds^6\sqrt{\epsilon}}{\alpha},\left(\rhos\htrue,-\mtrue\right)}\cup \mathcal{A}_{16767\pi^2 \frac{d^6s^6\sqrt{\epsilon}}{\alpha},\left(-\rhof\rhos\htrue,-\mtrue\right)},
\end{align*}
where $\mathcal{A}_{\epsilon,(\htrue,\mtrue)}$ is defined in \eqref{eq:recovery_area_hyper}.

\end{lemma}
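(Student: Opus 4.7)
The plan is to exploit the fact that both $\tftilde_{\vh,\htrue}$ and $\tstilde_{\vm,\mtrue}$ are explicit linear combinations of their vector arguments. Setting
\[
A^{(1)} = \prod_{i=0}^{d-1}\tfrac{\pi-\thetaibarf}{\pi},\quad B^{(1)} = \sum_{i=0}^{d-1}\tfrac{\sin\thetaibarf}{\pi}\prod_{j=i+1}^{d-1}\tfrac{\pi-\thetajbarf}{\pi},\quad \xif := A^{(1)}\cos\thetaobarf + B^{(1)},
\]
and defining $A^{(2)},B^{(2)},\xis$ analogously, \eqref{eq:tpq} reads $\tftilde_{\vh,\htrue} = 2^{-d}\bigl(A^{(1)}\htrue + B^{(1)}\tfrac{\|\htrue\|_2}{\|\vh\|_2}\vh\bigr)$, so $\htrans\tftilde_{\vh,\htrue} = 2^{-d}\|\vh\|_2\|\htrue\|_2\xif$ (and similarly for $(2)$). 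Substituting into the definitions of $\tfhmhomo$ and $\tshmhomo$ expresses each as a linear combination of two vectors lying in $\mathrm{span}(\vh,\htrue)$ and $\mathrm{span}(\vm,\mtrue)$ respectively.

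First, I would project $\tfhmhomo$ within $\mathrm{span}(\vh,\htrue)$ onto $\hat\vh$ and its in-plane perpendicular. A short computation shows the perpendicular component has norm $\tfrac{\alpha\xis A^{(1)}\|\vm\|_2\|\mtrue\|_2\|\htrue\|_2\sin\thetaobarf}{2^{d+s}\l}$, while the parallel component equals $\tfrac{\alpha\|\vm\|_2}{2^{d+s}\l}\bigl(\|\vh\|_2\|\vm\|_2 - \xif\xis\|\htrue\|_2\|\mtrue\|_2\bigr)$. Bounding both by $\|\tfhmhomo\|_2$ and invoking the defining inequality of $S^{(1)}_{\epsilon,(\htrue,\mtrue)}$ yields scalar controls on $\alpha A^{(1)}\xis\sin\thetaobarf$ and on $\bigl|\|\vh\|_2\|\vm\|_2 - \xif\xis\|\htrue\|_2\|\mtrue\|_2\bigr|$. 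The analogous projection of $\tshmhomo$ onto $\mathrm{span}(\vm,\mtrue)$ produces the symmetric inequalities with $(1)\leftrightarrow(2)$; the two parallel-component conditions in fact give the same scalar relation $\|\vh\|_2\|\vm\|_2 \approx \xif\xis\|\htrue\|_2\|\mtrue\|_2$.

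Next, I would use the properties of the recursion $\thetaibark = g(\thetaimbark)$ carried over from \cite{Hand2017DCS}: $\xif,\xis$ are bounded below by positive quantities depending polynomially on $1/d, 1/s$, and $A^{(k)}\sin\thetaobark$ is bounded below whenever $\thetaobark$ stays in a compact subset of $(0,\pi)$. Combined with the parallel-component relation, which a priori forces $\|\vh\|_2\|\vm\|_2 = O(\|\htrue\|_2\|\mtrue\|_2)$, the perpendicular inequalities then confine each $\thetaobark$ to a small neighborhood of $0$ or $\pi$. Enumerating the four cases and inserting the limiting values into the parallel equation ($\xik \to 1$ when $\thetaobark\to 0$; $A^{(k)}\to 0$ and $B^{(k)}\to\rhok$ when $\thetaobark\to\pi$, so $\xik \to \rhok$) yields magnitude products approximately $1,\rhof,\rhos,\rhof\rhos$ times $\|\htrue\|_2\|\mtrue\|_2$, matching the four centers of the $\mathcal{A}$-sets. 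The final inclusion follows by selecting the natural scaling $c$ in each case and combining the angle- and magnitude-error estimates via the triangle inequality, using that $(\vh,\vm)\notin\mathcal{K}$ so that neither norm vanishes.

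The main difficulty lies in the $\thetaobark\approx\pi$ regime, where $A^{(k)}\to 0$ makes the perpendicular inequality degenerate, so one cannot directly obtain a rate for $|\thetaobark-\pi|$. Instead one extracts such a rate from the parallel relation together with a quantitative first-order expansion of $\xik$ around $\pi$; since a perturbation of $\thetaobark$ propagates through $d$ (resp.\ $s$) iterations of $g$ and the associated sum--product, derivatives of $\xik$ carry factors of $d^3$ (resp.\ $s^3$), which then amplify to the $d^6,s^6$ pre-constants in the radii of the non-trivial $\mathcal{A}$-sets. Tracking these factors rigorously through all four cases is the bookkeeping bottleneck of the argument.
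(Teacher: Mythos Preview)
Your proposal is essentially correct and very close to the paper's argument; the skeleton---reduce to scalar inequalities inside $\mathrm{span}(\vh,\htrue)$ and $\mathrm{span}(\vm,\mtrue)$, deduce an angle dichotomy $\thetaobark\in\{0,\pi\}+O(\cdot)$, then plug back in to pin down $\|\vh\|_2\|\vm\|_2$ in each of the four cases---is exactly what the paper does. Two minor differences are worth noting.

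First, the paper resolves $\tfhmhomo$ in the $(\hat\htrue,\hat\htrue^{\perp})$-basis rather than your $(\hat\vh,\hat\vh^{\perp})$-basis, obtaining
\[
\bigl|\rf\rs\cos\thetaobarf-\cos\thetasbars(\xif+\zetaf\cos\thetaobarf)\bigr|\le\tfrac{\epsilon M}{\alpha},\qquad
\bigl|\sin\thetaobarf\,(\rf\rs-\cos\thetasbars\zetaf)\bigr|\le\tfrac{\epsilon M}{\alpha}
\]
(here $\xif,\zetaf$ are the paper's names for your $A^{(1)},B^{(1)}$). Your choice of basis is arguably cleaner: your parallel component yields directly $|\rf\rs-\xif\xis|\le\epsilon M/\alpha$, which immediately gives the a priori bound $\rf\rs=O(1)$ without the $|\sin|,|\cos|\ge 1/\sqrt2$ case split the paper uses.

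Second, the paper obtains the angle dichotomy by introducing an auxiliary threshold $\sqrt{\epsilon}M/\alpha$ on $|\rf\rs-\cos\thetasbars\zetaf|$: above it the $\hat\htrue^{\perp}$-equation forces $|\sin\thetaobarf|\le\sqrt\epsilon$, below it the $\hat\htrue$-equation forces $A^{(1)}$ small, hence $\thetaobarf\approx\pi$ via the lower bound $A^{(1)}\ge(\pi-\thetaobarf)/(\pi d^3)$. Your proposal instead uses the single product bound $A^{(1)}\sin\thetaobarf\lesssim\epsilon/\alpha$ from your perpendicular component. In fact this is already enough for the $\thetaobark\approx\pi$ rate you worry about: since $A^{(1)}\sin\thetaobarf\ge(\pi-\thetaobarf)\sin\thetaobarf/(\pi d^3)$ vanishes only quadratically at $\pi$, you get $|\thetaobarf-\pi|\lesssim d^{3/2}\sqrt{\epsilon/\alpha}$ directly, which is at least as good as the paper's $12\pi^2d^3\sqrt\epsilon/\alpha$. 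So your ``main difficulty'' paragraph is overstated---no separate expansion of $\xik$ is needed there. The remaining case-by-case bookkeeping (your last paragraph) is carried out in the paper by tracking $O_1(\cdot)$ errors through the explicit formulas for $\xik,\zetak$ in the four angle regimes, just as you outline.
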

\begin{proof}
Without loss of generality, let $\htrue = \ve_1$, $\mtrue = \ve_1$, $\hath = \cos\thetaobarf +\sin\thetaobarf$ and $\hatm = \cos\thetaobars+\sin\thetaobars$ for some $\thetaibarf,\thetaobars \in[0,\pi]$. First we introduce some notation for convenience. Let
\[
\xik = \prod_{i=0}^{\ak-1}\frac{\pi-\thetaibark}{\pi},\quad\zetak = \sum_{i=1}^{\ak-1}\frac{\sin\thetaibark}{\pi}\prod_{j=i+1}^{\ak-1}\frac{\pi-\thetajbark}{\pi},
\]
 $\rf = \|\vh\|_2,\ \rs = \|\vm\|_2,$ and $M = \max(\rf\rs,1)$. Using these notation, we can rewrite $\tfhmhomo$ as
 \begin{align*}
 	&\tfhmhomo\\
 	 = &\frac{\alpha}{2^{d+s}\l}\left(\|\vm\|_2\vh-\left(\xis\cos\thetaobars+\zetas\right)\left(\xif\frac{\htrue}{\|\htrue\|_2}+\zetaf\frac{\vh}{\|\vh\|_2}\right)\|\htrue\|_2\|\mtrue\|_2\right)\|\vm\|_2\\
	=& \frac{\alpha}{2^{d+s}\l}\left(\|\vm\|_2\vh-\cos\thetasbars \left(\xif\frac{\htrue}{\|\htrue\|_2}+\zetaf\frac{\vh}{\|\vh\|_2}\right)\|\htrue\|_2\|\mtrue\|_2\right)\|\vm\|_2\\
	=&\frac{\alpha}{2^{d+s}\l}\bigg(\rf\rs\left(\cos\thetaobarf\ve_1+\sin\thetaobarf\ve_2)\right)\\
	&\quad-\cos\thetasbars\Big(\xif\ve_1+\zetaf\left(\cos\thetaobarf\ve_1+\sin\thetaobarf\ve_2\right)\Big)\bigg)\rs.
 \end{align*}
 By inspecting the components of $\tfhmhomo$, we have that $(\vh,\vm)\in S_{\epsilon,(\htrue,\mtrue)}^{(1)}$ implies
 \begin{align}
 &\left|\rf\rs\cos\thetaobarf-\cos\thetasbars \left(\xif+\zetaf\cos\thetaobarf\right)\right|	\leq\frac{\epsilon M}{\alpha}\label{eq:tf_comp1}\\
  &\left|\rf\rs\sin\thetaobarf-\cos\thetasbars\zetaf\sin\thetaobarf\right|	\leq\frac{\epsilon M}{\alpha}\label{eq:tf_comp2}
 \end{align}
Similarly, by inspecting the components of $\tshmhomo$, we have that $(\vh,\vm)\in S_{\epsilon,(\htrue,\mtrue)}^{(2)}$ implies
 \begin{align}
 &\left|\rf\rs\cos\thetaobars-\cos\thetadbarf \left(\xis+\zetas\cos\thetaobars\right)\right|	\leq\frac{\epsilon M}{\alpha}\label{eq:ts_comp1}\\
  &\left|\rf\rs\sin\thetaobars-\cos\thetadbarf\zetas\sin\thetaobars\right|	\leq\frac{\epsilon M}{\alpha}\label{eq:ts_comp2}
 \end{align} 
Now, we record several properties. We have:
\begin{align}
	&\thetaibark \in[0,\pi/2]\text{ for } i\geq 1\\
	&\thetaibark\leq\bar{\theta}_{i-1}^{(k)}\text{ for } i\geq 1\\
	&|\xik|\leq 1\label{eq:ubound_xik}\\
		&\thetaichek\leq\frac{3\pi}{i+3}\text{ for }i\geq0\\
	&\thetaichek\geq\frac{\pi}{i+1}\text{ for } i\geq 0\\
	&\xik = \prod_{i=1}^{\ak-1}\frac{\pi-\thetaibark}{\pi}\geq\frac{\pi-\thetaobark}{\pi}\ak^{-3}\label{eq:lboundxik}\\
	&\thetaobark=\pi+O_1(\delta)\Rightarrow\thetaibark=\thetaichek+O_1(i\delta)\\
	&\thetaobark=\pi+O_1(\delta)\Rightarrow|\xik|\leq\frac{\delta}{\pi}\label{eq:xipi}\\
	&\thetaobark=\pi+O_1(\delta)\Rightarrow\zetak=\rhodk+O_1(3\ak^3\delta)\text{ if }\frac{\ak^2\delta}{\pi}\leq1\label{eq:zetapi}\\
	&|\zetak|= |\xik \cos\thetaobark -\cos\thetaabark| \leq 2\label{eq:ubound_zetak}\\
	&\cos\thetaibark\geq \frac{1}{\pi}\text{ for } i\geq 2\label{eq:cosdangle}
\end{align}
For a proof of \eqref{eq:ubound_xik}-\eqref{eq:zetapi}, we refer the readers to Lemma 8 of \cite{Hand2017DCS}. Also, we note that \eqref{eq:cosdangle} follows directly from \eqref{eq:ubound_zetak}.

We first show that if $(\vh,\vm) \in S_{\epsilon,(\htrue,\mtrue)}^{(1)}$ then $\rf\rs \leq 6$, and thus $M\leq 6$. Suppose $\rf\rs>1$. At least one of the following holds:$|\sin\thetaobarf|\geq1/\sqrt{2}$ or $|\cos\thetaobarf|\geq1/\sqrt{2}$. If $|\sin\thetaobarf|\geq 1/\sqrt{2}$ then \eqref{eq:tf_comp2} implies that $\left|\rf\rs-\cos\thetasbars\zetaf\right|\leq \sqrt{2}\epsilon\rf\rs/\alpha$. Using \eqref{eq:ubound_zetak}, we get $\rf\rs\leq \frac{2}{1-\sqrt{2}\epsilon/\alpha}\leq 4$ if $\epsilon/\alpha<1/4$. If $|\cos\thetaobarf|\geq1/\sqrt{2}$, then \eqref{eq:tf_comp1} implies $\left|\rf\rs-\cos\thetasbars\zetaf\right|\leq \sqrt{2}\left(\epsilon \rf\rs/\alpha+|\xif|\right)$. Using \eqref{eq:ubound_xik}, \eqref{eq:ubound_zetak}, and $\epsilon/\alpha<1/4$, we get $\rf\rs\leq \frac{\sqrt{2}|\xik|+\cos\thetasbars\zetaf}{1-\sqrt{2}\epsilon/\alpha}\leq \frac{2+\sqrt{2}}{1-\sqrt{2}\epsilon/\alpha}\leq 6$. Thus, we have $(\vh,\vm)\in S_{\epsilon,(\htrue,\mtrue)}^{(1)}\Rightarrow \rf\rs\leq 6\Rightarrow M\leq 6$. Similarly, we have $(\vh,\vm)\in S_{\epsilon,(\htrue,\mtrue)}^{(2)}\Rightarrow \rf\rs\leq 6\Rightarrow M\leq 6$.

Next we establish that we only need to consider the small angle case and the large angle case (i.e. $\thetaobark \approx 0 \text{ or } \pi$) if $(\vh,\vm)\in S_{\epsilon,(\htrue,\mtrue)}^{(1)}\cap S_{\epsilon,(\htrue,\mtrue)}^{(2)}$. Exactly one of the following holds: $\left|\rf\rs-\cos\thetasbars\zetaf\right|\geq \sqrt{\epsilon}M/\alpha$ or $\left|\rf\rs-\cos\thetasbars\zetaf\right|< \sqrt{\epsilon}M/\alpha$. If $\left|\rf\rs-\cos\thetasbars\zetaf\right|\geq \sqrt{\epsilon}M/\alpha$, then by \eqref{eq:tf_comp2}, we have $|\sin\thetaobarf|\leq\sqrt{\epsilon}$. Hence $\thetaobarf = O_1(2\sqrt{\epsilon})$ or $\thetaobarf = \pi+O_1(2\sqrt{\epsilon})$, as $\epsilon <1$. If $\left|\rf\rs-\cos\thetasbars\zetaf\right|< \sqrt{\epsilon}M/\alpha$, then by \eqref{eq:tf_comp1} and \eqref{eq:cosdangle} we have $\left|\xif\right|\leq 2\pi\sqrt{\epsilon}M/\alpha$. Using \eqref{eq:lboundxik}, we get $\thetaobarf = \pi +O_1(2\pi^2d^3\sqrt{\epsilon}M/\alpha)$. Thus, we only need to consider the small angle case, $\thetaobarf = O_1(2\sqrt{\epsilon})$ and the large angle case $\thetaobarf = \pi +O_1(12\pi^2d^3\sqrt{\epsilon}/\alpha)$, where we have used $M\leq 6$. Similarly, we only need to consider the small angle case, $\thetaobars = O_1(2\sqrt{\epsilon})$ and the large angle case $\thetaobars = \pi +O_1(12\pi^2s^3\sqrt{\epsilon}/\alpha)$.

{\bf Case 1: $\boldsymbol{\thetaobarf \approx 0}$ and $\boldsymbol{\thetaobars \approx 0}$ }. Assume $\thetaobark = O_1(2\sqrt{\epsilon})$. As $\thetaibark\leq\thetaobark\leq 2\sqrt{\epsilon}$ for all $i$, we have $\xik \geq \left(1-\frac{2\sqrt{\epsilon}}{\pi}\right)^\ak = 1+O_1(\frac{4\ak\sqrt{\epsilon}}{\pi})$ provided $2\ak\sqrt{\epsilon}\leq 1/2$. By \eqref{eq:ubound_zetak}, we also have $\zetak =O_1(\frac{\ak}{\pi}2\sqrt{\epsilon}) = O_1(\ak\sqrt{\epsilon})$. By \eqref{eq:tf_comp1}, we have
\[
\left|\rf\rs\cos\thetaobarf-\left(\xis\cos\thetaobars+\zetas\right) \left(\xif+\zetaf\cos\thetaobarf\right)\right|	\leq\frac{\epsilon M}{\alpha}
\]
where we used $\cos\thetaabark = \xik\cos\thetaobark+\zetak$. As $\cos\thetaobark = 1+O_1((\thetaobark)^2/2) = 1+O_1(2\epsilon)$,
\begin{align*}
\xis\cos\thetaobars+\zetas = &1+O_1(8s\epsilon\sqrt{\epsilon}+4s\sqrt{\epsilon}+2\epsilon+s\sqrt{\epsilon})=1+O_1(15s\sqrt{\epsilon}),\\
\xif+\zetaf\cos\thetaobarf =& 1+O_1(4d\sqrt{\epsilon}+2d\epsilon\sqrt{\epsilon}+d\sqrt{\epsilon})=1+O_1(7d\sqrt{\epsilon}).
\end{align*}
Thus,
\begin{align}
\rf\rs = 1+O_1(12\epsilon+6\epsilon/\alpha+105ds\epsilon+7d\sqrt{\epsilon}+15s\sqrt{\epsilon}) = 1+O_1(145ds\sqrt{\epsilon}/\alpha).
\end{align}

We now show $(\vh,\vm)$ is close to $\left(c\htrue,\frac{1}{c}\mtrue\right)$, where $c=\frac{\|\mtrue\|_2}{\|\vm\|_2}$. Consider
\begin{align*}
	&\left\|\vh-\frac{\|\mtrue\|_2}{\|\vm\|_2}\htrue\right\|_2\\
	\leq& \frac{1}{\|\vm\|_2}\left(\left|\|\vh\|_2\|\vm\|_2-\|\mtrue\|_2\|\htrue\|_2\right|+\left(\|\htrue\|_2\|\mtrue\|_2+\left|\|\vh\|_2\|\vm\|_2-\|\mtrue\|_2\|\htrue\|_2\right|\right)\thetaobarf\right)\\
	\leq& \frac{1}{\|\vm\|_2}\left(145ds\sqrt{\epsilon}\|\htrue\|_2\|\mtrue\|_2/\alpha+\left(\|\htrue\|_2\|\mtrue\|_2+145ds\sqrt{\epsilon}\|\htrue\|_2\|\mtrue\|_2 /\alpha\right)2\sqrt{\epsilon}\right)\\
	\leq& 437\frac{ds\sqrt{\epsilon}}{\alpha}\frac{\|\htrue\|_2\|\mtrue\|_2}{\|\vm\|_2}.
\end{align*}
Similarly,
\begin{align*}
	\left\|\vm-\frac{\|\vm\|_2}{\|\mtrue\|_2}\mtrue\right\|_2\leq \left(\left|\|\vm\|_2-\|\vm\|_2\right|+\left(\|\vm\|_2+\left|\|\vm\|_2-\|\vm\|_2\right|\right)\thetaobars\right)\leq 2\sqrt{\epsilon}\|\vm\|_2.
\end{align*}
Hence,
\begin{align*}
	\left\|(\vh,\vm)-\left(c\htrue,\frac{1}{c}\mtrue\right)\right\|_2\leq 437\frac{ds\sqrt{\epsilon}}{\alpha}\left\|\left(c\htrue,\frac{1}{c}\mtrue\right)\right\|_2.
\end{align*}

{\bf Case 2: $\boldsymbol{\thetaobarf \approx \pi}$ and $\boldsymbol{\thetaobars \approx 0}$ } Assume $\thetaobarf = \pi +O_1(\delta)$ where $\delta = 12\pi^2d^3\sqrt{\epsilon}/\alpha$. By \eqref{eq:xipi} and \eqref{eq:zetapi}, we have $\xif = O_1(\delta/\pi)$, and we have $\zetaf = \rhof+O_1(3d^3\delta)$ if $38d^5\sqrt{\epsilon}/\alpha\leq 1$. Also, assume $\thetaobars = O_1(2\sqrt{\epsilon})$. As $\thetaibars\leq\thetaobars\leq 2\sqrt{\epsilon}$ for all $i$, we have $\xis \geq \left(1-\frac{2\sqrt{\epsilon}}{\pi}\right)^s = 1+O_1(\frac{4s\sqrt{\epsilon}}{\pi})$ provided $2s\sqrt{\epsilon}\leq 1/2$. By \eqref{eq:ubound_zetak}, we also have $\zetas =O_1(\frac{s}{\pi}2\sqrt{\epsilon}) = O_1(s\sqrt{\epsilon})$. By \eqref{eq:ts_comp1}, we have
\[
\left|\rf\rs\cos\thetaobars-\left(\xif\cos\thetaobarf+\zetaf\right) \left(\xis+\zetas\cos\thetaobars\right)\right|	\leq\frac{\epsilon M}{\alpha}
\]
where we used $\cos\thetaabark = \xik\cos\thetaobark+\zetak$. As $\cos\thetaobarf=-1+O((\thetaobarf-\pi)^2/2)=-1+O_1(\delta^2/2)$ provided $\delta<1$ and $\cos\thetaobars = 1+O_1((\thetaobars)^2/2) = 1+O_1(2\epsilon)$,
\begin{align*}
\xif\cos\thetaobarf+\zetaf = &\rhof +O_1(\frac{\delta^3}{2\pi}+\frac{\delta}{\pi}+3d^3\delta) =\rhof+ O_1(4\delta d^3),\\
\xis+\zetas\cos\thetaobars =& 1+O_1(4s\sqrt{\epsilon}+2s\epsilon\sqrt{\epsilon}+s\sqrt{\epsilon})=1+O_1(7s\sqrt{\epsilon}).
\end{align*}
Thus,
\begin{align*}
	\rf\rs =& \rhof+O_1(12\epsilon+6\epsilon/\alpha +4\delta d^3+7s\sqrt{\epsilon}+28d^3s\delta\sqrt{\epsilon})\\
		=& \rhof+O_1(30d\sqrt{\epsilon}/\alpha +  4\delta d^3 +28d^3s\sqrt{\epsilon} )\\
		=&\rhof +O_1(532d^6s\sqrt{\epsilon}/\alpha).
\end{align*}
where, in the second equality, we use $\delta<1$. We now show $(\vh,\vm)$ is close to $\left(-c\rhof\htrue,\frac{1}{c}\mtrue\right)$, where $c=\frac{\|\mtrue\|_2}{\|\vm\|_2}$. Consider
\begin{align*}
	&\left\|\vh+\frac{\|\mtrue\|_2}{\|\vm\|_2}\rhof\htrue\right\|_2\\
	\leq& \frac{1}{\|\vm\|_2}\Big(\left|\|\vh\|_2\|\vm\|_2-\rhof\|\htrue\|_2\|\mtrue\|_2\right|+\Big(\rhof\|\htrue\|_2\|\mtrue\|_2+\big|\|\vh\|_2\|\vm\|_2\\
	&\quad-\rhof\|\htrue\|_2\|\mtrue\|_2\big|\Big)\thetaobarf\Big)\\
	\leq& \frac{1}{\|\vm\|_2}\left(532d^6s\sqrt{\epsilon}\|\mtrue\|_2\|\htrue\|_2/\alpha+\left(2\|\htrue\|_2\|\mtrue\|_2+532d^6s\sqrt{\epsilon}\|\mtrue\|_2\|\htrue\|_2 /\alpha\right)119d^3\sqrt{\epsilon}/\alpha\right)\\
	\leq& \frac{1}{\|\vm\|_2}\left(532d^6s\sqrt{\epsilon}\|\mtrue\|_2\|\htrue\|_2/\alpha+\left(2\|\htrue\|_2\|\mtrue\|_2+14s\|\mtrue\|_2\|\htrue\|_2 \right)119d^3\sqrt{\epsilon}/\alpha\right)\\
	\leq& 2436\pi\frac{ d^6s\sqrt{\epsilon}}{\alpha}\rhof\frac{\|\htrue\|_2\|\mtrue\|_2}{\|\vm\|_2}.
\end{align*}
Similarly,
\begin{align*}
	\left\|\vm-\frac{\|\vm\|_2}{\|\mtrue\|_2}\mtrue\right\|_2\leq \left(\left|\|\vm\|_2-\|\vm\|_2\right|+\left(\|\vm\|_2+\left|\|\vm\|_2-\|\vm\|_2\right|\right)\thetaobars\right)\leq 2\sqrt{\epsilon}\|\vm\|_2.
\end{align*}
Hence,
\begin{align*}
	\left\|(\vh,\vm)-\left(-c\rhof\htrue,\frac{1}{c}\mtrue\right)\right\|_2\leq 2436\pi \frac{d^6s\sqrt{\epsilon}}{\alpha}\left\|\left(-c\rhof\htrue,\frac{1}{c}\mtrue\right)\right\|_2.
\end{align*}

{\bf Case 3: $\boldsymbol{\thetaobarf \approx 0}$ and $\boldsymbol{\thetaobars \approx \pi}$ }. The analysis is similar to case 2. Using \eqref{eq:tf_comp1}, we get
\begin{align*}
	\rf\rs = \rhos +O_1(532ds^6\sqrt{\epsilon}/\alpha).
\end{align*}

Again, similar to case 2, we can show $(\vh,\vm)$ is close to $\left(c\rhos\htrue,-\frac{1}{c}\mtrue\right)$, where $c=\frac{\|\vh\|_2}{\|\htrue\|_2}$. We get,
\begin{align*}
	\left\|(\vh,\vm)-\left(c\rhos\htrue,-\frac{1}{c}\mtrue\right)\right\|_2\leq 2436\pi \frac{ds^6\sqrt{\epsilon}}{\alpha}\left\|\left(c\rhos\htrue,-\frac{1}{c}\mtrue\right)\right\|_2.
\end{align*}

{\bf Case 4: $\boldsymbol{\thetaobarf \approx \pi}$ and $\boldsymbol{\thetaobars \approx \pi}$}. Assume $\thetaobark = \pi +O_1(\deltak)$ where $\deltak = 12\pi^2\ak^3\sqrt{\epsilon}/\alpha$. By \eqref{eq:xipi} and \eqref{eq:zetapi}, we have $\xik = O_1(\deltak/\pi)$, and we have $\zetak = \rhodk +O_1(3\ak^3\deltak)$ if $\frac{\ak^2\delta}{\pi}\leq 1$. By \eqref{eq:tf_comp1}, we have
\[
\left|\rf\rs\cos\thetaobarf-\left(\xis\cos\thetaobars+\zetas\right) \left(\xif+\zetaf\cos\thetaobarf\right)\right|	\leq\frac{\epsilon M}{\alpha}
\]
where we used $\cos\thetaabark = \xik\cos\thetaobark+\zetak$. As $\cos\thetaobark=-1+O((\thetaobark-\pi)^2/2)=-1+O_1((\deltak)^2/2)$,
\begin{align*}
\xis\cos\thetaobars+\zetas = &\rhos +O_1(\frac{(\deltas){^3}}{2\pi}+\frac{\deltas}{\pi}+3s^3\deltas) =\rhos+ O_1(4\deltas s^3),\\
\xif+\zetaf\cos\thetaobarf =& -\rhof+O_1(\frac{\deltaf}{\pi}+\frac{3}{2}d^3(\deltaf){^3}+3\deltaf d^3)=-\rhof +O_1(5\deltaf d^3).
\end{align*}
Thus,
\begin{align*}
\rf\rs =& \rhof\rhos +O_1(6\epsilon/\alpha + 4(\deltaf)^2+4\deltas s^3 + 5\deltaf d^3 + 20\deltaf\deltas d^3s^3)\\	
 =& \rhof\rhos + O_1(6\epsilon/\alpha + 4\deltaf+4\deltas s^3 + 5\deltaf d^3 + 20\deltas d^3s^3)\\
 =& \rhof\rhos + O_1(6\epsilon/\alpha + 3909d^6s^6\sqrt{\epsilon}/\alpha)\\
  =& \rhof\rhos + O_1(3915d^6s^6\sqrt{\epsilon}/\alpha),
\end{align*}
where, in the second equality, we used $\deltaf\leq\frac{\pi}{d^2}<1$. We now show $(\vh,\vm)$ is close to $\left(-c\rhof\rhos\htrue,-\frac{1}{c}\mtrue\right)$, where $c=\frac{\|\mtrue\|_2}{\|\vm\|_2}$. Consider
\begin{align*}
	&\left\|\vh+\frac{\|\mtrue\|_2}{\|\vm\|_2}\rhof\rhos\htrue\right\|_2\\
	\leq& \frac{1}{\|\vm\|_2}\Big(\left|\|\vh\|_2\|\vm\|_2-\rhof\rhos\|\mtrue\|_2\|\htrue\|_2\right|+\Big(\rhof\rhos\|\htrue\|_2\|\mtrue\|_2+\\
	&\quad\left|\|\vh\|_2\|\vm\|_2-\rhof\rhos\|\mtrue\|_2\|\htrue\|_2\right|\Big)\thetaobarf\Big)\\
	\leq& \frac{1}{\|\vm\|_2}\big(3915d^6s^6\sqrt{\epsilon}\|\htrue\|_2\|\mtrue\|_2/\alpha+\big(4\|\htrue\|_2\|\mtrue\|_2\\
	&\quad+3915d^6s^6\sqrt{\epsilon}\|\htrue\|_2\|\mtrue\|_2/\alpha \big)119d^3\sqrt{\epsilon}/\alpha\big)\\
	\leq& \frac{1}{\|\vm\|_2}\left(3915d^6s^6\sqrt{\epsilon}\|\htrue\|_2\|\mtrue\|_2/\alpha+\left(4\|\htrue\|_2\|\mtrue\|_2+104ds^6\|\htrue\|_2\|\mtrue\|_2 \right)119d^3\sqrt{\epsilon}/\alpha\right)\\
	\leq& 16767\pi^2\frac{d^6s^6\sqrt{\epsilon}}{\alpha}\rhof\rhos\frac{\|\htrue\|_2\|\mtrue\|_2}{\|\vm\|_2}.
\end{align*}
Similarly,
\begin{align*}
	\left\|\vm+\frac{\|\vm\|_2}{\|\mtrue\|_2}\mtrue\right\|_2\leq \left(\left|\|\vm\|_2-\|\vm\|_2\right|+\left(\|\vm\|_2+\left|\|\vm\|_2-\|\vm\|_2\right|\right)\thetaobars\right)\leq 119s^3\sqrt{\epsilon}\|\vm\|_2/\alpha.
\end{align*}
Hence,
\begin{align*}
	\left\|(\vh,\vm)-\left(-c\rhof\rhos\htrue,-\frac{1}{c}\mtrue\right)\right\|_2\leq 16767\pi^2\frac{d^6s^6\sqrt{\epsilon}}{\alpha}\left\|\left(c\rhof\rhos\htrue,\frac{1}{c}\mtrue\right)\right\|_2.
\end{align*}
\end{proof}

\subsection{Proof of WDC condition}
We first state a lemma that shows that the weight $\mW\in \R^{\l\times n}$ of a layer of a neural network layer with i.i.d. $\mathcal{N}(0,1/\l)$ entries satisfies the WDC with constant $\epsilon$ and $1$, and we refer the readers to \cite{Hand2017DCS} for a proof of the lemma.
\begin{lemma}\label{lem:WDC}
Fix $0<\epsilon<1$. Let $\mW \in \R^{\l\times n}$ have i.i.d. $\mathcal{N}(0,1/\l)$ entries. If $\l>cn\log n$, then with probability at least $1-8\l e^{-\gamma n}$, $\mW$ satisfies the WDC with constant $\epsilon$ and $1$. Here $c, \gamma^{-1}$ are constants that depend only polynomially on $\epsilon^{-1}$.	
\end{lemma}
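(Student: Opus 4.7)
The plan is to prove the uniform operator-norm bound via the standard three-step argument used for random ReLU networks: (1) compute the expectation of the random matrix on the left-hand side and verify it equals $\mQ_{\vx,\vy}$; (2) establish pointwise concentration for any fixed pair $(\vx,\vy)$; (3) promote this to a uniform bound by a covering argument on $\mathbb{S}^{n-1}\times\mathbb{S}^{n-1}$, paying extra attention to the discontinuity of the indicator functions under perturbation of $\vx$ and $\vy$.

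\textbf{Expectation and pointwise concentration.} For Step (1), rotational invariance of $\mathcal{N}(\vzero, \mI_n/\l)$ lets us work in a basis aligned with $\text{span}(\vx,\vy)$. The expectation decouples into a block on $\text{span}(\vx,\vy)$ and an isotropic block on its orthogonal complement. The orthogonal block equals $\tfrac{\pi-\theta_0}{2\pi}\cdot \mI_{n-2}$ (after the $1/\l$ scaling of the rows cancels the $\sum$ of length $\l$), reflecting the probability $(\pi-\theta_0)/(2\pi)$ that a planar standard Gaussian lies in the wedge determined by $\vx$ and $\vy$; the in-plane block is a direct polar integral producing the $\mM_{\hat{\vx}\leftrightarrow\hat{\vy}}$ term. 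Adding them yields $\mQ_{\vx,\vy}$. For Step (2), fix unit vectors $\vu,\vv\in\R^n$ and consider the bilinear form
\begin{align*}
Z_{\vu,\vv}(\vx,\vy) \;=\; \sum_{i=1}^{\l} \vone_{\vw_i\cdot\vx>0}\,\vone_{\vw_i\cdot\vy>0}\,(\vw_i^\intercal \vu)(\vw_i^\intercal \vv).
\end{align*}
Each summand is a product of a bounded indicator with a sub-exponential scalar of scale $O(1/\l)$, so Bernstein's inequality yields $|Z_{\vu,\vv}(\vx,\vy)-\E Z_{\vu,\vv}(\vx,\vy)|\le \epsilon/4$ with probability at least $1-2\exp(-c_0\l\epsilon^2)$.

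\textbf{Net argument.} For Step (3), take an $\eta$-net $\mathcal{N}$ of $\mathbb{S}^{n-1}$ of cardinality at most $(3/\eta)^n$ with $\eta \asymp \epsilon/n$. A union bound over quadruples in $\mathcal{N}^4$ gives simultaneous concentration for every net point with failure probability at most $\exp(4n\log(3/\eta) - c_0\l\epsilon^2)$, which is absorbed into $8\l e^{-\gamma n}$ once $\l \ge c\, n\log n$ with $c,\gamma^{-1}$ polynomial in $1/\epsilon$. To pass from net points to arbitrary $(\vx,\vy,\vu,\vv)$, the bilinear form in $(\vu,\vv)$ is Lipschitz with constant $\|\sum_i \vw_i\vw_i^\intercal\|=O(1)$ on the same high-probability event, so perturbations of $(\vu,\vv)$ contribute $O(\eta)$. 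Perturbations of $(\vx,\vy)$ are the delicate part: the number of indices whose indicator flips between $\vx$ and its closest net point $\vx'$ is at most $\#\{i:|\vw_i\cdot\vx'|\le \eta\}$, and a separate concentration bound on this count (uniform over $\vx'\in\mathcal{N}$) gives $O(\l\eta)$ flipped rows with high probability; since each flipped row contributes $O(n/\l)$ in operator norm, the total perturbation is $O(n\eta)\le\epsilon/2$ by our choice of $\eta$.

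\textbf{Main obstacle.} The main technical obstacle is precisely the indicator discontinuity in Step (3): $\vx\mapsto \vone_{\vw_i\cdot\vx>0}$ is not Lipschitz, so the naive spherical covering needs to be supplemented by a boundary-counting concentration inequality. Balancing $\eta\asymp \epsilon/n$ against the Bernstein exponent $c_0\l\epsilon^2$ and the net log-cardinality $4n\log(3/\eta)$ forces the polynomial dependence of $c$ and $\gamma^{-1}$ on $\epsilon^{-1}$, and yields exactly the stated sample complexity $\l > c\, n\log n$.
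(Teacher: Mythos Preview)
The paper does not prove this lemma; it simply refers to \cite{Hand2017DCS}. However, the paper's own proof of the truncated analogue (Lemma~\ref{lem:WDC_truncated}) explicitly follows that reference, so one can read off what the cited argument does: rather than count boundary crossings, it \emph{smooths} the indicators. One introduces the piecewise-linear approximations $t_{-\epsilon}$ and $t_{\epsilon}$, defines
\[
H_{\pm\epsilon}(\vx,\vy)=\sum_{i=1}^{\l} t_{\pm\epsilon}(\vw_i^\intercal\vx)\,t_{\pm\epsilon}(\vw_i^\intercal\vy)\,\vw_i\vw_i^\intercal,
\]
and uses the sandwich $H_{\epsilon}\preceq \mW_{+,\vx}^\intercal\mW_{+,\vy}\preceq H_{-\epsilon}$. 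Since $t_{\pm\epsilon}$ is $1/\epsilon$-Lipschitz, each $H_{\pm\epsilon}$ is genuinely Lipschitz in $(\vx,\vy)$ (with constant $\propto \|\mW\|^2\max_i\|\vw_i\|/\epsilon$), and the covering argument becomes entirely routine; one then checks that $\E H_{\pm\epsilon}$ is within $O(\epsilon)$ of $\mQ_{\vx,\vy}$.

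Your route is different: you keep the hard indicators and absorb the discontinuity by a separate concentration bound on the number of rows whose sign flips under an $\eta$-perturbation of $\vx$. This also works, but your bookkeeping has a slip. A sign flip requires $|\vw_i\cdot\vx'|\le \|\vw_i\|\,\eta$, not $|\vw_i\cdot\vx'|\le\eta$; with $\|\vw_i\|\approx\sqrt{n/\l}$ and $\vw_i\cdot\vx'\sim\mathcal{N}(0,1/\l)$, the expected number of flipped rows is $O(\l\sqrt{n}\,\eta)$, not $O(\l\eta)$. The resulting operator-norm perturbation is then $O(n^{3/2}\eta)$, forcing $\eta\asymp \epsilon/n^{3/2}$ rather than $\epsilon/n$. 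This does not damage the final conclusion---the net log-cardinality is still $O(n\log(n/\epsilon))$ and $\l\gtrsim cn\log n$ with $c$ polynomial in $\epsilon^{-1}$ suffices---but the constants you wrote are off. The smoothing approach of \cite{Hand2017DCS} sidesteps this boundary-counting step entirely, which is its main advantage; your approach is more hands-on but requires the extra concentration inequality on the slab counts uniformly over the net.
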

We now state a lemma similar to Lemma \ref{lem:WDC} which applies to truncated random variable. The proof follows the proof of lemma \ref{lem:WDC} in \cite{Hand2017DCS}.
\begin{lemma}\label{lem:WDC_truncated}
Fix $0<\epsilon<1$. Let $\mW \in \R^{\l\times n}$ where $i$th row of $\mW$ satisfy $\vw_{i}^\intercal = \vw^{\intercal} \cdot \vone_{\|\vw\|_2\leq 3\sqrt{n/\l}}$ and $\vw \sim \mathcal{N}(\vzero,\tfrac{1}{\l}\mI_{n})$. If $\l>cn\log n$, then with probability at least $1-8ne^{-\gamma n}$, $\mW$ satisfies the WDC with constant $\epsilon$ and $\alpha$. Here $c, \gamma^{-1}$ are constants that depend only polynomially on $\epsilon^{-1}$ and
\begin{equation}
	\alpha= \frac{\Gamma\left(\frac{n+2}{2}\right)-\Gamma\left(\frac{n+2}{2},\frac{9n}{2}\right)	}{\Gamma\left(\frac{n+2}{2}\right)},\label{eq:gamma_trunc_WDC}
\end{equation}
where $\Gamma$ is the Gamma function.
\end{lemma}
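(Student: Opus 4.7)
The plan is to follow the structure of Lemma \ref{lem:WDC} from \cite{Hand2017DCS}, adapting the expectation calculation to the truncated distribution. The argument has three stages: first, compute the expectation of $\sum_{i=1}^{\l} \vone_{\vw_i\cdot\vx>0}\vone_{\vw_i\cdot\vy>0}\vw_i\vw_i^\intercal$ and show it equals $\alpha\mQ_{\vx,\vy}$ for the $\alpha$ in \eqref{eq:gamma_trunc_WDC}; second, establish pointwise concentration for each fixed pair $(\vx,\vy)$; third, upgrade to a uniform bound over all nonzero $\vx,\vy$. Stages two and three go through essentially unchanged from the untruncated case; indeed, truncation bounds the rows uniformly by $3\sqrt{n/\l}$, which only helps concentration. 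The substantive new content is stage one.

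For the expectation, I would use rotational invariance of both the Gaussian law and the truncation event $\{\|\vw\|_2\leq 3\sqrt{n/\l}\}$ to reduce to the case where $\vx,\vy$ lie in the span of $\ve_1,\ve_2$. Writing $\vw=(\vw^{(1)},\vw^{(2)})$ with $\vw^{(1)}\in\R^2$ in that plane and $\vw^{(2)}\in\R^{n-2}$ orthogonal, the indicators depend only on $\vw^{(1)}$, and the off-diagonal blocks of the expected matrix vanish under the reflection $\vw^{(2)}\mapsto -\vw^{(2)}$. The $(n-2)\times(n-2)$ block is a scalar multiple of $\mI_{n-2}$ by rotational symmetry in $\vw^{(2)}$; its trace, together with the entries of the $2\times 2$ block computed in polar coordinates on $\vw^{(1)}$, both reduce to the same radial integral $\E[\|\vw\|_2^2\,\vone_{\|\vw\|_2^2\leq 9n/\l}]$ combined with the same angular averages as in the untruncated proof. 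Since $\l\|\vw\|_2^2\sim\chi_n^2$, the $\|\vw\|_2^2$ weighting shifts the effective degrees of freedom from $n$ to $n+2$, which is precisely what produces the ratio of incomplete Gamma functions in \eqref{eq:gamma_trunc_WDC}.

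Pointwise concentration follows from a matrix Bernstein inequality: each summand is rank one with operator norm at most $9n/\l$ deterministically, so standard sub-exponential bounds give $\PP(\|\cdot-\alpha\mQ_{\vx,\vy}\|\geq\epsilon/2)\leq 2n\exp(-c\epsilon^2\l/n)$. For the uniform step, I would take an $\eta$-net of $\mathbb{S}^{n-1}\times\mathbb{S}^{n-1}$ of cardinality $(3/\eta)^{2n}$ and control the fluctuation between net points by Lipschitzness of the continuous part $\alpha\mQ_{\vx,\vy}$ together with a hyperplane-counting bound on the number of rows $\vw_i$ whose indicator can flip in an $\eta$-neighborhood of a net point. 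A union bound then closes provided $\l\geq cn\log n$ with $c,\gamma^{-1}$ depending polynomially on $\epsilon^{-1}$, exactly as in \cite{Hand2017DCS}.

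The main obstacle is the expectation in stage one. In the untruncated case, $\vw^{(1)}$ and $\vw^{(2)}$ are independent and the calculation factors immediately; here the truncation couples them through $\|\vw\|_2^2=\|\vw^{(1)}\|_2^2+\|\vw^{(2)}\|_2^2\leq 9n/\l$. I would disentangle this by conditioning on the pair of squared norms, whose joint law factors into two independent chi-squared densities, and then evaluating the coupled radial integral directly. The closed-form constant in \eqref{eq:gamma_trunc_WDC} is exactly the output of that integration, and once it is in hand the net and Bernstein steps are black-box reductions from the untruncated argument.
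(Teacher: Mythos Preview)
Your proposal is correct and lands on the right constant $\alpha$, but the route differs from the paper's in one structural respect. You work directly with the discontinuous sum $\sum_i \vone_{\vw_i\cdot\vx>0}\vone_{\vw_i\cdot\vy>0}\vw_i\vw_i^\intercal$: you compute its expectation exactly via the radial--angular factorization $\vw=r\vu$ (the truncation is radial, the indicators are angular, so the two decouple and the radial integral yields the incomplete-Gamma ratio), apply matrix Bernstein using the deterministic bound $\|\vw_i\|_2^2\le 9n/\l$, and then handle the discontinuity at the net step by a hyperplane-counting bound on indicator flips. The paper instead sandwiches the discontinuous sum between two \emph{continuous} surrogates $H_{\epsilon}\preceq \mW_{+,\vx}^\intercal\mW_{+,\vy}\preceq H_{-\epsilon}$, built from the piecewise-linear ramps $t_{\pm\epsilon}$, and proves matching one-sided bounds for each (Lemmas~\ref{lem:WDC_trunc_pos} and~\ref{lem:WDC_trunc_neg}). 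Because $t_{\pm\epsilon}$ is $1/\epsilon$-Lipschitz, the paper's net-to-sphere step is a straightforward Lipschitz estimate rather than a combinatorial count of sign changes, and the concentration step invokes Vershynin's sub-Gaussian covariance estimation rather than matrix Bernstein. Your approach is arguably cleaner at the expectation stage (the polar factorization makes the origin of $\alpha$ transparent, whereas the paper simply asserts $\mathbb{E}[\vone_{\vw_i\cdot\vx\ge 0}\vone_{\vw_i\cdot\vy\ge 0}\vw_i\vw_i^\intercal]=\alpha\mQ_{\vx,\vy}$), while the paper's continuous-sandwich trick buys a simpler uniformity argument at the cost of proving two one-sided lemmas instead of one two-sided bound.
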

The WDC condition with constant $\epsilon$ and $\alpha$ can be written as
\[\|\mW_{+,\vx}^\intercal\mW_{+,\vy} - \alpha\mQ_{\vx,\vy}\|\leq \epsilon
\]
for all nonzero $\vx, \vy \in \R^n$. We note that 
\[\mW_{+,\vx}^\intercal\mW_{+,\vy} = \sum_{i = 1}^{\l} \vone_{\wit\vx}\vone_{\wit\vy}\vw_{i}\wit\]
and it is not continuous in $\vx$ and $\vy$. So, we consider an arbitrarily good continuous approximation of $\mW_{+,\vx}^\intercal\mW_{+,\vy}$. Let
\begin{align*}
	t_{-\epsilon}(z) = \left\{
	\begin{array}{ll}
	0 & z\leq -\epsilon,\\
	1+\frac{z}{\epsilon} & -\epsilon\leq z\leq 0,\\
	1 & z\leq 0,	
	\end{array}
\right. \quad \text{and} \quad 
	t_{\epsilon}(z) = \left\{
	\begin{array}{ll}
	0 & z\leq 0,\\
	\frac{z}{\epsilon} & 0\leq z\leq \epsilon,\\
	1 & z\geq \epsilon.	
	\end{array}
\right.
\end{align*}
and define
\begin{align*}
H_{-\epsilon}(\vx\vy)&:=\sum_{i=1}^\l t_{-\epsilon}(\wit\vx) t_{-\epsilon}(\wit\vy)\vw_i\wit,\\
H_{\epsilon}(\vx,\vy)&:=\sum_{i=1}^\l t_{\epsilon}(\wit\vx) t_{\epsilon}(\wit\vy)\vw_i\wit.
\end{align*}
The proof of Lemma \ref{lem:WDC_truncated} follows from the follow two lemmas. We first provide an upper bound on the singular values of $H_{-\epsilon}(\vx,\vy)$.
\begin{lemma}\label{lem:WDC_trunc_pos}
Fix $0<\epsilon<1$. Let ${\mW} \in \R^{\l\times n}$ where $i$th row of $\mW$ satisfy $\wit = \vw^{\intercal} \cdot \vone_{\|\vw\|_2\leq 3\sqrt{n}}$ and $\vw \sim \mathcal{N}(\vzero,\mI_{n})$. If $\l>cn\log n$, then with probability at least $1-4ne^{-\gamma n}$,
\begin{align*}	
&\forall (\vx,\vy)\neq(\boldsymbol{0},\boldsymbol{0}),\quad H_{-\epsilon}(\vx,\vy)\preceq \alpha\l \mQ_{\vx,\vy}+3\l\epsilon I_{n}.
\end{align*}
Here, $c$ and $\gamma^{-1}$ are constants that depend only polynomially on $\epsilon^{-1}$ and $\alpha$ is 
\begin{equation}
	\alpha= \frac{\Gamma\left(\frac{n+2}{2}\right)-\Gamma\left(\frac{n+2}{2},\frac{9n}{2}\right)	}{\Gamma\left(\frac{n+2}{2}\right)},
\end{equation}
where $\Gamma$ is the Gamma function.
\end{lemma}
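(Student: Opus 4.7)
The plan is to adapt the proof of Lemma \ref{lem:WDC} from \cite{Hand2017DCS} to the truncated setting, in three steps: first bound the expectation $\E[H_{-\epsilon}(\vx,\vy)]$, then establish pointwise concentration via a matrix concentration inequality, and finally promote the pointwise bound to a uniform one over all $(\vx,\vy)\ne(\vzero,\vzero)$ via an $\eta$-net combined with Lipschitz continuity of $H_{-\epsilon}$ in its arguments. The one-sided (PSD) form of the conclusion matches the fact that the soft surrogate $t_{-\epsilon}$ dominates the half-space indicator, so the ``$+3\l\epsilon \mI_n$'' slack accounts for the softening error plus the concentration error.

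For the expectation, I would exploit the polar decomposition $\vw = R\vu$ with $R=\|\vw\|_2$ and $\vu$ uniform on $\mathbb{S}^{n-1}$, which are independent for an isotropic Gaussian. Since the truncation event $\{\|\vw\|_2\leq 3\sqrt{n}\}$ depends only on $R$, the radial and angular factors separate:
\begin{align*}
\E\left[t_{-\epsilon}(\wit\vx)\,t_{-\epsilon}(\wit\vy)\,\vw_i\wit\,\vone_{\|\vw_i\|_2\leq 3\sqrt{n}}\right] = \E\left[R^2\vone_{R\leq 3\sqrt{n}}\right]\cdot \E\left[t_{-\epsilon}(R\vu\cdot\vx)\,t_{-\epsilon}(R\vu\cdot\vy)\,\vu\vu^\intercal\right].
\end{align*}
The radial factor equals $n\alpha$ by direct integration against the chi-squared density, which is exactly the origin of the ratio in \eqref{eq:gamma_trunc_WDC}. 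The angular factor, with the true half-space indicator in place of $t_{-\epsilon}$, equals $\frac{1}{n}\mQ_{\hat{\vx},\hat{\vy}}$ by the elementary calculation in \cite{Hand2017DCS}, so the untruncated and unsoftened version gives $\alpha\mQ_{\vx,\vy}$ per summand. Replacing the indicator by $t_{-\epsilon}$ only adds PSD mass supported on a Gaussian strip of width $O(\epsilon)$ in each of the directions $\hat{\vx},\hat{\vy}$; estimating its spectral norm in a coordinate system aligned with $\vx$ yields an absolute-constant multiple of $\alpha\epsilon$. Summing over $\l$ rows gives $\E[H_{-\epsilon}(\vx,\vy)]\preceq \alpha\l\mQ_{\vx,\vy}+c_0\l\epsilon\mI_n$ with $c_0$ absolute.

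For pointwise concentration, each summand is a rank-one PSD matrix with operator norm at most $\|\vw_i\|_2^2 \leq 9n$ by the truncation, so matrix Bernstein yields $\|H_{-\epsilon}(\vx,\vy)-\E H_{-\epsilon}(\vx,\vy)\|\leq \l\epsilon$ with failure probability exponentially small in $\l/n$ (after absorbing $\epsilon$-dependence into constants). Taking an $\eta$-net of $\mathbb{S}^{n-1}\times\mathbb{S}^{n-1}$ of size $(3/\eta)^{2n}$ and a union bound gives the statement on the net provided $\l\geq cn\log n$. To extend off the net, I would verify that $H_{-\epsilon}(\vx,\vy)$ is Lipschitz in $(\vx,\vy)$ with constant $O(\l n/\epsilon)$, the $1/\epsilon$ coming from the slope of $t_{-\epsilon}$; choosing $\eta$ a small polynomial in $\epsilon$ and $1/n$ closes the argument, and a homogeneity/scaling reduction handles arbitrary nonzero $(\vx,\vy)$. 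The main obstacle is juggling the three simultaneous roles of $\epsilon$ — the width of the softening band, the slack in the spectral inequality, and the resolution of the net — while tracking absolute constants so that the right-hand side comes out to exactly $3\l\epsilon\mI_n$, and absorbing the $1/\epsilon$ Lipschitz blow-up into the polynomial dependence of $c$ and $\gamma^{-1}$ on $\epsilon^{-1}$.
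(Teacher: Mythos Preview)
Your overall plan (expectation bound, pointwise concentration, Lipschitz extension over a net) matches the paper's, and your polar decomposition argument is a clean way to see why the constant comes out to exactly the $\alpha$ in the statement. Two remarks, one minor and one a real gap.

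\medskip
\emph{Minor.} The displayed factorization
\[
\E\!\left[t_{-\epsilon}(\wit\vx)\,t_{-\epsilon}(\wit\vy)\,\vw_i\wit\right]
\;=\;\E\!\left[R^2\vone_{R\le 3\sqrt n}\right]\cdot
\E\!\left[t_{-\epsilon}(R\vu\cdot\vx)\,t_{-\epsilon}(R\vu\cdot\vy)\,\vu\vu^\intercal\right]
\]
is not correct as written: the second factor still depends on $R$, so radial and angular parts do not decouple for $t_{-\epsilon}$. The decoupling you want holds only for the hard indicators $\vone_{\vw\cdot\vx>0}$, which are $0$-homogeneous in $\vw$. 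Your subsequent text makes clear that you intend to pass to indicators first and then control the softening error separately, and that route is fine; just don't write the soft version as a product of expectations. (The paper bypasses polar coordinates entirely and simply uses $t_{-\epsilon}(z)\le \vone_{z>0}+\vone_{-\epsilon\le z\le 0}$ together with $\E[\vone_{\vw\cdot\vx>0}\vone_{\vw\cdot\vy>0}\vw\vw^\intercal]=\alpha\mQ_{\vx,\vy}$ and a strip estimate $\E[\vone_{-\epsilon\le \vw\cdot\vx\le 0}\vw\vw^\intercal]\preceq \tfrac{\alpha\epsilon}{2}\mI_n$.)

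\medskip
\emph{The gap.} Your concentration step cannot deliver the hypothesis $\l>cn\log n$. With matrix Bernstein and the truncation bound $\|\vw_i\wit\|\le 9n$, the per-point failure probability is at best $\exp(-c\epsilon^2\l/n)$, exactly as you say. But then a union bound over a net of size $(3/\eta)^{2n}$ requires
\[
c\epsilon^2\frac{\l}{n}\;\gtrsim\; n\log\frac{1}{\eta},
\]
i.e.\ $\l\gtrsim n^2$ (times logs and $\epsilon^{-2}$), not $\l\gtrsim n\log n$. So the sentence ``a union bound gives the statement on the net provided $\l\ge cn\log n$'' is where the argument breaks. The paper avoids this by \emph{not} using matrix Bernstein: it writes $H_{-\epsilon}(\vx,\vy)=\sum_i \vxi_i\vxi_i^\intercal$ with
\[
\vxi_i \;=\; \sqrt{t_{-\epsilon}(\wit\vx)\,t_{-\epsilon}(\wit\vy)}\;\vw_i,
\]
observes that $|t_{-\epsilon}|\le 1$ forces $\|\vxi_i\|_{\psi_2}$ to be bounded by an \emph{absolute} constant (independent of $n$), and then invokes the sub-Gaussian covariance concentration (Remark~5.40 in \cite{vershynin10in}). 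That yields per-point failure $\exp(-c\epsilon^2\l)$, which survives a union bound over $(3/\eta)^{2n}$ points already when $\l\gtrsim n\log n$. If you swap your Bernstein step for this sub-Gaussian step, the rest of your outline (Lipschitz constant $O(\l\sqrt n/\epsilon)$ and net radius $\eta\asymp \epsilon^2/\sqrt n$) goes through with the claimed sample complexity.
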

\begin{proof} First we bound $\mathbb{E}[H_{-\epsilon}(\vx, \vy)]$ for fixed $\vx,\vy\in \mathcal{S}^{n-1}$. Noting that $t_{-\epsilon}(z)\leq \vone_{z\geq-\epsilon}(z) = \vone_{z>0}(z)+\vone_{-\epsilon\leq z\leq 0}(z)$, we have
	\begin{align*}
		&\mathbb{E}\left[H_{-\epsilon}(\vx,\vy)\right]\\
		\preceq & \mathbb{E}\left[\sum_{i=1}^\l\ind_{\wit\vx\geq -\epsilon} \ind_{\wit \vy\geq -\epsilon}\vw_i\wit\right]\\
		=&\l\mathbb{E}\left[\ind_{\wit\vx\geq -\epsilon} \ind_{\wit \vy\geq -\epsilon}\vw_i\wit\right]\\
		=&\l\mathbb{E}\Big[\left(\ind_{\wit\vx\geq0}\ind_{\wit \vy\geq0}\vw_i\wit\right)\Big]+2\l\mathbb{E}\left[\left(\ind_{-\epsilon\leq\wit\vx\leq0}\vw_i\wit\right)\right].
	\end{align*}
We first note that $\mathbb{E}\left[\ind_{\wit\vx\geq0}\ind_{\wit \vy\geq0}\bi\bit\right] = \alpha \mQ_{\vx,\vy}$ where $\alpha$ satisfies $0.97<\alpha<1$. Also, we have $\mathbb{E}\left[\ind_{-\epsilon\leq\wit\vx\leq0}\vw_i\wit\right]\preceq\frac{\epsilon\alpha}{2}\mI_{n}$. Thus,
	\begin{align}
	\mathbb{E}\left[H_{-\epsilon}(\vx,\vy)\right] &\preceq \alpha\l\cdot\mtrans \mQ_{\vx,\vy}\vy+\epsilon\alpha \l\mI_{n}\notag\\
		&\preceq\alpha\l\cdot\mtrans \mQ_{\vx,\vy}+\epsilon \l\mI_{n}\label{eq:expectation_pos_WDC}
	\end{align}
	
Second, we show concentration of $H_{-\epsilon}(\vx,\vy)$ for fixed $\vx,\vy \in \mathcal{S}^{n -1}$. Let 
\[\vxi_i =\sqrt{t_{-\epsilon}(\wit \vx) t_{-\epsilon}(\wit \vy)\vw_i}.\]
 We have
\begin{align*}
	& H_{-\epsilon}(\vx,\vy) - \mathbb{E}\left[H_{-\epsilon}(\vx,\vy)\right]\\
	=& \sum_{i=1}^\l\Big(t_{-\epsilon}(\wit\vx) t_{-\epsilon}(\wit\vy)\vw_i\wit - \mathbb{E}\left[t_{-\epsilon}(\wit\vx) t_{-\epsilon}(\wit\vy)\vw_i\wit\right]\Big)\\
	& = \sum_{i=1}^\l\left(\vxi_i\vxi_i^\intercal -\mathbb{E}\left[\vxi_i\vxi_i^\intercal\right] \right).
\end{align*}
Note that $\vxi_i$ is  sub-Gaussian for all $i$ and that the sub-Gaussian norm of $\vxi_i$ is bounded from above by an absolute constant which we call $K$. By first part of Remark 5.40 in \cite{vershynin10in}, there exists a $c_K$ and $\gamma_K$ such that for all $t\geq 0$, with probability at least $1- 2e^{-\gamma_K t^2}$, 
\begin{equation*}
\|H_{-\epsilon}(\vx,\vy) - \mathbb{E}\left[H_{-\epsilon}(\vx,\vy)\right]\|\leq \max(\delta,\delta^2)\l, \quad \text{where } \delta = c_K \sqrt{\frac{n}{\l}}+\frac{t}{\sqrt{\l}}.
\end{equation*}
If $\l>(2c_K/\epsilon)^2 n, t = \epsilon\sqrt{\l}/2$, and $\epsilon<1$, we have 
\begin{equation}\label{eq:concentration_pos_WDC}
\|H_{-\epsilon}(\vx,\vy) - \mathbb{E}\left[H_{-\epsilon}(\vx,\vy)\right]\|\leq \epsilon \l
\end{equation}
with probability at least $1- 2e^{-\gamma_K\frac{\epsilon^2\l}{4}}$.

{Third, we bound the Lipschitz constant of $H_{-\epsilon}$. For $\xtilde, \ytilde \in \mathbb{R}^{n}$ we have}
\begin{align*}
	& H_{-\epsilon}(\vx,\vy)-H_{-\epsilon}(\xtilde,\ytilde)\\
	=& \sum_{i=1}^{\l}\Big[t_{-\epsilon}(\wit\vx) t_{-\epsilon}(\wit\vy)-t_{-\epsilon}(\wit\xtilde) t_{-\epsilon}(\wit\ytilde)\Big]\vw_i\wit\\
	=&\sum_{i=1}^\l\Big[t_{-\epsilon}(\wit\vx)\left( t_{-\epsilon}(\wit\vy)-t_{-\epsilon}(\wit\ytilde)\right)\\
	&\quad+t_{-\epsilon}(\wit\ytilde)\left( t_{-\epsilon}(\wit\vx)-t_{-\epsilon}(\wit\xtilde)\right)\Big]\vw_i\wit\\
	=&\mW^\intercal\Big[\diag\left(t_{-\epsilon}(\mW\vx)\right)\diag( (\mW\vy)_+-(\mW\ytilde)_+)\\
	&\quad +\diag\left(t_{-\epsilon}(\mW\ytilde)\right)\diag\left( (\mW\vx)_+-(\mW\xtilde)_+\right)\Big]\mW
\end{align*}
Thus,
\begin{align*}
	&\|H_{-\epsilon}(\vx,\vy)-H_{-\epsilon}(\xtilde,\ytilde)\|\\
	\leq &\|\mW\|^2\Big[\|t_{-\epsilon}(\mW\vy)-t_{-\epsilon}(\mW\ytilde)\|_\infty+\|t_{-\epsilon}(\mW\vx)-t_{-\epsilon}(\mW\xtilde)\|_\infty\Big]\\
	\leq &\|\mW\|^2\Bigg[\max_{i\in[\l]}\left|t_{-\epsilon}(\wit\vy)-t_{-\epsilon}(\wit\ytilde)\right|+\max_{i\in[\l]}\left|t_{-\epsilon}(\wit\vx)-t_{-\epsilon}(\wit\xtilde)\right|\Bigg]\\
	\leq &\|\mW\|^2\Bigg[\max_{i\in[\l]}\frac{1}{\epsilon}\left|\wit(\vy-\ytilde)\right|+\max_{i\in[\l]}\frac{1}{\epsilon}\left|\wit(\vx-\xtilde)\right|\Bigg]\\
	\leq &\|\mW\|^2\Bigg[\frac{1}{\epsilon}\max_{i\in[\l]}\|\vw_i\|_2\left\|\vy-\ytilde\right\|+\frac{1}{\epsilon}\max_{i\in[\l]}\|\vw_i\|_2\left\|\vx-\xtilde\right\|\Bigg]\\
	\leq & \|\mW\|^2\Big[\frac{9}{\epsilon}\sqrt{n}\left\|\vx-\xtilde\right\|+\frac{9}{\epsilon}\sqrt{n}\left\|\vh-\htilde\right\|\Big]
\end{align*}
where the first inequality follows because $|t_{-\epsilon}(z)|\leq 1$ for all $z$, and the third inequality follows because $t_{-\epsilon}(z)$ is $1/\epsilon$-Lipschitz. { Let $E_1$ be the event that $\|\mW\|\leq 3\sqrt{\l}$. By Corollary 5.35 in \cite{vershynin10in}, for $\mA \in \R^{\l\times n}$ with rows of $\mA$ following $\mathcal{N}(\vzero,\mI_{n})$, we have $\mathbb{P}(\|\mA\|\leq 3\sqrt{\l})\geq 1-2e^{-\l/2}$, if $\l\geq n$. As rows of $\mW$ are truncated, we have $\mathbb{P}(E_1)\geq 1-2e^{-\l/2}$, if $\l\geq n$ as well.} On $E_1$, we have
\begin{align}
	&\|H_{-\epsilon}(\vx,\vy)-H_{-\epsilon}(\xtilde,\ytilde)\|\notag\\
	\leq&\frac{27\l\sqrt{n}}{\epsilon}\left[\left\|\vx-\ytilde\right\|+\left\|\vy-\ytilde\right\|\right]\label{eq:Lipschitz_pos_WDC}
\end{align}
for all $\vx,\vy,\xtilde, \ytilde \in \mathcal{S}^{n-1}$.

Finally, we complete the proof by a covering argument. Let $\mathcal{N}_\delta$ be a $\delta$-net on $\mathcal{S}^{n-1}$ such that $|\mathcal{N}_\delta|\leq (3/\delta)^{n}$. Take $\delta = \frac{\epsilon^2}{54\sqrt{n}}$. Combining \eqref{eq:expectation_pos_WDC} and \eqref{eq:concentration_pos_WDC}, we have
\begin{align*}
	\forall (\vx,\vy),\in \mathcal{N}_\delta,\quad H_{-\epsilon}(\vx,\vy)\preceq&\mathbb{E}H_{-\epsilon}(\vx,\vy) + \l\epsilon I_{n}\\
	\preceq&\alpha \l \mQ_{\vx,\vy}+2\l\epsilon I_{n}{\change.}
\end{align*}
with probability at least
\begin{equation*}
1-2|\mathcal{N}_\delta|e^{-\gamma_K\epsilon^2\l/4}\geq 1-2\left(\frac{3}{\delta}\right)^{n}e^{-\gamma_K\epsilon^2\l/4}\geq 1-2e^{-\gamma_K\epsilon^2\l/4+n\log(3\cdot 54\sqrt{n}/\epsilon^2)}.
\end{equation*}
If $\l\geq \tilde{c}n\log(n)$ for some $\tilde{c} =\Omega(\epsilon^2\log \epsilon)$, then this probability is at least $1 - 2e^{-\tilde{\gamma}\l}$ for some $\tilde{\gamma} = O(\epsilon^2)$. For $\vx, \vy \in \mathcal{S}^{n-1}$, let $\xtilde,\ytilde \in \mathcal{N}_\delta$ be such that $\|\vx-\xtilde\|_2\leq\delta$, and $\|\vy-\ytilde\|_2\leq\delta$. By \eqref{eq:Lipschitz_pos_WDC}, we have that
\begin{align*}	
\forall \vx, \vy \neq \vzero,\quad  &H_{-\epsilon}(\vx,\vy)\\
\preceq & H_{-\epsilon}(\xtilde,\ytilde) +\frac{27\l\sqrt{n}}{\epsilon}2\delta \mI_{n}\\
\preceq & \alpha\l \mQ_{\vx,\vy}+3\l\epsilon \mI_{n}.
\end{align*}
In conclusion, the result of this lemma holds if $\l>(2c_K/\epsilon)^2n$ and $\l\geq \tilde{c}(n)\log n$, with probability at least $1-2e^{-\gamma_K\epsilon^2\l/4}-2e^{-\l/2}-2e^{-\tilde{\gamma}\l}>1-6e^{-\gamma \l}$ for some $\gamma = O(\epsilon^2)$ and $\tilde{c} = \Omega(\epsilon^2\log\epsilon)$.
\end{proof}

Next, we now provide an upper bound on the singular values of $G_{\epsilon}(\vh,\vx,\vm,\vy)$.
\begin{lemma}\label{lem:WDC_trunc_neg}
Fix $0<\epsilon<1$. Let ${\mW} \in \R^{\l\times n}$ where $i$th row of $\mW$ satisfy $\wit = \vw^{\intercal} \cdot \vone_{\|\vw\|_2\leq 3\sqrt{n}}$ and $\vw \sim \mathcal{N}(\vzero,\mI_{n})$. If $\l>cn\log n$, then with probability at least $1-4ne^{-\gamma n}$,
\begin{align*}	
&\forall (\vx,\vy)\neq(\boldsymbol{0},\boldsymbol{0}),\quad H_{\epsilon}(\vx,\vy)\succeq \alpha\l \mQ_{\vx,\vy}-3\l\epsilon I_{n}.
\end{align*}
Here, $c$ and $\gamma^{-1}$ are constants that depend only polynomially on $\epsilon^{-1}$ and $\alpha$ is 
\begin{equation}
	\alpha= \frac{\Gamma\left(\frac{n+2}{2}\right)-\Gamma\left(\frac{n+2}{2},\frac{9n}{2}\right)	}{\Gamma\left(\frac{n+2}{2}\right)},
\end{equation}
where $\Gamma$ is the Gamma function.
\end{lemma}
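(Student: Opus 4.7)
The plan is to mirror the proof of Lemma~\ref{lem:WDC_trunc_pos} step for step, swapping the upper approximation $t_{-\epsilon}(z)\le \vone_{z\ge -\epsilon}$ for the lower approximation $t_\epsilon(z)\ge \vone_{z\ge \epsilon}$ and reversing every PSD inequality. The four stages are: (i) a pointwise lower bound on $\mathbb{E}[H_\epsilon(\vx,\vy)]$ for fixed $\vx,\vy\in\mathcal{S}^{n-1}$; (ii) sub-Gaussian concentration of $H_\epsilon(\vx,\vy)$ around its expectation; (iii) a Lipschitz estimate in $(\vx,\vy)$ on the high-probability event $\{\|\mW\|\le 3\sqrt{\l}\}$; and (iv) a covering argument on $\mathcal{S}^{n-1}\times\mathcal{S}^{n-1}$.

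For Step~(i), since $t_\epsilon(\cdot)\ge \vone_{\cdot\ge\epsilon}\ge 0$ and each summand $\vw_i\wit$ is rank-one PSD, monotone scalar weights yield
\begin{align*}
\mathbb{E}[H_\epsilon(\vx,\vy)]\ \succeq\ \l\,\mathbb{E}\!\left[\vone_{\wit\vx\ge\epsilon}\vone_{\wit\vy\ge\epsilon}\,\vw_i\wit\right].
\end{align*}
Decomposing $\vone_{z>0}=\vone_{z\ge\epsilon}+\vone_{0<z<\epsilon}$ in each argument and using $\mathbb{E}[\vone_{\wit\vx>0}\vone_{\wit\vy>0}\vw_i\wit]=\alpha\mQ_{\vx,\vy}$ from the truncated-Gaussian calculation, the three ``slab'' cross-terms are each dominated in PSD order by $\mathbb{E}[\vone_{0<\wit\vx<\epsilon}\vw_i\wit]\preceq \tfrac{\epsilon\alpha}{2}\mI_n$ (verified by splitting $\vw_i$ along $\vx$ and its orthogonal complement and integrating the truncated radial density over a slab of width $\epsilon$). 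This gives $\mathbb{E}[H_\epsilon(\vx,\vy)]\succeq \alpha\l\mQ_{\vx,\vy}-\l\epsilon\mI_n$.

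For Step~(ii), writing $H_\epsilon(\vx,\vy)=\sum_i \vxi_i\vxi_i^\intercal$ with $\vxi_i=\sqrt{t_\epsilon(\wit\vx)t_\epsilon(\wit\vy)}\,\vw_i$, each $\vxi_i$ has sub-Gaussian norm bounded by an absolute constant (truncation only decreases it), so Remark~5.40 of \cite{vershynin10in} gives $\|H_\epsilon(\vx,\vy)-\mathbb{E}H_\epsilon(\vx,\vy)\|\le \l\epsilon$ with probability at least $1-2e^{-\gamma_K\epsilon^2\l/4}$ when $\l\ge (2c_K/\epsilon)^2 n$, identically to the positive case. Steps~(iii) and~(iv) are verbatim copies of the corresponding arguments in Lemma~\ref{lem:WDC_trunc_pos}: $t_\epsilon$ is $1/\epsilon$-Lipschitz, so on $\{\|\mW\|\le 3\sqrt{\l}\}$ one gets $\|H_\epsilon(\vx,\vy)-H_\epsilon(\xtilde,\ytilde)\|\le (27\l\sqrt{n}/\epsilon)(\|\vx-\xtilde\|+\|\vy-\ytilde\|)$, and a $\delta$-net of $\mathcal{S}^{n-1}$ with $\delta=\epsilon^2/(54\sqrt{n})$ combined with a union bound over pairs and the Lipschitz extension upgrades the pointwise lower bound to the uniform one, losing at most an additional $\l\epsilon\mI_n$ of slack and yielding $H_\epsilon(\vx,\vy)\succeq \alpha\l\mQ_{\vx,\vy}-3\l\epsilon\mI_n$ for all nonzero $\vx,\vy$.

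I expect the main obstacle to be the PSD-sense slab estimate in Step~(i). Bounding $\mathbb{E}[\vone_{0<\wit\vx<\epsilon}]=O(\epsilon)$ as a scalar is immediate, but the matrix statement $\mathbb{E}[\vone_{0<\wit\vx<\epsilon}\vw_i\wit]\preceq \tfrac{\epsilon\alpha}{2}\mI_n$ requires the observation that the conditional second moment of $\vw_i$ along $\vx$ is $O(\epsilon^2)$ (since the slab has width $\epsilon$) while the orthogonal-complement contribution is $O(\epsilon)$ times an isotropic factor; one then has to confirm the constants are compatible with the truncation factor $\alpha$ in \eqref{eq:gamma_trunc_WDC}. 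Everything else is bookkeeping already present in Lemma~\ref{lem:WDC_trunc_pos}.
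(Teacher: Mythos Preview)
Your proposal is correct and follows essentially the same approach as the paper's proof. The paper also reduces the expectation step to the identity $\mathbb{E}[\vone_{\wit\vx>0}\vone_{\wit\vy>0}\vw_i\wit]=\alpha\mQ_{\vx,\vy}$ plus slab corrections of size $O(\epsilon)\mI_n$, and then literally invokes ``the same argument as in Lemma~\ref{lem:WDC_trunc_pos}'' for the concentration, Lipschitz, and covering steps; your write-up of those three steps matches the paper verbatim, including the constants $27\l\sqrt{n}/\epsilon$ and the net radius $\epsilon^2/(54\sqrt{n})$.
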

\begin{proof} First we bound $\mathbb{E}[H_{-\epsilon}(\vx, \vy)]$ for fixed $\vx,\vy\in \mathcal{S}^{n-1}$. Noting that $t_{\epsilon}(z)\geq \vone_{z>0}(z)-\vone_{-\epsilon\leq z\leq 0}(z)$ for all $z$, we have
	\begin{align*}
		&\mathbb{E}\left[H_{\epsilon}(\vx,\vy)\right]\\
		\succeq & \l\mathbb{E}\Big[\left(\ind_{\wit\vx\geq0}\ind_{\wit \vy\geq0}\vw_i\wit\right)\Big]-2\l\mathbb{E}\left[\left(\ind_{-\epsilon\leq\wit\vx\leq0}\vw_i\wit\right)\right].
	\end{align*}
We first note that $\mathbb{E}\left[\ind_{\wit\vx\geq0}\ind_{\wit \vy\geq0}\bi\bit\right] = \alpha \mQ_{\vx,\vy}$ where $\alpha$ satisfies $0.97<\alpha<1$. Also, we have $\mathbb{E}\left[\ind_{-\epsilon\leq\wit\vx\leq0}\vw_i\wit\right]\preceq\frac{\epsilon\alpha}{2}\mI_{n}$. Thus,
	\begin{align}
	\mathbb{E}\left[H_{-\epsilon}(\vx,\vy)\right] &\succeq \alpha\l\cdot\mtrans \mQ_{\vx,\vy}\vy-\epsilon\alpha \l\mI_{n}\notag\\
		&\succeq\alpha\l\cdot\mtrans \mQ_{\vx,\vy}-\epsilon \l\mI_{n}
	\end{align}

Second, the same argument as in Lemma \ref{lem:WDC_trunc_pos} provides that for fixed $\vx,\vy \in \mathcal{S}^{n -1}$, if $\l>(2c_K/\epsilon)^2 n$, then we have with probability at least $1- 2e^{-\gamma_K\frac{\epsilon^2\l}{4}}$,
\begin{equation}\label{eq:concentration_pos_WDC}
\|H_{-\epsilon}(\vx,\vy) - \mathbb{E}\left[H_{-\epsilon}(\vx,\vy)\right]\|\leq \epsilon \l
\end{equation}

Third, same argument as in Lemma \ref{lem:WDC_trunc_pos} provides on the event $E_1$, we have 
\begin{align*}
	\|H_{-\epsilon}(\vx,\vy)-H_{-\epsilon}(\xtilde,\ytilde)\|\leq&\frac{27\l\sqrt{n}}{\epsilon}\left[\left\|\vx-\ytilde\right\|+\left\|\vy-\ytilde\right\|\right]
\end{align*}
for all $\vx,\vy,\xtilde, \ytilde \in \mathcal{S}^{n-1}$.

Finally, we complete the proof by an identical covering argument  as in Lemma \ref{lem:WDC_trunc_pos}. We have if $\l\geq c_0n\log n$ then with probability at least $1- 6e^{-\gamma\l}$,
\begin{align*}	
\forall \vx, \vy \neq \vzero,\quad  H_{\epsilon}(\vx,\vy)\succeq  \alpha\l \mQ_{\vx,\vy}-3\l\epsilon \mI_{n}.
\end{align*}
\end{proof}

\subsection{Proof of joint-WDC condition}
We now state a result that states random gaussian matrices with truncated rows satisfy joint-WDC. 
\begin{lemma}\label{lem:joint_WDC_trunc}
Fix $0<\epsilon<1$. Let ${\mB} \in \R^{\l\times n}$ where $i$th row of $\mB$ satisfy $\bit = \vb^{\intercal} \cdot \vone_{\|\vb\|_2\leq 3\sqrt{n/\l}}$ and $\vb \sim \mathcal{N}(\vzero,\mI_{n}/\l)$. Similarly, let ${\mC} \in \R^{\l\times p}$ where $i$th row of $\mC$ satisfy $\cit = \vc^{\intercal} \cdot\boldsymbol{1}_{\|\vc\|_2\leq 3\sqrt{p/\l}}$ and $\vc \sim \mathcal{N}(\vzero,\mI_{p}/\l)$. If $\l>c((n\log n)^2+(p\log p)^2)$, then with probability at least $1-8e^{-\gamma \l}$, $\mB$ and $\mC$ satisfy joint-WDC with constants $\epsilon$ and $\alpha=\alpha_1\cdot\alpha_2$. Here, $c$ and $\gamma^{-1}$ are constants that depend only polynomially on $\epsilon^{-1}$ and 
\begin{equation}\label{eq:gamma_trunc}
\alpha_1 = \frac{\Gamma\left(\frac{n+2}{2}\right)-\Gamma\left(\frac{n+2}{2},\frac{9n}{2}\right)	}{\Gamma\left(\frac{n+2}{2}\right)} \text{ and } \alpha_2 = \frac{\Gamma\left(\frac{p+2}{2}\right)-\Gamma\left(\frac{p+2}{2},\frac{9p}{2}\right)	}{\Gamma\left(\frac{p+2}{2}\right)},
\end{equation}
where $\Gamma$ is the Gamma function.
\end{lemma}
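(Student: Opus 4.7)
The plan is to mirror the covering-net argument used in Lemmas~\ref{lem:WDC_trunc_pos} and~\ref{lem:WDC_trunc_neg}, now carried out jointly in the four vector arguments $\vh,\vx\in\mathcal{S}^{n-1}$ and $\vm,\vy\in\mathcal{S}^{p-1}$. Set $D=\diag(\mC_{+,\vm}\vm\odot\mC_{+,\vy}\vy)$ and $X_i=D_{ii}\,\vone_{\vb_i\cdot\vh>0}\vone_{\vb_i\cdot\vx>0}\vb_i\vb_i^{\intercal}$, so the object of interest is $\sum_{i=1}^{\l}X_i$. Since the pairs $(\vb_i,\vc_i)$ are independent across $i$ and, within each pair, $\vb_i$ is independent of $\vc_i$, I can compute the expectation by conditioning on $\mC$. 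The truncated-Gaussian computation used in the proof of Lemma~\ref{lem:WDC_trunc_pos} gives $\mathbb{E}[\vone_{\vb\cdot\vh>0}\vone_{\vb\cdot\vx>0}\vb\vb^{\intercal}]=\tfrac{\alpha_1}{\l}\mQ_{\vh,\vx}$, and a direct bivariate Gaussian calculation (with truncation of $\vc$) gives $\mathbb{E}[(\vc\cdot\vm)_{+}(\vc\cdot\vy)_{+}]=\tfrac{\alpha_2}{\l}\vm^{\intercal}\mQ_{\vm,\vy}\vy$, using $\vm^{\intercal}\mQ_{\vm,\vy}\vy=\tfrac{\|\vm\|\|\vy\|}{2\pi}(\sin\theta_0+(\pi-\theta_0)\cos\theta_0)$. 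Combining the two yields $\mathbb{E}[\sum_{i}X_i]=\tfrac{\alpha_1\alpha_2}{\l}\vm^{\intercal}\mQ_{\vm,\vy}\vy\cdot\mQ_{\vh,\vx}$, which is exactly the joint-WDC target with $\alpha=\alpha_1\alpha_2$.

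Next I would prove concentration at a fixed tuple $(\vh,\vx,\vm,\vy)$ in two stages. First, a matrix Bernstein bound for the deviation of $\sum_{i}X_i$ from $\mathbb{E}[\sum_{i}X_i\mid\mC]=\tfrac{\alpha_1}{\l}\mQ_{\vh,\vx}\trace(D)$: the row truncations $\|\vb_i\|_2\le 3\sqrt{n/\l}$ and the bound $D_{ii}\le 9p/\l$ imply $\|X_i\|\le 81np/\l^{2}$, while the conditional matrix variance $\|\sum_{i}\mathbb{E}[X_i^{2}\mid\mC]\|$ is controlled by $(n/\l^2)\cdot\max_{i}D_{ii}\cdot\trace(D)=O(np/\l^{3})$ on the typical event $\trace(D)=O(1)$. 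Second, a scalar Bernstein bound handles $\trace(D)=\sum_{i}(\vc_i\cdot\vm)_{+}(\vc_i\cdot\vy)_{+}$, a sum of i.i.d.\ nonnegative variables each bounded by $9p/\l$. Combining the two deviations establishes the pointwise inequality $\|\sum_{i}X_i-\tfrac{\alpha}{\l}\vm^{\intercal}\mQ_{\vm,\vy}\vy\mQ_{\vh,\vx}\|\le\epsilon/\l$ with probability at least $1-c\exp(-\gamma\l\epsilon^{2}/\max(n,p))$.

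Finally, I would promote this pointwise bound to a uniform statement over the product of the four spheres via a $\delta$-net of size at most $(3/\delta)^{2n+2p}$. Because $\vone_{\vb_i\cdot\vh>0}$ and $(\vc_i\cdot\vm)_{+}$ are not continuous in their arguments, I would insert the smoothed surrogates $t_{\pm\delta}$ used in Lemmas~\ref{lem:WDC_trunc_pos} and~\ref{lem:WDC_trunc_neg} (and analogous smoothings of the positive-part map in the $\mC$ factor) to obtain continuous upper and lower envelopes that sandwich the target. On the high-probability event $\{\|\mB\|\le 3\sqrt{\l}\}\cap\{\|\mC\|\le 3\sqrt{\l}\}$, which holds except with probability at most $4e^{-\l/2}$, these envelopes are Lipschitz in each of the four arguments with a constant polynomial in $n,p,\delta^{-1}$ obtained by applying the row truncations $\|\vb_i\|_2\le 3\sqrt{n/\l}$ and $\|\vc_i\|_2\le 3\sqrt{p/\l}$ separately to the outer bilinear form and to the diagonal factor. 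Choosing $\delta$ to balance this Lipschitz slack against $\epsilon/\l$ and applying a union bound over the net yields the joint-WDC uniformly, provided $\l\ge c((n\log n)^{2}+(p\log p)^{2})$ with $c,\gamma^{-1}$ depending polynomially on $\epsilon^{-1}$.

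The main obstacle is the Lipschitz/smoothing step: unlike the single-matrix WDC, each perturbation of $(\vh,\vx,\vm,\vy)$ affects both the diagonal matrix $D$ (through $\vm,\vy$) and the outer bilinear form $\mB_{+,\vh}^{\intercal}(\cdot)\mB_{+,\vx}$ (through $\vh,\vx$), so the triple product must be controlled after independently perturbing all four arguments while keeping track of the truncation bounds on the rows. Obtaining a Lipschitz constant that is polynomial rather than exponential in $n,p$ is exactly what forces the quadratic dependence $\l=\Omega((n\log n)^{2}+(p\log p)^{2})$ in the hypothesis; once that bound is in place, the covering and union-bound step proceeds in direct parallel to the corresponding step in the preceding truncated-WDC lemmas.
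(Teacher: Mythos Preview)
Your overall architecture---expectation computation, pointwise concentration, Lipschitz control of a smoothed surrogate, then a net/union bound---is exactly the paper's plan (Lemmas~\ref{lem:joint_WDC_trunc_pos} and~\ref{lem:joing_WDC_trunc_neg}).  Two points, however, separate your sketch from the paper's argument, and the first is a real gap.

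\textbf{Concentration step.}  You propose a two-stage matrix Bernstein: condition on $\mC$, bound $\|\sum_i X_i-\mathbb{E}[\sum_i X_i\mid\mC]\|$ via matrix Bernstein, then control $\trace(D)$ by a scalar Bernstein.  With your own variance estimate $\sigma^2=O(np/\l^{3})$ and $R=O(np/\l^{2})$, matrix Bernstein at deviation $t=\epsilon/\l$ gives failure probability $\exp(-c\,\epsilon^{2}\l/(np))$, not the $\exp(-c\,\epsilon^{2}\l/\max(n,p))$ you claim.  After the union bound over a net of cardinality $(3/\delta)^{2(n+p)}$ this forces $\l\gtrsim np(n+p)/\epsilon^{2}$, i.e.\ \emph{cubic} in the latent dimension, strictly worse than the quadratic hypothesis $\l>c((n\log n)^2+(p\log p)^2)$.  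The paper avoids this loss by a single-stage argument: it writes $\vxi_i=\sqrt{t_{-\epsilon}(\bit\vh)\,t_{-\epsilon}(\bit\vx)\,(\cit\vm)_+(\cit\vy)_+}\,\bi$, observes that (thanks to the truncation $\|\ci\|_2\le 3\sqrt{p}$) these are independent sub-Gaussian rows with $\|\vxi_i\|_{\psi_2}=O(\sqrt{n})$, and then invokes Corollary~\ref{cor:singular_noniso}, whose constants scale like $c_K\sim 1/(K^{2}\log K)$ because of the sharpened Bernstein inequality in Theorem~\ref{thm:yaniv_berns}.  That yields pointwise failure probability $\exp(-c\,\epsilon^{2}\l/(n\log n))$, which \emph{does} survive the union bound under the stated quadratic hypothesis.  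Without this single-stage sub-Gaussian step (or something equivalent that removes the $np$ in the exponent), your route does not reach the lemma as stated.

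\textbf{Smoothing step.}  A minor correction: the map $z\mapsto z_+$ is $1$-Lipschitz, so $(\cit\vm)_+$ is already continuous in $\vm$ and needs no surrogate.  The paper introduces the $t_{\pm\epsilon}$ smoothing only for the indicators $\vone_{\bit\vh>0}$ and $\vone_{\bit\vx>0}$; the Lipschitz bound in the $\vm,\vy$ arguments comes directly from $|(\cit\vm)_+-(\cit\mtilde)_+|\le|\cit(\vm-\mtilde)|\le\|\ci\|_2\|\vm-\mtilde\|_2$.
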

The proof of Lemma \ref{lem:joint_WDC_trunc} follows directly from Lemmas \ref{lem:joint_WDC_trunc_pos} and \ref{lem:joing_WDC_trunc_neg}. Using Corollary \ref{cor:singular_noniso}, we provide a concentration result of $\Bh^{\intercal}\diag(\Cm\vm)\diag(\Cy\vy)\Bx$, which is part of the joint-WDC condition. We note that
\begin{align*}
\Bh^{\intercal}\diag(\Cm\vm)\diag(\Cy\vy)\Bx =\sum_{i=1}^\l\boldsymbol{1}_{\bit \vh>0}\boldsymbol{1}_{\bit \vx>0}(\cit \vm)_+ (\cit \vy)_+ \bi\bit
\end{align*}
and it is not continuous in $\vh$ and $\vx$. So, we consider an arbitrarily good continuous approximation of $\Bh^{\intercal}\diag(\Cm\vm)\diag(\Cy\vy)\Bx$. Let
\begin{align*}
	t_{-\epsilon}(z) = \left\{
	\begin{array}{ll}
	0 & z\leq -\epsilon,\\
	1+\frac{z}{\epsilon} & -\epsilon\leq z\leq 0,\\
	1 & z\leq 0,	
	\end{array}
\right. \quad \text{and} \quad 
	t_{\epsilon}(z) = \left\{
	\begin{array}{ll}
	0 & z\leq 0,\\
	\frac{z}{\epsilon} & 0\leq z\leq \epsilon,\\
	1 & z\geq \epsilon.	
	\end{array}
\right.
\end{align*}
and define
\begin{align*}
G_{-\epsilon}(\vh,\vx,\vm,\vy)&:=\sum_{i=1}^\l t_{-\epsilon}(\bit\vh) t_{-\epsilon}(\bit\vx)(\cit\vm)_+ (\cit\vy)_+ \bi\bit,\\
G_{\epsilon}(\vh,\vx,\vm,\vy)&:=\sum_{i=1}^\l t_{\epsilon}(\bit\vh) t_{\epsilon}(\bit\vx)(\cit\vm)_+ (\cit\vy)_+ \bi\bit.
\end{align*}
We now provide an upper bound on the singular values of $G_{-\epsilon}(\vh,\vx,\vm,\vy)$.
\begin{lemma}\label{lem:joint_WDC_trunc_pos}
Fix $0<\epsilon<1$. Let ${\mB} \in \R^{\l\times n}$ where $i$th row of $\mB$ satisfy $\bit = \vb^{\intercal} \cdot \vone_{\|\vb\|_2\leq 3\sqrt{n}}$ and $\vb \sim \mathcal{N}(\vzero,\mI_{n})$. Similarly, let ${\mC} \in \R^{\l\times p}$ where $i$th row of $\mC$ satisfy $\cit = \vc^{\intercal} \cdot\boldsymbol{1}_{\|\vc\|_2\leq 3\sqrt{p}}$ and $\vc \sim \mathcal{N}(\vzero,\mI_{p})$. If $\l>c((n\log n)^2+(p\log p)^2)$, then with probability at least $1-4e^{-\gamma \l}$,
\begin{align*}	
&\forall (\vh,\vx)\neq(\boldsymbol{0},\boldsymbol{0}) \text{ and } \vm, \vy \in \mathcal{S}^{p-1},\\
&\quad\qquad G_{-\epsilon}(\vh,\vx,\vm,\vy)\preceq \alpha_1\alpha_2\l \mQ_{\vh,\vx}\mtrans \mQ_{\vm,\vy}\vy+4\l\epsilon I_{n}.
\end{align*}
Here, $c$ and $\gamma^{-1}$ are constants that depend only polynomially on $\epsilon^{-1}$ and $\alpha_1$ and $\alpha_2$ is as in \eqref{eq:gamma_trunc}.
\end{lemma}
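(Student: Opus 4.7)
The plan is to follow the four-step template of Lemma~\ref{lem:WDC_trunc_pos}: bound the expectation at a fixed point, prove pointwise concentration, establish a Lipschitz bound on $G_{-\epsilon}$, and close via a covering argument on the product sphere $\mathcal{S}^{n-1}\times\mathcal{S}^{n-1}\times\mathcal{S}^{p-1}\times\mathcal{S}^{p-1}$. At the end, extending from unit $(\vh,\vx)$ to arbitrary nonzero $(\vh,\vx)$ is handled by the same homogeneity argument implicit in Lemma~\ref{lem:WDC_trunc_pos}; the $\vm,\vy$ arguments are already constrained to be unit-norm in the statement.

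For the expectation step, fix $\vh,\vx\in\mathcal{S}^{n-1}$ and $\vm,\vy\in\mathcal{S}^{p-1}$ and use $t_{-\epsilon}(z)\le\vone_{z>0}+\vone_{-\epsilon\le z\le 0}$ in both $t_{-\epsilon}$ factors. Expanding the product and exploiting the independence of the rows of $\mB$ from those of $\mC$ (which makes the $\bi$-factors independent of the $\ci$-factors for each $i$), the leading term factorizes as $\sum_i\mathbb{E}[\vone_{\bit\vh>0}\vone_{\bit\vx>0}\bi\bit]\cdot\mathbb{E}[(\cit\vm)_+(\cit\vy)_+]$. The first factor equals $\alpha_1\l\mQ_{\vh,\vx}$ by the same truncated-Gaussian calculation that produced $\alpha_1$ inside Lemma~\ref{lem:WDC_trunc_pos}; the second equals $\alpha_2\mtrans\mQ_{\vm,\vy}\vy$ by an analogous scalar ReLU-product calculation on truncated Gaussians. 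The $\vone_{-\epsilon\le\cdot\le 0}$ cross-terms contribute at most $O(\l\epsilon)\mI_n$ in operator norm, yielding the desired in-expectation bound.

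Pointwise concentration is handled by matrix Bernstein. Truncation forces $\|\bi\bit\|\le 9n$ and $|(\cit\vm)_+|,|(\cit\vy)_+|\le 3\sqrt{p}$, so each summand $\vxi_i=t_{-\epsilon}(\bit\vh)t_{-\epsilon}(\bit\vx)(\cit\vm)_+(\cit\vy)_+\bi\bit$ has spectral norm at most $R\lesssim np$, with matrix variance proxy of order $\l p$ by independence of $\mB$ and $\mC$. Matrix Bernstein then gives $\|G_{-\epsilon}-\mathbb{E}G_{-\epsilon}\|\le\l\epsilon$ at a fixed point with probability at least $1-2n\exp(-c\l\epsilon^2/(p+np\epsilon))$. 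For the Lipschitz bound, telescoping expresses the difference of $G_{-\epsilon}$ at two parameter tuples as four single-variable changes; applying $1/\epsilon$-Lipschitzness of $t_{-\epsilon}$, $1$-Lipschitzness of the ReLU, the truncation bound on row norms, and a high-probability spectral bound $\|\mB\|,\|\mC\|\lesssim\sqrt{\l}$ from Corollary~5.35 of \cite{vershynin10in}, I obtain a Lipschitz constant of order $\l\sqrt{np}/\epsilon$. A $\delta$-net on the product sphere at scale $\delta\sim\epsilon^2/\sqrt{np}$, of cardinality $(C/\delta)^{2(n+p)}$, and a union bound closes the argument; balancing the log-cardinality of this net against the Bernstein exponent produces the sample complexity $\l=\Omega((n\log n)^2+(p\log p)^2)$.

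The main obstacle is the concentration step. In Lemma~\ref{lem:WDC_trunc_pos} each summand is a product of two random factors and behaves sub-Gaussian, giving the familiar linear $\l\gtrsim n\log n$ scaling. Here the product has four random factors, so each summand is only sub-exponential, and even after the boundedness bought by truncation the Bernstein $R\cdot t$ term forces the quadratic sample complexity in the conclusion. Keeping the variance proxy as sharp as possible (separately in $n$ and $p$, via the $\mB$/$\mC$ independence) is what makes the final exponent scale as $(n\log n)^2+(p\log p)^2$ rather than $(np)^2$, so care is needed there.
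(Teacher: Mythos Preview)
Your outline for the expectation, Lipschitz, and covering steps matches the paper's proof closely and is correct. The gap is in the concentration step, and it is not cosmetic: your matrix Bernstein argument does not deliver the quadratic sample complexity $\l=\Omega((n\log n)^2+(p\log p)^2)$ claimed in the lemma.

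With the almost-sure norm bound $R=O(np)$ on each summand (which is sharp, given the truncations), the linear $Rt$ term in the Bernstein denominator dominates whenever $\max(n,p)\,\epsilon\gtrsim 1$, i.e., for any fixed $\epsilon$ once the dimensions are large. In that regime your own exponent $c\l\epsilon^2/(p+np\epsilon)$ becomes $c\l\epsilon/(np)$, and balancing against a net of log-cardinality $\sim(n+p)\log(np/\epsilon)$ forces $\l\gtrsim np(n+p)\log(np)/\epsilon$, which for $n\sim p$ is \emph{cubic} in the dimension, not quadratic. Sharpening the variance proxy to $O(\l n)$ (the correct order, by independence of $\bi$ and $\ci$ and the fact that $\mathbb{E}[(\cit\vm)_+^2(\cit\vy)_+^2]=O(1)$) does not help: the bottleneck is the $Rt$ term, not the variance.

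The paper avoids this via a different concentration tool. It writes each summand as $\vxi_i\vxi_i^\intercal$ with
\[
\vxi_i=\sqrt{t_{-\epsilon}(\bit\vh)\,t_{-\epsilon}(\bit\vx)\,(\cit\vm)_+(\cit\vy)_+}\ \bi,
\]
and observes that $\vxi_i$ is a \emph{sub-Gaussian vector} with $\|\vxi_i\|_{\psi_2}=K=O(\sqrt{n})$: bound $|\langle\bi,u\rangle|\le 3\sqrt{n}$ deterministically via the truncation of $\bi$, and note that the remaining random prefactor $\sqrt{(\cit\vm)_+(\cit\vy)_+}$ is sub-Gaussian with $O(1)$ norm. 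It then applies Corollary~\ref{cor:singular_noniso}, the non-isotropic sub-Gaussian-rows concentration whose proof rests on the refined scalar Bernstein inequality of Theorem~\ref{thm:yaniv_berns}. The point of that refinement is that the concentration exponent scales like $1/(K^2\log K)$ rather than the $1/K^4$ one effectively gets from standard Bernstein (or from your matrix Bernstein with $R=O(np)$). With $K=O(\sqrt{n})$ this yields an exponent $\sim\epsilon^2\l/(n\log n)$, and the union bound then only requires $\l\gtrsim n(n+p)\log n\log(np)/\epsilon^2$, giving the quadratic bound in the statement. So the improved Bernstein inequality is not incidental here; it is exactly what buys the extra factor of dimension that your approach loses.
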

\begin{proof} First we bound $\mathbb{E}[G_{-\epsilon}(\vh,\vx,\vm,\vy)]$ for fixed $\vh,\vx\in \mathcal{S}^{n-1}$ and $\vm,\vy \in\mathcal{S}^{p-1}$. Noting that $t_{-\epsilon}(z)\leq \vone_{z\geq-\epsilon}(z) = \vone_{z>0}(z)+\vone_{-\epsilon\leq z\leq 0}(z)$, we have
	\begin{align*}
		&\mathbb{E}\left[G_{-\epsilon}(\vh,\vx,\vm,\vy)\right]\\
		\preceq & \mathbb{E}\left[\sum_{i=1}^\l\ind_{\bit\vh\geq -\epsilon} \ind_{\bit \vx\geq -\epsilon} (\cit \vm)_+ (\cit \vy)_+ \bi\bit\right]\\
		=&\l\mathbb{E}\left[\ind_{\bit\vh\geq -\epsilon} \ind_{\bit \vx\geq -\epsilon} (\cit \vm)_+ (\cit \vy)_+ \bi\bit\right]\\	
		\preceq &\l\mathbb{E}\Big[\Big(\ind_{\bit\vh\geq0}\ind_{\bit \vx\geq0}(\cit \vm)_+ (\cit \vy)_+ +\left(\ind_{-\epsilon\leq\bit\vh\leq0}+\ind_{-\epsilon\leq\bit\vx\leq0}\right)(\cit \vm)_+ (\cit \vy)_+\Big)\bi\bit\Big]\\
		=&\l\mathbb{E}\Big[\left(\ind_{\bit\vh\geq0}\ind_{\bit \vx\geq0}\bi\bit\right)(\cit \vm)_+ (\cit \vy)_+\Big]+2\l\mathbb{E}\left[\left(\ind_{-\epsilon\leq\bit\vh\leq0}\bi\bit\right)(\cit \vm)_+ (\cit \vy)_+\right].
	\end{align*}
We first note that $\mathbb{E}\left[\ind_{\bit\vh\geq0}\ind_{\bit \vx\geq0}\bi\bit\right] = \alpha_1 \mQ_{\vh,\vx}$ and $\mathbb{E}\left[(\cit\vm)_+(\cit\vy)_+\right] = \alpha_2 \mtrans \mQ_{\vm,\vy}\vy$ where $\alpha_i$ satisfies $0.97<\alpha_i<1$. Also, we have $\left|\mtrans \mQ_{\vm,\vy}\vy\right| \leq \frac{1}{2}$ and $\mathbb{E}\left[\ind_{-\epsilon\leq\bit\vh\leq0}\bi\bit\right]\preceq\frac{\epsilon\alpha_1}{2}\mI_{n}$. Thus,
	\begin{align}
	\mathbb{E}\left[G_{-\epsilon}(\vh,\vx,\vm,\vy)\right] &\preceq \alpha_1\alpha_2\l\cdot\mtrans \mQ_{\vm,\vy}\vy\cdot \mQ_{\vh,\vx}+2\l\cdot\mathbb{E}\left[\ind_{-\epsilon\leq\bit\vh\leq0}\bi\bit\right]\cdot\alpha_2\mtrans \mQ_{\vm,\vy}\vy\notag\\
		&\preceq\alpha_1\alpha_2 \l\cdot\mtrans \mQ_{\vm,\vy}\vy\cdot \mQ_{\vh,\vx}+\frac{\epsilon\alpha_1\alpha_2 \l}{2}\mI_{n}\notag\\
		&\preceq\alpha_1\alpha_2 \l\cdot\mtrans \mQ_{\vm,\vy}\vy\cdot \mQ_{\vh,\vx}+\frac{\epsilon \l}{2}\mI_{n}\label{eq:joint_expectation_pos}
	\end{align}
	
Second, we show concentration of $G_{-\epsilon}(\vh,\vx,\vm,\vy)$ for fixed $\vh, \vx \in \mathcal{S}^{n -1}$ and $\vm, \vy \in \mathcal{S}^{p-1}$. Let $\vxi_i =\sqrt{t_{-\epsilon}(\bit \vh) t_{-\epsilon}(\bit \vx)(\cit \vm)_+ (\cit \vy)_+}\bi$. We have
\begin{align*}
	& G_{-\epsilon}(\vh,\vx,\vm,\vy) - \mathbb{E}\left[G_{-\epsilon}(\vh,\vx,\vm,\vy)\right]\\
	=& \sum_{i=1}^\l\Big(t_{-\epsilon}(\bit\vh) t_{-\epsilon}(\bit\vx)(\cit\vm)_+ (\cit\vy)_+ \bi\bit - \mathbb{E}\left[t_{-\epsilon}(\bit\vh) t_{-\epsilon}(\bit\vx)(\cit\vm)_+ (\cit\vy)_+ \bi\bit\right]\Big)\\
	& = \sum_{i=1}^\l\left(\vxi_i\vxi_i^\intercal -\mathbb{E}\left[\vxi_i\vxi_i^\intercal\right] \right).
\end{align*}
Note that $\vxi_i$ is  sub-Gaussian for all $i$ and that the sub-Gaussian norm of $\vxi_i$ is bounded from above by $K = \tilde{K}\sqrt{n}$, where $\tilde{K}$ is an absolute constant. By Corollary \ref{cor:singular_noniso}, there exists a $c = \bar{c}\sqrt{n}\log n$ and $\gamma = \frac{\bar{\gamma}}{n\log n}$ such that for all $t\geq 0$, with probability at least $1- 2e^{-\gamma t^2}$, 
\begin{equation*}
\|G_{-\epsilon}(\vh,\vx,\vm,\vy) - \mathbb{E}\left[G_{-\epsilon}(\vh,\vx,\vm,\vy)\right]\|\leq \max(\delta,\delta^2)\l, \quad \text{where } \delta = c \sqrt{\frac{n}{\l}}+\frac{t}{\sqrt{\l}}.
\end{equation*}
Here, $\bar{c}$ and $\bar{\gamma}$ are absolute constants. If $\l>(2\bar{c}/\epsilon)^2 n^2(\log n)^2, t = \epsilon\sqrt{\l}/2$, and $\epsilon<1$, we have 
\begin{equation}\label{eq:joint_concentration_pos}
\|G_{-\epsilon}(\vh,\vx,\vm,\vy) - \mathbb{E}\left[G_{-\epsilon}(\vh,\vx,\vm,\vy)\right]\|\leq \epsilon \l
\end{equation}
with probability at least $1- 2e^{-\bar{\gamma}\frac{\epsilon^2}{4}\frac{\l}{n\log n}}$.

{Third, we bound the Lipschitz constant of $G_{-\epsilon}$. For $\htilde, \xtilde \in \mathbb{R}^{n}$ and $\mtilde, \ytilde \in \mathcal{S}^{p-1}$ we have}
\begin{align*}
	& G_{-\epsilon}(\vh,\vx,\vm,\vy)-G_{-\epsilon}(\htilde,\xtilde,\mtilde,\ytilde)\\
	=& \sum_{i=1}^{\l}\Big[t_{-\epsilon}(\bit\vh) t_{-\epsilon}(\bit\vx)(\cit\vm)_+ (\cit\vy)_+-t_{-\epsilon}(\bit\htilde) t_{-\epsilon}(\bit\xtilde)(\cit\mtilde)_+ (\cit\ytilde)_+\Big]\bi\bit\\
	=&\sum_{i=1}^\l\Big[t_{-\epsilon}(\bit\vh) t_{-\epsilon}(\bit\vx)(\cit\vm)_+\left((\cit\vy)_+-(\cit\ytilde)_+\right)\\
	&\quad+t_{-\epsilon}(\bit\vh) t_{-\epsilon}(\bit\vx)(\cit\ytilde)_+\left((\cit\vm)_+-(\cit\mtilde)_+\right)\\
	&\quad+t_{-\epsilon}(\bit\vh)(\cit\mtilde)_+(\cit\ytilde)_+\left( t_{-\epsilon}(\bit\vx)-t_{-\epsilon}(\bit\xtilde)\right)\\
	&\quad+t_{-\epsilon}(\bit\xtilde)(\cit\mtilde)_+(\cit\ytilde)_+\left( t_{-\epsilon}(\bit\vh)-t_{-\epsilon}(\bit\htilde)\right)\Big]\bi\bit\\
	=&\mB^\intercal\Big[\diag\left(t_{-\epsilon}(\mB\vh)\odot t_{-\epsilon}(\mB\vx) \odot(\mC\vm)_+\right)\diag( (\mC\vy)_+-(\mC\ytilde)_+)\\
	&\quad +\diag\left(t_{-\epsilon}(\mB\vh)\odot t_{-\epsilon}(\mB\vx)\odot (\mC\ytilde)_+\right)\diag\left( (\mC\vm)_+-(\mC\mtilde)_+\right)\\
	&\quad +\diag\big(t_{-\epsilon}(\mB\vh)\odot (\mC\mtilde)_+\odot (\mC\ytilde)_+\big)\diag\left( t_{-\epsilon}(\mB\vx)-t_{-\epsilon}(\mB\xtilde)\right)\\
	&\quad +\diag\left(t_{-\epsilon}(\mB\xtilde)\odot (\mC\mtilde)_+\odot ( \mC\ytilde)_+\right)\diag\left( t_{-\epsilon}(\mB\vh)-t_{-\epsilon}(\mB\htilde)\right)\Big]\mB
\end{align*}
Thus,
\begin{align*}
	&\|G_{-\epsilon}(\vh,\vx,\vm,\vy)-G_{-\epsilon}(\htilde,\xtilde,\mtilde,\ytilde)\|\\
	\leq &\|\mB\|^2\Big[\|\mC\vm\|_\infty\|(\mC\vy)_+-(\mC\ytilde)_+\|_\infty+\|\mC\ytilde\|_\infty\| (\mC\vm)_+-(\mC\mtilde)_+\|_\infty\\
	&\quad+\|\mC\mtilde\|_\infty\|\mC\ytilde\|_\infty\|t_{-\epsilon}(\mB\vx)-t_{-\epsilon}(\mB\xtilde)\|_\infty+\|\mC\mtilde\|_\infty\|\mC\ytilde\|_\infty\|t_{-\epsilon}(\mB\vh)-t_{-\epsilon}(\mB\htilde)\|_\infty\Big]\\
	\leq &\|\mB\|^2\Bigg[\max_{i\in[\l]}\|\ci\|_2\max_{i\in[\l]}\left|(\cit\vy)_+-(\cit\ytilde)_+\right|+\max_{i\in[\l]}\|\ci\|_2\max_{i\in[\l]}\left|(\cit\vm)_+-(\cit\mtilde)_+\right|\\
	&\ +\left(\max_{i\in[\l]}\|\ci\|_2\right)^2\max_{i\in[\l]}\left|t_{-\epsilon}(\bit\vx)-t_{-\epsilon}(\bit\xtilde)\right|+\left(\max_{i\in[\l]}\|\ci\|_2\right)^2\max_{i\in[\l]}\left|t_{-\epsilon}(\bit\vh)-t_{-\epsilon}(\bit\htilde)\right|\Bigg]\\
	\leq &\|\mB\|^2\Bigg[\max_{i\in[\l]}\|\ci\|_2\max_{i\in[\l]}\left|\cit(\vy-\ytilde)\right|+\max_{i\in[\l]}\|\ci\|_2\max_{i\in[\l]}\left|\cit(\vm-\mtilde)\right|\\
	&\quad+\left(\max_{i\in[\l]}\|\ci\|_2\right)^2\max_{i\in[\l]}\frac{1}{\epsilon}\left|\bit(\vx-\xtilde)\right|+\left(\max_{i\in[\l]}\|\ci\|_2\right)^2\max_{i\in[\l]}\frac{1}{\epsilon}\left|\bit(\vh-\htilde)\right|\Bigg]\\
	\leq &\|\mB\|^2\Bigg[\left(\max_{i\in[\l]}\|\ci\|_2\right)^2\left\|\vy-\ytilde\right\|+\left(\max_{i\in[\l]}\|\ci\|_2\right)^2\left\|\vm-\mtilde\right\|\\
	&\quad+\frac{1}{\epsilon}\left(\max_{i\in[\l]}\|\ci\|_2\right)^2\max_{i\in[\l]}\|\bi\|_2\left\|\vx-\xtilde\right\|+\frac{1}{\epsilon}\left(\max_{i\in[\l]}\|\ci\|_2\right)^2\max_{i\in[\l]}\|\bi\|_2\left\|\vh-\htilde\right\|\Bigg]\\
	\leq & \|\mB\|^2\Big[9p\left\|\vy-\ytilde\right\|+9p\left\|\vm-\mtilde\right\|+\frac{27}{\epsilon}\sqrt{n}p\left\|\vx-\xtilde\right\|+\frac{27 }{\epsilon}\sqrt{n}p\left\|\vh-\htilde\right\|\Big]
\end{align*}
where the first inequality follows because $|t_{-\epsilon}(z)|\leq 1$ for all $z$, and the third inequality follows because $t_{-\epsilon}(z)$ is $1/\epsilon$-Lipschitz and $(z)_+$ is $1$-Lipschitz. { Let $E_1$ be the event that $\|\mB\|\leq 3\sqrt{\l}$. By Corollary 5.35 in \cite{vershynin10in}, for $\mA \in \R^{\l\times n}$ with rows of $\mA$ following $\mathcal{N}(\vzero,\mI_{n})$, we have $\mathbb{P}(\|\mA\|\leq 3\sqrt{\l})\geq 1-2e^{-\l/2}$, if $\l\geq n$. As rows of $\mB$ are truncated, we have $\mathbb{P}(E_1)\geq 1-2e^{-\l/2}$, if $\l\geq n$ as well.} On $E_1$, we have
\begin{align}
	&\|G_{-\epsilon}(\vh,\vx,\vm,\vy)-G_{-\epsilon}(\htilde,\xtilde,\mtilde,\ytilde)\|\notag\\
	\leq&\frac{729\l\sqrt{n}p}{\epsilon}\left[\left\|\vy-\ytilde\right\|+\left\|\vm-\mtilde\right\|+\left\|\vx-\xtilde\right\|+\left\|\vh-\htilde\right\|\right]\label{eq:joint_Lipschitz_pos}
\end{align}
for all $\htilde, \xtilde \in \mathcal{S}^{n-1}$ and $\mtilde, \ytilde \in \mathcal{S}^{p-1}$.

Finally, we complete the proof by a covering argument. Let $\mathcal{N}_\delta$ be a $\delta$-net on $\mathcal{S}^{n-1}\times\mathcal{S}^{p-1}$ such that $|\mathcal{N}_\delta|\leq (3/\delta)^{n+p}$. Take $\delta = \frac{\epsilon^2}{2916\sqrt{n}p}$. Combining \eqref{eq:joint_expectation_pos} and \eqref{eq:joint_concentration_pos}, we have
\begin{align*}
	\forall (\vh,\vm), (\vx,\vy) \in \mathcal{N}_\delta,\quad G_{-\epsilon}(\vh,\vx,\vm,\vy)\preceq&\mathbb{E}G_{-\epsilon}(\vh,\vx,\vm,\vy) + \l\epsilon I_{n}\\
	\preceq&\alpha^2 \l \mQ_{\vh,\vx}\mtrans \mQ_{\vm,\vy}\vy+3\l\epsilon I_{n}{\change.}
\end{align*}
with probability at least
\begin{equation*}
1-2|\mathcal{N}_\delta|e^{-\gamma_K\epsilon^2\frac{\l}{4n\log n}}\geq 1-2\left(\frac{3}{\delta}\right)^{n+p}e^{-\gamma_K\epsilon^2\frac{\l}{4n\log n}}\geq 1-2e^{-\gamma_K\epsilon^2\frac{\l}{4n\log n}+(n+p)\log(3\cdot 2916\sqrt{n}p/\epsilon^2)}.
\end{equation*}
If $\l\geq \tilde{c}n(n+p)\log(np)$ for some $\tilde{c} =\Omega(\epsilon^2\log \epsilon)$, then this probability is at least $1 - 2e^{-\tilde{\gamma}\l}$ for some $\tilde{\gamma} = O(\epsilon^2)$. For $(\vh,\vm), (\vx,\vy) \in \mathcal{S}^{n-1}\times\mathcal{S}^{p-1}$, let $(\htilde,\mtilde), (\xtilde,\ytilde) \in \mathcal{N}_\delta$ be such that $\|\vh-\htilde\|_2\leq\delta$, $\|\vx-\xtilde\|_2\leq\delta$, $\|\vm-\mtilde\|_2\leq\delta$ and $\|\vy-\ytilde\|_2\leq\delta$. By \eqref{eq:joint_Lipschitz_pos}, we have that
\begin{align*}	
\forall (\vh,\vx)\neq(\boldsymbol{0},\boldsymbol{0}) \text{ and } \vm,\vy \in \mathcal{S}^{p-1},\quad  &G_{-\epsilon}(\vh,\vx,\vm,\vy)\\
\preceq & G_{-\epsilon}(\htilde,\xtilde,\mtilde,\ytilde) +\frac{729\l\sqrt{n}p}{\epsilon}4\delta \mI_{n}\\
\preceq & \alpha_1\alpha_2\l \mQ_{\vh,\vx}\mtrans \mQ_{\vm,\vy}\vy+4\l\epsilon \mI_{n}.
\end{align*}
In conclusion, the result of this lemma holds if $\l>(2\bar{c}/\epsilon)^2n^2(\log n)^2$ and $\l\geq \tilde{c}n(n+p)\log(np)$, with probability at least $1-2e^{-\l/2}-2e^{-\tilde{\gamma}\l}>1-4e^{-\gamma \l}$ for some $\gamma = O(\epsilon^2)$ and $\tilde{c} = \Omega(\epsilon^2\log\epsilon)$.
\end{proof}

Next, we now provide an upper bound on the singular values of $G_{\epsilon}(\vh,\vx,\vm,\vy)$.
\begin{lemma}\label{lem:joint_WDC_trunc_neg}
Fix $0<\epsilon<1$. Let ${\mB} \in \R^{\l\times n}$ where $i$th row of $\mB$ satisfy $\bit = \vb^{\intercal} \cdot \vone_{\|\vb\|_2\leq 3\sqrt{n}}$ and $\vb \sim \mathcal{N}(\vzero,\mI_{n})$. Similarly, let ${\mC} \in \R^{\l\times p}$ where $i$th row of $\mC$ satisfy $\cit = \vc^{\intercal} \cdot\boldsymbol{1}_{\|\vc\|_2\leq 3\sqrt{p}}$ and $\vc \sim \mathcal{N}(\vzero,\mI_{p})$. If $\l>c((n\log n)^2+(p\log p)^2)$, then with probability at least $1-4e^{-\gamma \l}$,
\begin{align*}	
&\forall (\vh,\vx)\neq(\boldsymbol{0},\boldsymbol{0}) \text{ and } \vm,\vy \in \mathcal{S}^{p-1},\\
&\quad\qquad G_{\epsilon}(\vh,\vx,\vm,\vy)\succeq \alpha_1\alpha_2\l \mQ_{\vh,\vx}\mtrans \mQ_{\vm,\vy}\vy-4\l\epsilon I_{n}.
\end{align*}
Here, $c$ and $\gamma^{-1}$ are constants that depend only polynomially on $\epsilon^{-1}$ and $\alpha_1$ and $\alpha_2$ as in \eqref{eq:gamma_trunc}.
\end{lemma}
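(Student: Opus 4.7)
The plan is to mirror the proof of Lemma \ref{lem:joint_WDC_trunc_pos} with the inequality direction reversed, using the key structural fact that $t_{\epsilon}$ is a \emph{lower} bound for $\vone_{z>0}$, rather than an upper bound as $t_{-\epsilon}$ is. The three steps are: (i) bound $\mathbb{E}[G_\epsilon(\vh,\vx,\vm,\vy)]$ from below, (ii) establish sub-Gaussian concentration, and (iii) pass to a uniform bound via a covering argument.

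For the expectation, I would use the pointwise inequality $t_{\epsilon}(z) \geq \vone_{z>0}(z) - \vone_{0 \leq z \leq \epsilon}(z)$. Since both indicators on the right-hand side are nonnegative, multiplying the two factors $t_{\epsilon}(\bit \vh) t_{\epsilon}(\bit \vx)$ and discarding the positive cross-term yields
\begin{equation*}
t_{\epsilon}(\bit \vh) t_{\epsilon}(\bit \vx) \;\geq\; \vone_{\bit \vh > 0}\vone_{\bit \vx > 0} - \vone_{0 \leq \bit \vh \leq \epsilon} - \vone_{0 \leq \bit \vx \leq \epsilon}.
\end{equation*}
Taking expectations, using independence of $\mB$ and $\mC$, and applying $\mathbb{E}[(\cit\vm)_+(\cit\vy)_+] = \alpha_2 \mtrans \mQ_{\vm,\vy}\vy \geq 0$ together with $|\mtrans \mQ_{\vm,\vy}\vy| \leq \tfrac{1}{2}$, the leading term gives $\alpha_1\alpha_2 \l \mtrans \mQ_{\vm,\vy}\vy \cdot \mQ_{\vh,\vx}$ while each correction contributes at most $\tfrac{\epsilon \l}{2}\mI_n$ in the Loewner order. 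The net lower bound becomes $\mathbb{E}[G_\epsilon(\vh,\vx,\vm,\vy)] \succeq \alpha_1\alpha_2 \l \mtrans \mQ_{\vm,\vy}\vy \cdot \mQ_{\vh,\vx} - \epsilon \l \mI_n$.

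For concentration and Lipschitz control, the arguments from Lemma \ref{lem:joint_WDC_trunc_pos} transfer verbatim. The sub-Gaussian norm of $\vxi_i = \sqrt{t_\epsilon(\bit\vh) t_\epsilon(\bit\vx)(\cit\vm)_+(\cit\vy)_+}\,\bi$ is still bounded by $\tilde{K}\sqrt{n}$ since $t_\epsilon \in [0,1]$, so Corollary \ref{cor:singular_noniso} gives $\|G_\epsilon - \mathbb{E}[G_\epsilon]\| \leq \epsilon \l$ with probability at least $1 - 2e^{-\bar{\gamma}\epsilon^2 \l/(4n\log n)}$ provided $\l > (2\bar{c}/\epsilon)^2 n^2 (\log n)^2$. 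The Lipschitz estimate is identical because $t_\epsilon$, like $t_{-\epsilon}$, is $1/\epsilon$-Lipschitz and bounded by $1$; on the event $\|\mB\| \leq 3\sqrt{\l}$ (which holds with probability $\geq 1-2e^{-\l/2}$), the same constant $729\l\sqrt{n}p/\epsilon$ applies.

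Finally, a $\delta$-net on $\mathcal{S}^{n-1} \times \mathcal{S}^{p-1}$ with $\delta = \epsilon^2/(2916\sqrt{n}p)$ extends the pointwise lower bound to a uniform one at the cost of an extra $\l\epsilon \mI_n$ slack, yielding the claimed inequality with total probability at least $1 - 4e^{-\gamma \l}$ whenever $\l \geq \tilde{c} n(n+p)\log(np)$ and $\l > (2\bar{c}/\epsilon)^2 n^2(\log n)^2$, which is absorbed into the hypothesis $\l > c((n\log n)^2 + (p\log p)^2)$. The only real bookkeeping hazard is making sure the Loewner-order inequalities flip consistently when the two correction indicators are subtracted; because $\mathbb{E}[\vone_{0\leq \bit\vh\leq\epsilon}\bi\bit] \preceq \tfrac{\epsilon\alpha_1}{2}\mI_n$ and the $\vm,\vy$-factor is a nonnegative scalar, each correction is genuinely a PSD multiple of $\mI_n$, so the subtraction produces the desired lower bound without sign issues. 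Aside from this, the proof is essentially the mirror image of Lemma \ref{lem:joint_WDC_trunc_pos}.
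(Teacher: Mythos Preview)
Your proposal is correct and follows essentially the same approach as the paper's proof: both use the pointwise lower bound $t_{\epsilon}(z) \geq \vone_{z>0}(z) - \vone_{0\leq z\leq \epsilon}(z)$ to obtain $\mathbb{E}[G_\epsilon] \succeq \alpha_1\alpha_2 \l \mtrans \mQ_{\vm,\vy}\vy \cdot \mQ_{\vh,\vx} - \epsilon \l \mI_n$, then invoke the concentration and Lipschitz estimates from Lemma \ref{lem:joint_WDC_trunc_pos} verbatim (since $t_\epsilon$ is also $[0,1]$-valued and $1/\epsilon$-Lipschitz), and finish with the same covering argument. Your remark about the correction terms being nonnegative scalar multiples of PSD matrices is exactly the observation the paper relies on, so there is no gap.
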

\begin{proof}
	First we bound $\mathbb{E}\left[G_{\epsilon}(\vh,\vx,\vm,\vy)\right]$ for fixed $\vh, \vx \in \mathcal{S}^{n -1}$ and $\vm, \vy \in \mathcal{S}^{p-1}$. Noting that $t_{\epsilon}(z) \geq = \vone_{z>0}(z)-\vone_{0\leq z\leq \epsilon}(z)$, we have
	\begin{align*}
		&\mathbb{E}\left[G_{\epsilon}(\vh,\vx,\vm,\vy)\right]\\
		\succeq &\l\mathbb{E}\Big[\Big(\ind_{\bit\vh\geq0}\ind_{\bit \vx\geq0}(\cit \vm)_+ (\cit \vy)_+ -\left(\ind_{0\leq\bit\vh\leq\epsilon}+\ind_{0\leq\bit\vx\leq\epsilon}\right)(\cit \vm)_+ (\cit \vy)_+\Big)\bi\bit\Big]\\
		=&\l\mathbb{E}\Big[\left(\ind_{\bit\vh\geq0}\ind_{\bit \vx\geq0}\bi\bit\right)(\cit \vm)_+ (\cit \vy)_+\Big]-2\l\mathbb{E}\left[\left(\ind_{0\leq\bit\vh\leq\epsilon}\bi\bit\right)(\cit \vm)_+ (\cit \vy)_+\right].
	\end{align*}
We first note that $\mathbb{E}\left[\ind_{\bit\vh\geq0}\ind_{\bit \vx\geq0}\bi\bit\right] = \alpha_1 \mQ_{\vh,\vx}$ and $\mathbb{E}\left[(\cit\vm)_+(\cit\vy)_+\right] = \alpha_2 \mtrans \mQ_{\vm,\vy}\vy$ where $\alpha_i$ satisfies $0.97<\alpha_i<1$. Also, we have $\left|\mtrans \mQ_{\vm,\vy}\vy\right| \leq \frac{1}{2}$ and $\mathbb{E}\left[\ind_{0\leq\bit\vh\leq\epsilon}\bi\bit\right]\preceq\frac{\epsilon\alpha_1}{2}\mI_{n}$. Thus,
	\begin{align}
	\mathbb{E}\left[G_{\epsilon}(\vh,\vx,\vm,\vy)\right] &\succeq \alpha_1\alpha_2\l\cdot\mtrans \mQ_{\vm,\vy}\vy\cdot \mQ_{\vh,\vx}-2\l\cdot\mathbb{E}\left[\ind_{0\leq\bit\vh\leq\epsilon}\bi\bit\right]\cdot\alpha_2\mtrans \mQ_{\vm,\vy}\vy\notag\\
		&\succeq\alpha_1\alpha_2 \l\cdot\mtrans \mQ_{\vm,\vy}\vy\cdot \mQ_{\vh,\vx}-\frac{\epsilon\alpha_1\alpha_2 \l}{2}\mI_{n}\notag\\
		&\succeq\alpha_1\alpha_2 \l\cdot\mtrans \mQ_{\vm,\vy}\vy\cdot \mQ_{\vh,\vx}-\epsilon \l\mI_{n}\label{eq:joint_expectation_neg}
	\end{align}
	
Second, we show concentration of $G_{\epsilon}(\vh,\vx,\vm,\vy)$ for fixed $\vh, \vx \in \mathcal{S}^{n -1}$ and $\vm, \vy \in \mathcal{S}^{p-1}$ and is similar to the steps shown in proof of Lemma \ref{lem:joint_WDC_trunc_pos}. Let $\vxi_i =\sqrt{t_{\epsilon}(\bit \vh) t_{\epsilon}(\bit \vx)(\cit \vm)_+ (\cit \vy)_+}\bi$. If $\l>(2\bar{c}/\epsilon)^2 n^2(\log n)^2$, we have 
\begin{equation}\label{eq:joint_concentration_neg}
\|G_{\epsilon}(\vh,\vx,\vm,\vy) - \mathbb{E}\left[G_{\epsilon}(\vh,\vx,\vm,\vy)\right]\|\leq \epsilon n
\end{equation}
with probability at least $1- 2e^{-\bar{\gamma}\frac{\epsilon^2}{4}\frac{\l}{n\log n}}$. Here, $\bar{c}$ and $\bar{\gamma}$ are absolute constants. 

Third, we bound the Lipschitz constant of $G_{\epsilon}$, and is again similar to the steps shown in proof of Lemma \ref{lem:joint_WDC_trunc_pos}. If $\l\geq n$ then we have
\begin{align}
	&\|G_{\epsilon}(\vh,\vx,\vm,\vy)-G_{\epsilon}(\htilde,\xtilde,\mtilde,\ytilde)\|\notag\\
	\leq&\frac{729\l\sqrt{n}p}{\epsilon}\left[\left\|\vy-\ytilde\right\|+\left\|\vm-\mtilde\right\|+\left\|\vx-\xtilde\right\|+\left\|\vh-\htilde\right\|\right]\label{eq:joint_Lipschitz_neg}
\end{align}
for all $\htilde, \xtilde \in \mathcal{S}^{n-1}$ and $\mtilde, \ytilde \in \mathcal{S}^{p-1}$ with probability at least $1-2e^{-\l/2}$.

Finally, we complete the proof by a covering argument. Let $\mathcal{N}_\delta$ be a $\delta$-net on $\mathcal{S}^{n-1}\times\mathcal{S}^{p-1}$ such that $|\mathcal{N}_\delta|\leq (3/\delta)^{n+p}$. Take $\delta = \frac{\epsilon^2}{2916\sqrt{n}p}$. Combining \eqref{eq:joint_expectation_neg} and \eqref{eq:joint_concentration_neg}, we have
\begin{align*}
	\forall (\vh,\vm), (\vx,\vy) \in \mathcal{N}_\delta,\quad G_{\epsilon}(\vh,\vx,\vm,\vy)\succeq&\mathbb{E}G_{\epsilon}(\vh,\vx,\vm,\vy) - \l\epsilon I_{n}\\
	\succeq&\alpha^2 \l \mQ_{\vh,\vx}\mtrans \mQ_{\vm,\vy}\vy-3\l\epsilon I_{n}
\end{align*}
with probability at least
\begin{equation*}
1-2|\mathcal{N}_\delta|e^{-\gamma_K\epsilon^2\frac{\l}{4n\log n}}\geq 1-2\left(\frac{3}{\delta}\right)^{n+p}e^{-\gamma_K\epsilon^2\frac{\l}{4n\log n}}\geq 1-2e^{-\gamma_K\epsilon^2\frac{\l}{4n\log n}+(n+p)\log(108\sqrt{n}p/\epsilon^2)}.
\end{equation*}
If $\l\geq \tilde{c}n(n+p)\log(np)$ for some $\tilde{c} =\Omega(\epsilon^2\log \epsilon)$, then this probability is at least $1 - 2e^{-\tilde{\gamma}\l}$ for some $\tilde{\gamma} = O(\epsilon^2)$. For $(\vh,\vm), (\vx,\vy) \in \mathcal{S}^{n-1}\times\mathcal{S}^{p-1}$, let $(\htilde,\mtilde), (\xtilde,\ytilde) \in \mathcal{N}_\delta$ be such that $\|\vh-\htilde\|_2\leq\delta$, $\|\vx-\xtilde\|_2\leq\delta$, $\|\vm-\mtilde\|_2\leq\delta$ and $\|\vy-\ytilde\|_2\leq\delta$. By \eqref{eq:joint_Lipschitz_neg}, we have that
\begin{align*}	
\forall (\vh,\vx)\neq(\boldsymbol{0},\boldsymbol{0}) \text{ and } \vm,\vy \in \mathcal{S}^{p-1},\quad  &G_{\epsilon}(\vh,\vx,\vm,\vy)\\
\succeq & \alpha_1\alpha_2\l \mQ_{\vh,\vx}\mtrans \mQ_{\vm,\vy}\vy-4\l\epsilon \mI_{n}.
\end{align*}
In conclusion, the result of this lemma holds if $\l>(2\bar{c}/\epsilon)^2n^2(\log n)^2$ and $\l\geq \tilde{c}n(n+p)\log(np)$, with probability at least $1-2e^{-\l/2}-2e^{-\tilde{\gamma}\l}>1-4e^{-\gamma \l}$ for some $\gamma = O(\epsilon^2)$ and $\tilde{c} = \Omega(\epsilon^2\log\epsilon)$.
\end{proof}

\subsection{Concentration of matrices with sub-gaussian rows}
The proof of Lemmas \ref{lem:WDC_trunc_pos} and \ref{lem:WDC_trunc_neg} require results from concentration of sub-exponential random variables that has a better dependence on the sub-exponential parameters. To this end, we use the following Bernstein inequality and refer the readers to \cite{Jeong2019Bernstein} for a proof of the theorem.

\begin{theorem}\label{thm:yaniv_berns}
	Let $\va = (a_1,\dots,a_n)$ be a fixed non-zero vector and let $y_1,\dots,y_m$ be independent, mean zero sub-exponential random variables satisfying $\mathbb{E}|y_i|\leq 2$ and $\|y_i\|_{\psi_1}\leq K_i^2\ (K_i\geq 2)$. Then for every $u\geq 0$, we have
	\begin{align*}
		\mathbb{P}\left(\left|\sum_{i=1}^m a_iy_i\right|\geq u\right)\leq 2\exp\left[-c\min\left(\tfrac{u^2}{\sum_{i=1}^m a_i^2K_i^2\log K_i},\tfrac{u}{\|\va\|_\infty K^2\log K}\right)\right],
	\end{align*}
	where $K = \max_{i}K_i$ and $c$ is an absolute constant.
\end{theorem}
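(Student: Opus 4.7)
The plan is to prove the theorem by a truncation-plus-Chernoff argument, leveraging the assumption $\mathbb{E}|y_i|\leq 2$ to replace the usual sub-exponential variance proxy of order $K_i^4$ by the sharper $K_i^2\log K_i$. The starting observation is that the sub-exponential hypothesis gives $\mathbb{P}(|y_i|>t)\leq 2e^{-t/K_i^2}$, and so by layer-cake
\[
\mathbb{E}y_i^2 \;=\; \int_0^\infty 2t\,\mathbb{P}(|y_i|>t)\,dt \;\leq\; L\,\mathbb{E}|y_i| \;+\; \int_L^\infty 4t\,e^{-t/K_i^2}\,dt.
\]
Choosing $L\asymp K_i^2\log K_i$ and invoking $\mathbb{E}|y_i|\leq 2$ yields $\mathbb{E}y_i^2\leq C K_i^2\log K_i$, which is the variance proxy appearing in the theorem.

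The second step is to truncate: set $T_i = C_1 K_i^2\log K_i$ with $C_1$ large, and write $y_i = z_i + w_i$ where $z_i = y_i\mathbf{1}_{|y_i|\leq T_i}$ and $w_i = y_i\mathbf{1}_{|y_i|>T_i}$. Since $|z_i|\leq T_i$ and $\mathbb{E}z_i^2\leq \mathbb{E}y_i^2\leq C K_i^2\log K_i$, expanding the moment generating function of $z_i-\mathbb{E}z_i$ and using the elementary bound $e^x-1-x\leq x^2 e^{|x|}/2$ gives, for $|\lambda|\leq 1/T_i$,
\[
\mathbb{E}\,e^{\lambda(z_i-\mathbb{E}z_i)} \;\leq\; \exp\!\bigl(C\lambda^2 K_i^2\log K_i\bigr).
\]
Multiplying over $i$, the standard Chernoff optimization in $\lambda\in[0,1/(\|\va\|_\infty \max_iT_i)]$ produces exactly the stated two-regime bound
\[
\mathbb{P}\!\left(\sum_i a_i(z_i-\mathbb{E}z_i) \geq u\right) \;\leq\; \exp\!\left(-c\,\min\!\left(\tfrac{u^2}{\sum a_i^2 K_i^2\log K_i},\,\tfrac{u}{\|\va\|_\infty K^2\log K}\right)\right),
\]
after absorbing $\max_i T_i$ into $K^2\log K$ up to a constant.

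It then remains to handle the centering shift $\sum_i a_i\,\mathbb{E}z_i = -\sum_i a_i\,\mathbb{E}w_i$ and the tail contribution $\sum_i a_i w_i$. Using $\mathbb{E}|y_i|\mathbf{1}_{|y_i|>T_i}\leq 2(T_i+K_i^2)e^{-T_i/K_i^2}\leq C_2 K_i^{-(C_1-O(1))}\cdot K_i^2\log K_i$, one can make $C_1$ large enough that the shift is absorbed into $u$ (for $u$ in the nontrivial regime). The tail event $\mathcal{E}^c = \{\exists i: |y_i|>T_i\}$ is controlled by the union bound; the key idea is to take the truncation level to scale as $T_i = \max\!\bigl(C_1 K_i^2\log K_i,\, u/(\|\va\|_\infty\log K)\bigr)$ so that $\mathbb{P}(\mathcal{E}^c)\leq \sum_i 2e^{-T_i/K_i^2}$ is automatically dominated by the same Bernstein expression on the right-hand side. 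A final symmetrization (applying the same argument to $-\sum a_iy_i$) produces the factor of $2$.

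The main obstacle is Step~3/Step~5: verifying that the shift $\sum a_i\mathbb{E}z_i$ and the tail probability $\mathbb{P}(\mathcal{E}^c)$ can both be bounded by a quantity of the same form $\exp(-c\,u/(\|\va\|_\infty K^2\log K))$, without incurring additional factors such as $\log m$ or a power of $m$, for all $u\geq 0$. This is where the assumption $K_i\geq 2$ and the particular choice of truncation threshold become essential, and where the clean logarithmic improvement over standard sub-exponential Bernstein really hinges; I would expect this to be the most delicate part of the calculation and the place where the paper by \cite{Jeong2019Bernstein} provides the decisive technical refinement.
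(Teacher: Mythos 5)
There is a genuine gap, and it sits exactly where you flagged it. Your Step 1 (the bound $\mathbb{E}y_i^2\leq CK_i^2\log K_i$ from the layer-cake split using $\mathbb{E}|y_i|\leq 2$ on the bulk and the $\psi_1$ tail beyond $L\asymp K_i^2\log K_i$) is correct and is indeed the heart of the matter. But the globalization by truncating the variables and union-bounding the exceedance event cannot produce the stated bound, which is uniform in $m$ and valid for all $u\geq 0$. Concretely: with a fixed threshold $T_i=C_1K_i^2\log K_i$, each $|\mathbb{E}w_i|$ is only a small \emph{absolute} constant (it does not decay in $m$), so the centering shift $\bigl|\sum_i a_i\mathbb{E}w_i\bigr|$ can be of order $\sqrt{m}$ (take $a_i=1/\sqrt{m}$, $K_i=2$), while the nontrivial range of $u$ stays of order $1$; the shift cannot be ``absorbed into $u$.'' Your proposed remedy, $T_i=\max\bigl(C_1K_i^2\log K_i,\,u/(\|\va\|_\infty\log K)\bigr)$, does not repair this: the union bound still yields $\mathbb{P}(\mathcal{E}^c)\leq 2m\exp\bigl(-u/(\|\va\|_\infty K^2\log K)\bigr)$, and the factor $m$ is only harmless when $u\gtrsim\|\va\|_\infty K^2\log K\cdot\log m$ (e.g.\ $a_1=1$, $a_i=1/\sqrt m$ otherwise, $K_i=2$, $u=10$ already defeats it for moderate $m$); moreover the enlarged threshold caps the admissible Chernoff parameter at $\lambda\lesssim 1/(\|\va\|_\infty\max_iT_i)\asymp\log K/u$, so the linear-regime exponent degrades to $O(\log K)$ instead of $u/(\|\va\|_\infty K^2\log K)$. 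So the proposal, as written, is not a proof.

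The repair is to drop the truncation of the $y_i$ themselves and prove a per-variable MGF bound directly, splitting only inside an expectation: from $e^x\leq 1+x+\tfrac{x^2}{2}e^{|x|}$ and $\mathbb{E}y_i=0$ one gets $\mathbb{E}e^{\lambda y_i}\leq 1+\tfrac{\lambda^2}{2}\mathbb{E}\bigl[y_i^2e^{|\lambda y_i|}\bigr]$; splitting this expectation at $L_i\asymp K_i^2\log K_i$ (bulk controlled by $L_ie^{|\lambda|L_i}\mathbb{E}|y_i|\leq 2L_ie^{|\lambda|L_i}$, tail by the $\psi_1$ decay) gives $\mathbb{E}e^{\lambda y_i}\leq\exp\bigl(C\lambda^2K_i^2\log K_i\bigr)$ for $|\lambda|\leq c/(K_i^2\log K_i)$, and the standard Chernoff optimization over $0\leq\lambda\leq c/(\|\va\|_\infty K^2\log K)$ yields precisely the stated two-regime inequality with no $m$-dependent loss and no centering issue. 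Note also that the paper itself gives no proof of this theorem — it is quoted verbatim with a pointer to \cite{Jeong2019Bernstein} — so there is no internal argument to compare against; but the MGF route above is the natural way (and, in essence, the cited reference's way) to make your Step 1 do all the work.
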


We now state a theorem that controls the singular values of a random matrix $\mA$. The Theorem is exactly the same as Theorem 5.39 in \cite{vershynin10in} with the notable difference in the dependence of the constants to the sub-gaussian parameters. We use Theorem \ref{thm:yaniv_berns} to get this improved dependence.
\begin{theorem}\label{thm:singular_concen}
	Let $\mA$ be a $N\times n$ matrix whose rows $\va_i$ are independent sub-gaussian isotropic random vectors in $\R^n$. Then for every $t\geq 0$, with probability at least $1-2\exp(-ct^2)$ one has 
 \begin{align}
 	\sqrt{N} - C\sqrt{n} -t \leq s_{\min}(\mA)\leq s_{\max}(\mA)\leq \sqrt{N}+C\sqrt{n}+t.
 \end{align}
 Here $C = C_K = K\sqrt{\log K}\sqrt{\frac{\log 9}{c_1}}$, $c = c_K = \frac{c_1}{K^2\log K}>0$ with $c_1$ is an absolute constant and $K =\max_i\|\va_i\|_{\psi_2}$.
\end{theorem}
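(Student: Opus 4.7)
The plan is to run the standard Vershynin argument for the singular values of matrices with independent sub-gaussian isotropic rows (Theorem 5.39 of \cite{vershynin10in}) almost verbatim, but feed in Theorem \ref{thm:yaniv_berns} at the one point where Vershynin invokes the usual sub-exponential Bernstein inequality. This single replacement is exactly what buys the improved constant $C\asymp K\sqrt{\log K}$ in place of the textbook $C\asymp K^2$, because the exponent in Theorem \ref{thm:yaniv_berns} scales as $u^2/(K^2\log K)$ rather than as $u^2/K^4$.

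Concretely, I would first reduce the two-sided singular value inequality to the operator-norm estimate
\[
\left\|\tfrac{1}{N}\mA^\intercal\mA - \mI_n\right\| \leq \max(\delta,\delta^2), \qquad \delta = C\sqrt{n/N}+t/\sqrt{N},
\]
using $s_{\min/\max}(\mA)^2=\lambda_{\min/\max}(\mA^\intercal\mA)$ together with the elementary bound $|1-(1\pm\delta)^2|\leq \max(\delta,\delta^2)$; this step is purely algebraic and $K$-free. A $1/4$-net $\mathcal{N}\subset\mathbb{S}^{n-1}$ with $|\mathcal{N}|\leq 9^n$, together with the standard symmetric-operator approximation $\|\mM\|\leq 2\sup_{\vx\in\mathcal{N}}|\vx^\intercal\mM\vx|$, then reduces everything to the pointwise tail
\[
\mathbb{P}\!\left(\Big|\tfrac{1}{N}\|\mA\vx\|_2^2 - 1\Big|\geq \eta\right) \leq 2\exp\!\left(-\tfrac{c_1 N\eta^2}{K^2\log K}\right), \qquad \vx\in\mathbb{S}^{n-1},\ \eta\in(0,1].
\]

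I would establish this pointwise bound by applying Theorem \ref{thm:yaniv_berns} to $Z_i:=(\va_i\cdot\vx)^2-1$ with constant coefficients $a_i=1$. Isotropy yields $\mathbb{E}Z_i=0$ and $\mathbb{E}|Z_i|\leq 2$, while $\|\va_i\cdot\vx\|_{\psi_2}\leq K$ gives $\|Z_i\|_{\psi_1}\leq K_i^2$ with $K_i$ a constant multiple of $K$ and $\log K_i$ within a constant factor of $\log K$ for $K\geq 2$. In the regime $\eta\leq 1$ the quadratic branch of Bernstein dominates and produces exactly the displayed bound. Finally, a union bound over the $9^n$ net points with $\eta=\tfrac12\max(\delta,\delta^2)$ gives a failure probability of at most $2\exp\!\bigl(n\log 9 - c_1 C^2 n/(4K^2\log K) - c_1 t^2/(4K^2\log K)\bigr)$; the choice $C=K\sqrt{\log K}\sqrt{\log 9/c_1}$ is tailor-made to cancel the $n\log 9$ term exactly, leaving $2\exp(-ct^2)$ with $c=c_1/(K^2\log K)$ after absorbing the remaining numerical factor into $c_1$.

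The main obstacle is essentially bookkeeping rather than any new probabilistic input: one must check that the $\max(\delta,\delta^2)$ branch on the operator-norm side aligns with the corresponding $\min(\eta^2,\eta)$ branch of Theorem \ref{thm:yaniv_berns} in the right regime (so that the linear branch harmlessly handles $t\gtrsim\sqrt{N}$), and one must verify the two hypotheses of Theorem \ref{thm:yaniv_berns}, namely $\mathbb{E}|Z_i|\leq 2$ and a uniform sub-exponential norm $\|Z_i\|_{\psi_1}\leq K_i^2$ with $K_i\geq 2$, so that the improved Bernstein bound applies directly with $\log K_i$ replaceable by $\log K$ up to an absolute constant that can be folded into $c_1$.
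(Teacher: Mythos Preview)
Your proposal is correct and follows essentially the same approach as the paper: both run the Vershynin Theorem~5.39 covering argument (reduction to $\|\tfrac{1}{N}\mA^\intercal\mA-\mI\|\leq\max(\delta,\delta^2)$, a $1/4$-net of cardinality $\leq 9^n$, pointwise concentration for $\sum_i(\langle\va_i,\vx\rangle^2-1)$, then a union bound), with the only substantive change being the substitution of Theorem~\ref{thm:yaniv_berns} for the standard sub-exponential Bernstein inequality to obtain the $K\sqrt{\log K}$ dependence. Your identification of the constant $C=K\sqrt{\log K}\sqrt{\log 9/c_1}$ as the choice that cancels the $9^n$ net cardinality, and of $c=c_1/(K^2\log K)$, matches the paper exactly.
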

The proof structure of Theorem \ref{thm:singular_concen} is exactly the same as the proof of Theorem 5.39 in \cite{vershynin10in}, and so we provide the proof presented in \cite{vershynin10in} below. 
\begin{proof}
	The proof is a basic version of a covering argunemt, and it has three steps. We need to control $\|\mA\vx\|_2$ for all vectors on the unit sphere. To this end, we discretize the sphere using a $\mathcal{N}$ (the approximation step), establish a tight control of $\|\mA\vx\|_2$ for every fixed vector $\vx \in \mathcal{N}$ with high probability (the concentration step), and finish off by taking a union bound over all $\vx$ in the net. 
	
	{\bf Step 1: Approximation}. Using Lemma 5.36 in \cite{vershynin10in} for the matrix $\mB = \mA/\sqrt{N}$ we see that the conclusion of the theorem is equivalent to 
	\begin{align}\label{eq:versh1}
		\|\tfrac{1}{N}\mA^\intercal\mA - \mI\|\leq \max(\delta,\delta^2)=:\epsilon \text{ where } \delta = C\sqrt{\frac{n}{N}}+\frac{t}{\sqrt{N}}.
	\end{align}
	Using Lemma 5.34 in \cite{vershynin10in}, we can evaluate the operator norm in \eqref{eq:versh1} on a $\tfrac{1}{4}$-net $\mathcal{N}$ pf unit sphere $\mathcal{S}^{n-1}$:
	\begin{equation*}
		\|\frac{1}{N}\mA^\intercal\mA - \mI\|\leq2\max_{\vx\in\mathcal{N}}\left|\left<(\frac{1}{N}\mA^\intercal\mA-\mI)\vx,\vx\right>\right| = 2\max_{\vx\in\mathcal{N}}|\frac{1}{N}\|\mA\vx\|_2^2-1|.
	\end{equation*}
	So to complete the proof it suffices to show that, which high probability,
	\begin{equation}
		\max_{\vx\in\mathcal{N}}|\frac{1}{N}\|\mA\|_2^2-1|\leq\frac{\epsilon}{2}.
	\end{equation}
	By Lemma 5.2 in \cite{vershynin10in}, we can choose the net $\mathcal{N}$ so that it has cardinality $|\mathcal{N}|\leq9^n$.
	
	{\bf Step 2: Concentration} Let us fix any vector $\vx \in \mathcal{S}^{n-1}$. We can express $\|\mA\vx\|_2^2$ as a sum of independent random variablies
	\begin{equation}\label{eq:versh2}
		\|\mA\vx\|_2^2=\sum_{i=1}^N\langle\va_i,\vx\rangle =:\sum_{i=1}^N z_{i}^2
	\end{equation}
	where $\va_i$ denote the rows of the matrix $\mA$. By assumption, $z_i = \langle\va_i,\vx\rangle$ are independent sub-gaussian random variables with $\mathbb{E}z_i^2 = 1$ and $\|z_i\|_{\psi_2}\leq K$. Therefore, by Remark 5.18 and Lemma 5.14 in \cite{vershynin10in}, $z_i^2-1$ are independent centered sub-exponential random variables with $\|z_i^2-1\|_{\psi_1}\leq 2\|z_i^2\|_{\psi_1}\leq 4\|z_i\|_{\psi_2}^2 = 4K^2$.
	
	We can therefore use an exponential deviation inequality, Theorem \ref{thm:yaniv_berns}, to control the sum \eqref{eq:versh2}. 
	
	\begin{align*}
		\mathbb{P}\left\{|\frac{1}{N}\|\mA\|_2^2-1|\geq\frac{\epsilon}{2}\right\} &= \mathbb{P}\left\{|\frac{1}{N}\sum_{i=1}^Nz_i^2-1|\geq\frac{\epsilon}{2}\right\}\\&\leq 2\exp\left[-\tilde{c}_1\min\left(\frac{\epsilon^2N^2/4}{\sum_{i=1}^N4K_i^2\log 2K_i},\frac{\epsilon N/2}{4K\log 2K}\right) \right]\\ &\leq 2\exp\left[-\frac{\tilde{c}_1}{4K^2\log 2K}\min\left(\epsilon^2,\epsilon\right)N \right]\\&= 2\exp\left[-\frac{\tilde{c}_1}{4K^2\log 2K}\delta N \right]\\&\leq 2\exp\left[-\frac{c_1}{K^2\log K}\left(C^2n+t^2\right)\right],
	\end{align*}
	where the last inequality follows by the definition of $\delta$ and using the inequality $(a+b)^2 \geq a^2+b^2$ for $a,b\geq 0$.
	
	{\bf Step 3: Union bound.} Taking the union bound over all vectors $\vx$ in the net $\mathcal{N}$ of cardinality $|\mathcal{N}|\leq 9^n$, we obtain
	\begin{equation*}
		\mathbb{P}\left\{\max_{\vx\in\mathcal{N}}|\frac{1}{N}\|\mA\vx\|_2^2-1|\geq\frac{\epsilon}{2}\right\}\leq 9^n\cdot 2\exp[-\frac{c_1}{K^2\log K}\left(C^2n+t^2\right)]\leq 2\exp[-\frac{c_1}{K^2\log K}t^2],
	\end{equation*}
	where the second inequality follows for $C=C_K$ sufficiently large, e.g. $C = K\sqrt{\log K}\sqrt{\frac{\log 9}{c_1}}$.
\end{proof}
We now state a corollary of Theorem \ref{thm:singular_concen} that applies to general, non-isotropic sub-gaussian distribution.

\begin{cor}\label{cor:singular_noniso}
	Let $\mA$ be a $N\times n$ matrix whose rows $\va_i$ are independent sub-gaussian random vectors in $\R^n$ with second moment matrix $\mSigma$ . Then for every $t\geq 0$, with probability at least $1-2\exp(-ct^2)$ one has 
 \begin{align}
 	\|\tfrac{1}{N}\mA^\intercal\mA - \mSigma \|\leq \max(\delta,\delta^2) \text{ where } \delta = C\sqrt{\frac{n}{N}}+\frac{t}{\sqrt{N}}.
 \end{align}
 Here $C = C_K = K\sqrt{\log K}\sqrt{\frac{\log 9}{c_1}}$, $c = c_K = \frac{c_1}{K^2\log K}>0$ with $c_1$ is an absolute constant and $K =\max_i\|\va_i\|_{\psi_2}$.
\end{cor}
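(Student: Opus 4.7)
The plan is to repeat verbatim the three-step covering argument used to prove Theorem \ref{thm:singular_concen}, but track the non-identity second moment $\mSigma$ through every step. The key point is that the only place isotropy was used in the proof of Theorem \ref{thm:singular_concen} was to replace $\mathbb{E} z_i^2$ with $1$; for general $\mSigma$ this expectation becomes $\vx^\intercal \mSigma \vx$, and because we compare $\frac{1}{N}\|\mA\vx\|_2^2$ to this quantity rather than to $1$, the sub-exponential concentration bound from Theorem \ref{thm:yaniv_berns} applies with essentially no change.

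First, as in Step 1 of the proof of Theorem \ref{thm:singular_concen}, I would use Lemma 5.34 of \cite{vershynin10in} to pass from the operator norm to evaluation on a $\tfrac14$-net $\mathcal{N}$ of $\mathcal{S}^{n-1}$ with $|\mathcal{N}| \leq 9^n$, yielding
\begin{equation*}
\left\|\tfrac{1}{N}\mA^\intercal\mA - \mSigma\right\| \leq 2 \max_{\vx \in \mathcal{N}} \left| \tfrac{1}{N}\|\mA\vx\|_2^2 - \vx^\intercal \mSigma\vx \right|.
\end{equation*}
So it suffices to show that for each fixed $\vx \in \mathcal{S}^{n-1}$ the deviation $|\tfrac{1}{N}\|\mA\vx\|_2^2 - \vx^\intercal \mSigma\vx|$ is at most $\epsilon/2 = \max(\delta,\delta^2)/2$ with probability at least $1 - 2\exp[-2c_1 t^2/(K^2\log K)]$, say, and then absorb the $9^n$ loss from the union bound into $C = C_K$.

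Second, for the concentration step, fix $\vx \in \mathcal{S}^{n-1}$ and write $\|\mA\vx\|_2^2 = \sum_{i=1}^N z_i^2$ with $z_i = \langle \va_i, \vx \rangle$. By hypothesis the $z_i$ are independent sub-gaussian with $\|z_i\|_{\psi_2} \leq K$ and $\mathbb{E}z_i^2 = \vx^\intercal \mSigma \vx$, so by Remark 5.18 and Lemma 5.14 of \cite{vershynin10in} the centered variables $y_i := z_i^2 - \vx^\intercal \mSigma \vx$ are sub-exponential with $\|y_i\|_{\psi_1} \leq 4K^2$, and $\mathbb{E}|y_i| \leq 2\mathbb{E}z_i^2 \leq 2K^2$. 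After dividing through by $K^2$ (absorbing this constant rescaling into the absolute constant in the bound), the hypotheses of Theorem \ref{thm:yaniv_berns} are met with the vector $\va = (1,\dots,1)/N$. Applying that theorem exactly as in Step 2 of the proof of Theorem \ref{thm:singular_concen} gives
\begin{equation*}
\mathbb{P}\left\{\left|\tfrac{1}{N}\|\mA\vx\|_2^2 - \vx^\intercal\mSigma\vx\right| \geq \tfrac{\epsilon}{2}\right\} \leq 2\exp\!\left[-\tfrac{c_1}{K^2 \log K}\min(\epsilon^2,\epsilon)\,N\right] = 2\exp\!\left[-\tfrac{c_1}{K^2\log K}(C^2 n + t^2)\right]
\end{equation*}
after using $(a+b)^2 \geq a^2+b^2$ on $\delta N = (C\sqrt{n}+t)^2$.

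Third, I would take a union bound over the $9^n$ points of $\mathcal{N}$; choosing $C = C_K = K\sqrt{\log K}\sqrt{(\log 9)/c_1}$ makes the $9^n$ factor disappear into the $C^2 n$ term in the exponent, leaving probability at least $1 - 2\exp(-c t^2)$ with $c = c_1/(K^2 \log K)$, exactly the constants claimed.

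The only genuinely delicate point is the $\mathbb{E}|y_i|\leq 2$ normalization in the statement of Theorem \ref{thm:yaniv_berns}, since here $\mathbb{E}|y_i|$ scales with $K^2$ and with $\|\mSigma\|$. I expect to handle this by the rescaling $y_i \mapsto y_i/(\max(1,K^2,\|\mSigma\|))$, propagating the same factor through the $\|y_i\|_{\psi_1}$ bound so that it gets absorbed into the $K^2 \log K$ in the final exponent; no new ideas beyond bookkeeping are required, and the conclusion of the corollary then follows verbatim from the three-step template above.
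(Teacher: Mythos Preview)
The paper states this corollary without proof; it is presented immediately after Theorem \ref{thm:singular_concen} as a direct consequence, so the implicit ``paper's proof'' is precisely to rerun the three-step covering argument of Theorem \ref{thm:singular_concen} with $\mSigma$ in place of $\mI_n$. That is exactly what you do, and your Steps 1--3 are correct and match the template line by line.

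Your caveat about the normalization $\mathbb{E}|y_i|\leq 2$ in Theorem \ref{thm:yaniv_berns} is well taken, and it is worth being precise about what your proposed rescaling actually costs. If you divide $y_i$ by $M=\max(1,K^2,\|\mSigma\|)$, then the Bernstein ``$K$'' drops to an absolute constant and you recover a bound of the form $2\exp[-c\min(\epsilon^2/M^2,\epsilon/M)N]$; with $M$ of order $K^2$ this gives $c \sim c_1/K^4$ in the sub-Gaussian regime rather than the $c_1/(K^2\log K)$ claimed in the corollary. So the rescaling is not free bookkeeping: it degrades the constants. The cleanest way to get the constants exactly as stated is to note that in every invocation of this corollary in the paper (Lemmas \ref{lem:joint_WDC_trunc_pos} and \ref{lem:joing_WDC_trunc_neg}) the second-moment matrix satisfies $\|\mSigma\|=O(1)$, so that $\mathbb{E}|y_i|\leq 2\vx^\intercal\mSigma\vx$ is already bounded by an absolute constant and Theorem \ref{thm:yaniv_berns} applies directly with $K_i^2=4K^2$, yielding the stated $c=c_1/(K^2\log K)$ without any rescaling.
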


\end{document}